\newcommand{\indentalign}{\hspace{0.3in}&\hspace{-0.3in}}
\newcommand{\la}{\langle}
\newcommand{\ra}{\rangle}
\renewcommand{\Im}{\operatorname{Im}}
\newcommand{\sech}{\operatorname{sech}}
\newcommand{\defeq}{\stackrel{\rm{def}}{=}}
\newcommand{\supp}{\operatorname{supp}}
\newcommand{\core}{\textnormal{core}}
\newcommand{\rad}{\textnormal{rad}}
\newcommand{\sgn}{\operatorname{sgn}}
\newcommand{\lo}{\textnormal{lo}}
\newcommand{\med}{\textnormal{med}}
\newcommand{\hi}{\textnormal{hi}}
\newtheorem{theorem}{Theorem}[section]
\newtheorem{proposition}{Proposition}[section]
\newtheorem{lemma}[proposition]{Lemma}
\theoremstyle{remark}
\newtheorem{remark}[proposition]{Remark}
\numberwithin{equation}{section}
\title[$H^1$ blow-ups on a sphere for 3d quintic NLS]
{Blow-up solutions on a sphere for the 3d quintic NLS in the energy space}
\author{Justin Holmer}
\address{Brown University}
\author{Svetlana Roudenko}
\address{Arizona State University}
\begin{document}


\maketitle

\begin{abstract}
We prove that if $u(t)$ is a log-log blow-up solution, of the type studied by Merle-Rapha\"el \cite{MR}, to the $L^2$ critical focusing NLS equation $i\partial_t u +\Delta u + |u|^{4/d} u=0$ with initial data $u_0\in H^1(\mathbb{R}^d)$ in the cases $d=1, 2$, then $u(t)$ remains bounded in $H^1$ away from the blow-up point.  This is obtained without assuming that the initial data $u_0$ has any regularity beyond $H^1(\mathbb{R}^d)$.  As an application of the $d=1$ result, we construct an open subset of initial data in the radial energy space $H^1_{\rad}(\mathbb{R}^3)$ with corresponding solutions that blow-up on a sphere at positive radius for the 3d quintic ($\dot H^1$-critical) focusing NLS equation $i\partial_tu + \Delta u + |u|^4u=0$.   This improves Rapha\"el-Szeftel \cite{RS}, where an open subset in $H^3_{\rad}(\mathbb{R}^3)$ is obtained.
The method of proof can be summarized as follows: on the whole space, high frequencies above the blow-up scale are controlled by the bilinear Strichartz estimates.  On the other hand, outside the blow-up core, low frequencies are controlled by finite speed of propagation.
\end{abstract}

\section{Introduction}
\label{S:intro}

Consider the $L^2$ critical focusing nonlinear Schr\"odinger equation (NLS)
\begin{equation}
\label{E:NLS}
i\partial_t u + \Delta u + |u|^{4/d}u =0 \,,
\end{equation}
where $u=u(x,t)\in \mathbb{C}$ and $x\in \mathbb{R}^d$, in dimensions $d=1$ and $d=2$.  It is locally well-posed in $H^1(\mathbb{R}^d)$  and its solutions satisfy conservation of mass $M(u)$, momentum $P(u)$, and energy $E(u)$:
\begin{equation}
\label{E:cons}
M(u) = \|u\|_{L^2}^2 \,,  \quad P(u) = \Im \int \bar u \, \nabla u \, dx \,, \quad E(u) = \frac12 \|\nabla u \|_{L^2}^2 - \frac1{\frac{4}{d}+2} \|u\|_{L^{\frac{4}{d}+2}}^{\frac{4}{d}+2} \,,
\end{equation}
--see Tao \cite[Chap. 3]{Tao-book} and Cazenave \cite[Chap. 4]{Caz-book} for exposition and references.
The Galilean identity (see \cite[Exercise 2.5]{Tao-book}) transforms any solution to one with zero momentum, so there is no loss in considering only solutions $u(t)$ such that $P(u)=0$.

The unique (up to translation) minimal mass $H^1$ solution of
\begin{equation}
\label{E:ground-state}
-Q+ \Delta Q + |Q|^{4/d}Q=0 \,, \qquad Q=Q(x)
\end{equation}
is called the \emph{ground-state}.  It is smooth, radial, real-valued and positive, and exponentially decaying (see Tao \cite[Apx. B]{Tao-book}).  In the case $d=1$, we have explicitly
\begin{equation}
\label{E:ground-state-1d}
Q(x) = 3^{1/4}\sech^{1/2}(x) \,.
\end{equation}
Weinstein \cite{Wei} proved that solutions to \eqref{E:NLS} with $M(u)<M(Q)$ necessarily satisfy $E(u)>0$ and remain globally-in-time bounded in $H^1$ (do not blow-up in finite time).

Building upon the earlier heuristic and numerical result of Landman--Papanicolaou--Sulem--Sulem \cite{LPSS} and the first analytical result of Perelman \cite{Per},  Merle and Rapha\"el in a series of papers (see \cite{MR} and references therein) studied $H^1$ solutions to \eqref{E:NLS} such that
\begin{equation}
\label{E:MR-class}
E(u)<0 \,, \quad P(u)=0, \, \quad M(Q) < M(u) < M(Q) + \alpha^* \,,
\end{equation}
for some small absolute constant $\alpha^*>0$.  They showed that any such solution blows-up in finite time at the \emph{log-log rate} -- more precisely, they proved that there exists a \emph{threshold time} $T_0(u_0)>0$ and \emph{blow-up time} $T(u_0)>T_0(u_0)$ such that
\begin{equation}
\label{E:log-log-rate}
\|\nabla u (t)\|_{L_x^2} \sim \left( \frac{\log|\log(T-t)|}{T-t} \right)^{1/2} \,, \quad \text{for }T_0\leq t < T\,,
\end{equation}
where the implicit constant in \eqref{E:log-log-rate} is universal.
Moreover, if we take scale parameter $\lambda(t) = \|\nabla Q\|_{L^2}/\|\nabla u(t)\|_{L^2}$, then there exist parameters of position $x(t)\in \mathbb{R}^d$ and phase $\gamma(t) \in \mathbb{R}$ such that if we define the \emph{blow-up core}
\begin{equation}
\label{E:core}
u_{\core}(x,t) = \frac{e^{i\gamma(t)}}{\lambda(t)^{d/2}} Q\left( \frac{x-x(t)}{\lambda(t)} \right)  \,,
\end{equation}
and \emph{remainder} $\tilde u = u - u_{\core}$, then $\|\tilde u \|_{L^2} \leq \alpha_*$ and
\begin{equation}
\label{E:remainder-bounds}
\|\nabla \tilde u(t) \|_{L^2} \lesssim \left( \frac{1}{|\log(T-t)|^C(T-t)} \right)^{1/2}
\end{equation}
for some $C>1$.  There is, in addition, a well-defined \emph{blow-up point} $x_0 \defeq \lim_{t\nearrow T}x(t)$.  We refer to the region of space $\{ \, x\in \mathbb{R}^d \, | \, |x-x_0|>R \,\}$, for any fixed $R>0$, as the \emph{external} region.  While the Merle-Rapha\"el analysis accurately describes the activity of the solution in the blow-up core, the only information it directly yields about the external region is the bound \eqref{E:remainder-bounds}.

However, it is a consequence of the analysis in Rapha\"el \cite{Rap} that in the case $d=1$, $H^1$ solutions in the class \eqref{E:MR-class} have bounded $H^{1/2}$ norm in the external region all the way up to the blow-up time $T$.  In Holmer-Roudenko \cite{HR}, we extended this result to the case $d=2$.  Rapha\"el-Szeftel \cite{RS} established for $d=1$ that solutions with regularity $H^N$ for $N\geq 3$ satisfying \eqref{E:MR-class} remain bounded in the $H^{(N-1)/2}$-norm in the external region, and Zwiers \cite{Zwiers} extended this result to the case $d=2$.
These results leave open the possibility that there is a loss of roughly half the regularity
 in passing from the initial data to the solution in the external region at blow-up time.  The first main result of this paper is that such a loss \emph{does not occur}.  Specifically, we prove that $H^1$ solutions in the class \eqref{E:MR-class} remain bounded in the $H^1$-norm in the external region all the way up to the blow-up time, resolving an open problem posed in Rapha\"el-Szeftel \cite{RS} (Comment 1 on p. 976).

\begin{theorem}
\label{T:main}
Consider dimension $d=1$ or $d=2$.
Suppose that $u(t)$ is an $H^1$ solution to \eqref{E:NLS} in the Merle-Rapha\"el class \eqref{E:MR-class} (no higher regularity is assumed).  Let $T>0$ be the blow-up time and $x_0\in \mathbb{R}^d$ the blow-up point.  Then for any $R>0$,
$$\| \nabla u(t) \|_{L_{[0,T]}^\infty L^2_{|x-x_0|\geq R}} \leq C \,,$$
where $C$ depends\footnote{We did not see in the Merle-Rapha\"el papers the threshold time $T_0(u_0)$ or the blow-up time $T(u_0)$ estimated quantitatively in terms of properties of the initial data ($\|\nabla u_0\|_{L^2}$, $E(u_0)$, etc.).  If such dependence could be quantified, then the constant $C$ in Theorem \ref{T:main} could be quantified.}
 on $R$, $T_0(u_0)$, and $\|\nabla u_0\|_{L^2}$.
\end{theorem}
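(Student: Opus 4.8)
The plan is to split the solution into high and low frequencies relative to the blow-up scale $\lambda(t)^{-1}$, and to control each piece separately in the external region. Near the blow-up time $\lambda(t) \to 0$, so the "high-frequency" cutoff $N(t) \sim \lambda(t)^{-1}$ tends to infinity. The key dichotomy is: the portion of $\nabla u$ with frequencies $\gtrsim N(t)$ cannot be seen by finite speed of propagation arguments, but it is small in $L^2$ because the Merle--Rapha\"el bounds \eqref{E:remainder-bounds} show the only object living at such high frequencies is the (localized, concentrating) blow-up core $u_{\core}$, plus a remainder that is negligible in $\dot H^1$; so outside $|x - x_0| \geq R$ the core contributes essentially nothing. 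The portion with frequencies $\lesssim N(t)$ is controlled by propagating information from earlier times (where everything is fine, since $u \in C([0,T); H^1)$ and the solution is smooth away from blow-up on $[0, T_0]$, or at least bounded in $H^1$) forward using finite speed of propagation for the Schr\"odinger flow — which of course is not exact, but holds up to rapidly decaying tails via standard local smoothing / Duhamel truncation estimates.

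\textbf{Step 1 (Setup and frequency decomposition).} Fix $R > 0$ and set $R_0 = R/2$, say. Choose a smooth spatial cutoff $\chi$ with $\chi = 1$ on $|x - x_0| \geq R$ and $\chi = 0$ on $|x - x_0| \leq R_0$. Choose a time-dependent frequency threshold $N(t)$, growing like a small power of $\lambda(t)^{-1}$ (slightly below the blow-up scale), and write $u = u_{\lo} + u_{\hi}$ with $u_{\hi} = P_{\geq N(t)} u$. The goal is to bound $\|\nabla(\chi u_{\hi})\|_{L^2}$ and $\|\nabla(\chi u_{\lo})\|_{L^2}$ uniformly on $[0,T)$.

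\textbf{Step 2 (High frequencies via the core bound).} Write $u = u_{\core} + \tilde u$. Since $u_{\core}$ is a fixed Schwartz profile rescaled to width $\lambda(t)$ and centered at $x(t) \to x_0$, for $t$ close enough to $T$ the support-mass of $u_{\core}$ inside $\{|x - x_0| \geq R_0\}$ is $O(\lambda(t)^\infty)$ in any Sobolev norm; in particular $\|\nabla(\chi u_{\core})\|_{L^2} \to 0$. For the remainder, $\|\nabla \tilde u\|_{L^2}$ is not small — it is in fact $\approx \lambda(t)^{-1}$ — so this naive split fails for $\tilde u$, and here is where the \emph{bilinear Strichartz estimates} enter: one shows that the genuinely high-frequency part $P_{\geq N(t)}\tilde u$ carries only a small fraction of the $\dot H^1$ norm. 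Concretely, on each dyadic time interval $[t_k, t_{k+1}]$ on which $\lambda$ roughly halves, one runs a bilinear Strichartz estimate (interaction of high-frequency $P_{\geq N}u$ with the low-frequency bulk, which is bounded) to gain a power of $N(t)^{-1}\lambda(t)^{-1}$ or similar, and sums the geometric series over $k$ up to $T$ — the log-log rate makes this sum convergent. The outcome: $\|P_{\geq N(t)} u\|_{L^\infty_t \dot H^1}$ in the external region is bounded by a constant depending only on the data and on $\|\nabla u_0\|_{L^2}$, $T_0$.

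\textbf{Step 3 (Low frequencies via finite speed of propagation).} For $u_{\lo} = P_{\leq N(t)} u$, the point is that on $[0, T_0]$ the solution is uniformly bounded in $H^1$ by Merle--Rapha\"el (the blow-up regime only starts at $T_0$), hence $\|\nabla(\chi' u(t))\|_{L^2} \leq C$ for $t \leq T_0$ and any cutoff $\chi'$. For $t \in [T_0, T)$, I use a Duhamel formula with the cutoff $\chi$ and exploit that the linear Schr\"odinger propagator applied to data supported away from $|x - x_0| \leq R_0$, and then localized to frequencies $\leq N(t)$ over a time interval of length $T - T_0$ (which is \emph{fixed}, not shrinking), moves mass at speed $\lesssim N(t)$; the tail leaking into the core region is $O((\mathrm{dist} - N(t)\cdot(T-t))^{-\infty})$ via standard stationary-phase / non-stationary-phase kernel bounds, provided $N(t)(T-t) \ll R_0$, which one arranges by the choice of $N(t)$. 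The nonlinear term is handled by Strichartz on the external region (the nonlinearity is subcritical there since $u$ stays bounded in $L^2$ globally and bounded in $H^1$ in the external region by bootstrap), closing a continuity argument.

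\textbf{Main obstacle.} The crux is Step 2: making the bilinear Strichartz estimate genuinely \emph{win} against the blow-up rate. One needs to verify that the high-frequency piece $P_{\geq N(t)} u$ interacts with the rest of the solution in a way that produces a gain summable over the infinitely many dyadic scales accumulating at $T$; this is delicate because the $L^2$ mass of the core is \emph{not} small (it is $\gtrsim \|Q\|_{L^2}$), so the bilinear gain must come entirely from the frequency separation and from the localization of the core, not from smallness of mass. Coordinating the choice of $N(t)$ so that it is simultaneously (i) high enough that Step 2's geometric series converges, and (ii) low enough that Step 3's finite-speed tail estimate $N(t)(T-t) \ll R_0$ holds, is the technical heart of the argument; I expect $N(t) \sim \lambda(t)^{-1+\delta}$ or a log-corrected version thereof to be the right choice, with $\delta > 0$ small.
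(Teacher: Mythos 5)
Your high-level decomposition matches the paper's: high frequencies above the blow-up scale are controlled globally in space via Strichartz/bilinear-Strichartz local-theory arguments, and low frequencies in the external region via a finite-speed-of-propagation mechanism. That said, there are two concrete gaps.

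\textbf{The threshold is off (or at best ambiguous).} You write $N(t)\sim\lambda(t)^{-1+\delta}$, which if taken literally is \emph{below} the blow-up frequency $\lambda(t)^{-1}$; then $P_{\geq N(t)}u$ still contains the concentrating core and Step~2 cannot start. The correct choice sits strictly \emph{between} the blow-up scale $(T-t)^{-1/2}$ and the finite-speed-of-propagation scale $(T-t)^{-1}$ — the paper uses $(T-t)^{-3/4}$. This is forced by exactly the two competing constraints you name: high enough that the initial bound $\|P_{\geq N}u\|_{L^2}\lesssim N^{-1}\|\nabla u\|_{L^2}$ is nontrivially small, and low enough that a frequency-$N$ wave packet cannot cross from the core to $\{|x-x_0|\geq R\}$ before $T$.

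\textbf{Step~3 as written is circular, and a necessary intermediate lemma is missing.} You close the low-frequency argument by saying the nonlinearity is ``subcritical there since $u$ stays bounded in $H^1$ in the external region by bootstrap.'' But $H^1$ boundedness in the external region is the theorem; it cannot be assumed. The paper avoids this by first proving an \emph{unconditional} small gain of regularity in the external region (Lemmas~\ref{L:a-little-1}, \ref{L:a-little-2}: roughly $H^{2/5}$ in $d=1$, $H^{1/2}$ in $d=2$), using only local smoothing for the commutator term $\partial_x(\psi'u)$ together with the Merle--Rapha\"el remainder bound \eqref{E:remainder-bounds2}, which makes the nonlinear coefficient $\int\|\nabla q\|_{L^2}^2\,dt$ integrable in time. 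This seed regularity is then fed into a recurrence (Lemma~\ref{L:low-freq-it}) that gains $1/8$ of a derivative per iteration — at the cost of enlarging the external region — by inserting the time-dependent projection $P_{\leq(T-t)^{-3/4}}$ into the localized equation and trading $\alpha$ derivatives for a factor $(T-t)^{-3\alpha/4}$ in the inhomogeneous estimates; after eight iterations on nested annuli one reaches $H^1$. Your ``stationary-phase kernel tail'' version of finite speed of propagation is morally the same device, but without the intermediate local-smoothing lemma there is no seed to start the iteration, and without the inflating sequence of cutoffs there is no way to pay the derivative loss from $\partial_x(\psi'u)$ more than once. Supplying that lemma and organizing Step~3 as an explicit recurrence in $s$ is what turns your sketch into a proof.

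Finally, a minor point: your Step~2 phrasing (``$\|P_{\geq N(t)}u\|_{\dot H^1}$ in the external region'') suggests a spatial restriction, but the high-frequency bound is — and should be — proved \emph{globally} in $x$ (Prop.~\ref{P:high-freq}); the localization of $u_{\mathrm{core}}$ plays no role there, only the frequency information from \eqref{E:log-log-rate} does.
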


We remark that $H^1$, the \emph{energy space}, is a natural space in which to study the equation \eqref{E:NLS} since the conservation laws \eqref{E:cons} are defined and Lyapunov-Hamiltonian type methods, such as those used by Merle-Rapha\"el in their blow-up theory, naturally yield coercivity on $H^1$ quantities.   

The retention of regularity in the external region has applications to the construction of new blow-up solutions, with special geometry, for $L^2$ supercritical NLS equations.  Using their partial regularity methods,  Rapha\"el \cite{Rap} and Rapha\"el-Szeftel \cite{RS}
constructed spherically symmetric finite-time blow-up solutions to the quintic NLS
\begin{equation}
\label{E:NLS-5}
i\partial_t u + \Delta u + |u|^4u=0
\end{equation}
in dimension $d\geq 2$ that contract toward a sphere $|x|=r_0\sim 1$ following the one-dimensional quintic blow-up dynamics \eqref{E:log-log-rate}\eqref{E:core} in the radial variable near $r=r_0$.  Specifically, they showed there exists an open subset of initial data in some radial function class with corresponding solutions adhering to the above--described blow--up dynamics.  In \cite{Rap}, for $d=2$, an open subset of initial data in the radial energy space $H^1_{\rad}(\mathbb{R}^2)$ was obtained.
For $d=3$, in which case \eqref{E:NLS-5} is $\dot H^1$ critical, \cite{RS} obtained an open subset of initial data in a comparably ``thin'' subset $H^3_{\rad}(\mathbb{R}^3)$ of the radial energy space $H^1_\rad(\mathbb{R}^3)$.

As an application of the techniques used to prove Theorem \ref{T:main}, we prove, for $d=3$, the existence of an open subset of initial data in the full radial energy space $H^1_{\rad}(\mathbb{R}^3)$.  For the statement, take $Q$ to be the solution to \eqref{E:ground-state} in the case $d=1$, explicitly given by \eqref{E:ground-state-1d}.  The following theorem follows the motif of the $d=3$ case of Theorem 1 in \cite{RS} except that $\mathcal{P}$, the initial data, is an open subset of $H_\rad^1(\mathbb{R}^3)$ rather than $H^3_\rad(\mathbb{R}^3)$.

\begin{theorem}
\label{T:main-2}
There exists an open subset $\mathcal{P}\subset H^1_{\rad}(\mathbb{R}^3)$ such that the following holds true.  Let $u_0\in \mathcal{P}$ and let $u(t)$ denote the corresponding solution to \eqref{E:NLS-5} in the case $d=3$.  Then there exist a blow-up time $0<T<+\infty$ and parameters of scale $\lambda(t)>0$, radial position $r(t)>0$, and phase $\gamma(t)\in \mathbb{R}$ such that if we take
$$
u_\core (t,r) \defeq \frac{1}{\lambda(t)^{1/2}} Q\left( \frac{r-r(t)}{\lambda(t)} \right) e^{i\gamma(t)}
$$
and the remainder $\tilde u(t) \defeq u(t)-u_\core(t)$, then the following hold
\begin{enumerate}
\item The remainder converges in $L^2$:  $\tilde u (t) \to u^*$ in $L^2(\mathbb{R}^3)$ as $t\nearrow T$.
\item The position of the singular sphere converges: $r(t) \to r_0>0$ as $t\nearrow T$.
\item The solution contracts toward the sphere at the log--log rate:
$$\lambda(t) \left( \frac{\log|\log(T-t)|}{T-t} \right)^{1/2} \to \frac{\sqrt{2\pi}}{\|Q\|_{L^2}} \text{ as }t\nearrow T\,.$$
\item The solution remains $H^1$-small away from the singular sphere:  For each $R>0$, $\|u(t)\|_{H^1_{|r-r(T)|\geq R}(\mathbb{R}^3)} \leq \epsilon$.
\end{enumerate}
\end{theorem}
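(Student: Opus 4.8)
The plan is to reduce Theorem~\ref{T:main-2} to a one-dimensional blow-up problem in the radial variable, then to use Theorem~\ref{T:main} together with finite speed of propagation to control the full three-dimensional solution away from the singular sphere. First I would set $r = |x|$ and write the three-dimensional equation for radial functions in terms of $v(t,r) = r\, u(t,r)$. Conjugating by this weight removes the first-order term: if $u$ solves \eqref{E:NLS-5} with $d=3$, then $v$ solves $i\partial_t v + \partial_r^2 v + r^{-4}|v|^4 v = 0$ on $r > 0$ with the Dirichlet condition $v(t,0)=0$, which near $r = r_0 > 0$ is a small perturbation of the 1d quintic NLS $i\partial_t v + \partial_r^2 v + r_0^{-4}|v|^4 v = 0$. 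The point is that, after a fixed rescaling to normalize the coefficient $r_0^{-4}$, the 1d equation is exactly $L^2$-critical, so the Merle-Rapha\"el blow-up theory and its attendant log-log dynamics apply in a neighborhood of $r=r_0$; this is precisely the mechanism exploited in \cite{Rap,RS}.

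Next I would construct the open set $\mathcal{P}$. Following the $d=3$ construction in \cite{RS}, one takes initial data of the form $u_0(r) \approx \lambda_0^{-1/2} Q((r-r_0)/\lambda_0) e^{i\gamma_0} r^{-1}$ (suitably cut off away from $r=r_0$ and smoothed near the origin) plus a small $H^1$ perturbation, and one shows that the modulation-theoretic bootstrap of Merle-Rapha\"el is stable under such perturbations. The key improvement over \cite{RS} is that the bootstrap only needs to be closed at the $H^1$ level in the external region $|r - r(t)| \geq R$, rather than at the $H^3$ level. This is exactly where Theorem~\ref{T:main} enters: it guarantees that the 1d log-log solution $v(t,r)$, restricted to $|r - r_0| \geq R$, stays bounded in $H^1$ uniformly up to the blow-up time, which in turn controls the three-dimensional $u(t) = v(t)/r$ in $H^1_{|r-r_0|\geq R}(\mathbb{R}^3)$ (dividing by $r \geq r_0 - R$ is harmless). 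The remaining properties (1)-(3)---$L^2$-convergence of the remainder, convergence of $r(t)$ to $r_0$, and the log-log rate with the stated constant---are then read off from the Merle-Rapha\"el description of the 1d core dynamics, with the factor $\sqrt{2\pi}$ coming from the Jacobian $4\pi r^2 \, dr$ relating the 3d and 1d $L^2$ norms near $r = r_0$.

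The main obstacle is handling the error between the genuine 3d radial equation and the model 1d equation: the terms $\partial_r^2 v$ versus the full Laplacian, the variable coefficient $r^{-4}$ versus its value at $r_0$, and the behavior near the origin $r=0$ all produce perturbative terms that must be shown not to disrupt the log-log regime. In \cite{Rap,RS} this is done by a localization argument: one works with $v$ multiplied by a cutoff supported near $r=r_0$, uses finite speed of propagation (at the relevant frequencies) to argue that the influence of the region $r \ll r_0$ and $r \gg r_0$ on the core is negligible on the blow-up time scale, and absorbs the variable-coefficient errors into the modulation equations. The subtlety in the energy-space setting is that one no longer has extra derivatives to spare when estimating these error terms; instead one must use the high-low frequency decomposition underlying Theorem~\ref{T:main}---bilinear Strichartz control of high frequencies above the blow-up scale, and finite speed of propagation for low frequencies outside the core---to close the argument with only $H^1$ regularity. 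I expect this adaptation, rather than the modulation theory or the extraction of the asymptotic parameters, to be the technical heart of the proof.
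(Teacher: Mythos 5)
Your overall plan---conjugate to $v=ru$, reduce the core dynamics near $r=r_0$ to the 1d $L^2$-critical Merle--Rapha\"el log-log regime, and re-run the high-frequency bilinear Strichartz and low-frequency finite-speed machinery of \S\ref{S:high-frequency}--\ref{S:finite-speed} to close a bootstrap at the $H^1$ level---is the correct route and matches what the paper does in \S\ref{S:3d-sphere} around Prop.~\ref{P:bootstrap}. However, the step where you claim Theorem~\ref{T:main} ``guarantees that the 1d log-log solution $v(t,r)$, restricted to $|r-r_0|\geq R$, stays bounded in $H^1$'' is a genuine gap, for two reasons. First, $v$ does not solve the $L^2$-critical model \eqref{E:NLS} on $\mathbb{R}$: it solves $i\partial_t v+\partial_r^2 v=-r^{-4}|v|^4v$ with a singular variable coefficient and an odd extension across $r=0$, so Theorem~\ref{T:main} cannot be invoked as a black box. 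The paper must re-prove every ingredient (Lemmas~\ref{L:ring-local}, \ref{L:iterator-ring}, \ref{L:a-little}, \ref{L:low-freq-iterator}, Props.~\ref{P:high-freq-ring}--\ref{P:ring-Strichartz}) with the nonlinearity $r^{-4}|v|^4v$, the 3d radial Strichartz estimates, and the Hardy/Sobolev conversion lemmas (Lemmas~\ref{L:radial-Strichartz}, \ref{L:conversion}) in place of the flat 1d model.

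Second, and more fundamentally, the external region $|r-r(T)|\geq R$ in conclusion~(4) \emph{contains a neighborhood of the origin $r=0$}, where the 3d quintic NLS is $\dot H^1$-critical; no 1d $L^2$-critical theory, Theorem~\ref{T:main} included, controls $\|\nabla u\|_{L^2}$ there. Your parenthetical ``dividing by $r\geq r_0-R$ is harmless'' shows you are only looking at the outer annulus and overlooking the piece $0\leq r\leq r_0-R$, where $u=v/r$ is not controlled by any bound on $v$ alone. This is exactly why the paper carries the interior Strichartz bootstrap hypothesis \eqref{BSI-3}, $\|\la\nabla\ra u\|_{L^5_{[0,T']}L^{30/11}_{|x|\leq 1/2}}\leq\epsilon$, as a standing assumption that is fed through every recurrence step (see, e.g., the treatment of $g_1$ in Lemma~\ref{L:iterator-ring} and of $F_{21}$ in Lemma~\ref{L:low-freq-iterator}); the whole point of Prop.~\ref{P:bootstrap} is to reproduce and improve this interior $\dot H^1$-level control. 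Your proposal relegates the origin to a ``perturbative term'' affecting the core dynamics and so never establishes the near-origin bound that conclusion~(4) actually requires. (For the remainder of the bootstrap---reinforcing \eqref{BSI-1} via modulation theory---your deferral to \cite{Rap,RS} agrees with the paper, which likewise omits that step.)
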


The 3d quintic NLS equation \eqref{E:NLS-5} is energy-critical, and the global well-posedness and scattering problem is one of several critical regularity problems that has received a lot of attention in the last decade \cite{Bou-2,CKSTT,KM}.  The global well-posedness for small data in $\dot H^1$ is classical and follows from the Strichartz estimates.  Our Theorem \ref{T:main-2} takes a large, but special ``prefabricated'' approximate blow-up solution, and installs it near radius $r=1$ on top of a small global $H^1$ background. The main difficulty, of course, is showing that the two different components -- the blow-up portion on the one hand, and the evolution of the small $\dot H^1$ background on the other, have limited interaction and can effectively evolve separately.  Thus, it is not surprising that the techniques to prove Theorem \ref{T:main} are relevant to this analysis.

We now outline the method used to prove Theorem \ref{T:main}.  We start with a given blow-up solution $u(t)$ in the Merle-Rapha\"el class, and by scaling and shifting this solution, it suffices to assume that the blow-up point is $x_0=0$ and the blow-up time is $T=1$, and moreover, \eqref{E:log-log-rate} holds over times $0\leq t< 1$.   Since \eqref{E:NLS} is $L^2$ critical, the size of the $L^2$ norm is highly relevant.  By mass conservation, we know that $\|P_N u(t)\|_{L_x^2} \lesssim 1$ for all $N$ and all $0\leq t<1$, where $P_N$ denotes the Littlewood-Paley frequency projection.  However, \eqref{E:log-log-rate} shows that for $N \gg (1-t)^{-(1+\delta)/2}$, we have $\|P_N u (t) \|_{L_x^2} \lesssim N^{-1}(1-t)^{-(1+\delta)/2}$, which is a better estimate for these large frequencies $N$.  In \S 3, we show that this smallness of high frequencies reinforces itself and ultimately proves that for $N\gg (1-t)^{-(1+\delta)/2}$, the solution is $H^1$ bounded.  This is achieved using dispersive estimates typically employed in local well-posedness arguments -- the Strichartz and Bourgain's bilinear Strichartz estimates -- after the equation has been restricted to high frequencies.
We note that this improvement of regularity at high frequencies is proved \emph{globally in space}.

For the Schr\"odinger equation, frequencies of size $N$ propagate at speed $N$, and thus, travel a distance $O(1)$ over a time $N^{-1}$.  Therefore, at time $t<1$, a component of the solution in the blow-up core at frequency $N$ will effectively only make it out of the blow-up core and into the external region before the blow-up time provided  $N\gtrsim (1-t)^{-1}$.  Thus, we expect that the blow-up action, which is taking place at frequency $\sim (1-t)^{-1/2}\log|\log(1-t)| \ll (1-t)^{-1}$, will not be able to exit the blow-up core before blow-up time.  This is the philosophy behind the analysis in \S \ref{S:finite-speed}.  Recall that in \S \ref{S:high-frequency}, we have controlled the solution at frequencies above $(1-t)^{-(1+\delta)/2}$.  In \S \ref{S:finite-speed}, we apply a spatial localization to the external region, and then look to control the remaining low frequencies, i.e., those frequencies below $(1-t)^{-(1+\delta)/2}$.  We examine the equation solved by $P_{\leq (1-t)^{-3/4}} \psi u(t)$, where $\psi$ is a spatial restriction to the external region. In estimating the inhomogeneous terms, we can make use of the frequency restriction to exchange $\alpha$-spatial derivatives for a time factor $(1-t)^{-3\alpha /4}$.  This enables us to prove a low-frequency recurrence:  the $H^s$ size of the solution in the external region is bounded by the $H^{s-\frac18}$ size of the solution in a slightly larger external region.  Iteration gives the $H^1$ boundedness.

The structure of the paper is as follows.
Preliminaries on the Strichartz and bilinear Strichartz estimates appear in \S\ref{S:estimates}.  The proof of Theorem \ref{T:main} is carried out in \S\ref{S:high-frequency}-\ref{S:finite-speed}.  The proof of Theorem \ref{T:main-2} is carried out in \S\ref{S:3d-sphere}.

\subsection{Acknowledgements}
J.H. is partially supported by a Sloan fellowship and NSF grant DMS-0901582.
S.R. is partially supported by NSF grant DMS-0808081. J.H. thanks Mike Christ and Daniel Tataru for patient mentorship, in work related to the paper \cite{KT}, on the use of the bilinear Strichartz estimates.

\section{Standard estimates}
\label{S:estimates}

All of the estimates outlined in this section are now classical and well-known.  Let $P_N$, $P_{\leq N}$, $P_{\geq N}$ denote the Littlewood-Paley frequency projections.

We say that $(q,p)$ is an \emph{admissible} pair if $2\leq p \leq \infty$ and
$$
\frac{2}{q}+\frac{d}{p}=\frac{d}{2},
$$
excluding the case $d=2$, $q=2$, $p=\infty$.

\begin{lemma}[Strichartz estimate]
\label{L:Strichartz}
If $(q,p)$ is an admissible pair, then
$$
\| e^{it\Delta} \phi \|_{L_t^qL_x^p} \lesssim \|\phi\|_{L_x^2}.
$$
\end{lemma}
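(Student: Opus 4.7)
The plan is to prove Lemma \ref{L:Strichartz} via the classical $TT^*$ method: combine the pointwise-in-time dispersive decay for the free Schr\"odinger propagator with Hardy-Littlewood-Sobolev fractional integration in the time variable, and handle the $L^2_t$ endpoint (when present) via the Keel-Tao atomic decomposition.

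First I would establish the pointwise dispersive estimate. From the explicit convolution representation
$$
e^{it\Delta}\phi(x) = (4\pi i t)^{-d/2} \int_{\mathbb{R}^d} e^{i|x-y|^2/(4t)} \phi(y)\,dy,
$$
one reads off $\|e^{it\Delta}\phi\|_{L^\infty_x} \lesssim |t|^{-d/2}\|\phi\|_{L^1_x}$. Combined with the $L^2$-unitarity $\|e^{it\Delta}\phi\|_{L^2_x}=\|\phi\|_{L^2_x}$ from Plancherel, Riesz-Thorin interpolation yields, for each $2\leq p\leq \infty$,
$$
\|e^{it\Delta}\phi\|_{L^p_x} \lesssim |t|^{-d(\frac{1}{2}-\frac{1}{p})}\|\phi\|_{L^{p'}_x}.
$$

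Next I would dualize. Writing $T\phi \defeq e^{it\Delta}\phi$, the bound $T:L^2_x\to L^q_t L^p_x$ is equivalent by the $TT^*$ identity to the operator
$$
(TT^*F)(t) = \int e^{i(t-s)\Delta}F(s)\,ds
$$
mapping $L^{q'}_t L^{p'}_x$ into $L^q_t L^p_x$. Applying the dispersive estimate pointwise in $s$ and Minkowski's inequality reduces matters to
$$
\Bigl\|\int |t-s|^{-2/q}\,\|F(s)\|_{L^{p'}_x}\,ds\Bigr\|_{L^q_t} \lesssim \|F\|_{L^{q'}_t L^{p'}_x},
$$
where the admissibility relation $d(\tfrac{1}{2}-\tfrac{1}{p})=\tfrac{2}{q}$ identifies the kernel. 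For $2<q<\infty$, this is precisely the Hardy-Littlewood-Sobolev fractional integration bound in the (one-dimensional) time variable, while the case $q=\infty$, $p=2$ is trivial from $L^2$-unitarity.

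The main obstacle is the endpoint $q=2$. For $d=1$, admissibility forces $q\geq 4$, so no such endpoint arises; for $d=2$, the only $q=2$ admissible pair is $(q,p)=(2,\infty)$, which is explicitly excluded in the statement. For $d\geq 3$, however, the genuine endpoint $(q,p)=(2,\tfrac{2d}{d-2})$ is admissible and Hardy-Littlewood-Sobolev fails at the critical exponent. There I would invoke the Keel-Tao atomic argument: decompose the $s$-integral dyadically into pieces where $|t-s|\sim 2^j$, apply the dispersive estimate on each dyadic scale, and use bilinear real interpolation against the already-established non-endpoint Strichartz pairs to sum the dyadic contributions and recover the endpoint bound.
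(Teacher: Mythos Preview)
Your proposal is correct and follows the classical route: dispersive decay plus $TT^*$ and Hardy--Littlewood--Sobolev for the non-endpoint range, together with the Keel--Tao bilinear/atomic argument at the $q=2$ endpoint in $d\geq 3$. This is precisely the content of the references the paper cites.

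The paper itself gives no proof at all: its ``proof'' of Lemma~\ref{L:Strichartz} consists solely of the citation ``See Strichartz \cite{Str} and Keel--Tao \cite{Keel-Tao}.'' So there is nothing to compare your argument against beyond noting that you have accurately reproduced the strategy of those cited works. One minor remark: since the paper only uses the estimate in dimensions $d=1,2$ (and, via Lemma~\ref{L:radial-Strichartz}, the $d=3$ case), and since the $(2,\infty)$ pair is excluded in $d=2$, the Keel--Tao endpoint machinery is not strictly needed for the applications in this paper---the non-endpoint Hardy--Littlewood--Sobolev argument would suffice except for the $d=3$ endpoint $(2,6)$ used implicitly in the radial Strichartz lemma.
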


\begin{proof}
See Strichartz \cite{Str} and Keel-Tao \cite{Keel-Tao}.
\end{proof}

\begin{lemma}[Bourgain bilinear Strichartz estimate]
\label{L:bilinear-Strichartz}
Suppose that $N_1 \ll N_2$.  Then
\begin{equation}
\label{E:Bourgain1}
\| P_{N_1} e^{it\Delta}\phi_1 \; P_{N_2}e^{it\Delta}\phi_2 \|_{L_t^2L_x^2} \lesssim \left(\frac{N_1^{d-1}}{N_2}\right)^{1/2} \|\phi_1\|_{L_x^2}\|\phi_2\|_{L_x^2},
\end{equation}
\begin{equation}
\label{E:Bourgain2}
\| P_{N_1} e^{it\Delta}\phi_1 \; \overline{P_{N_2}e^{it\Delta}\phi_2} \|_{L_t^2L_x^2} \lesssim \left(\frac{N_1^{d-1}}{N_2}\right)^{1/2} \|\phi_1\|_{L_x^2}\|\phi_2\|_{L_x^2}.
\end{equation}
\end{lemma}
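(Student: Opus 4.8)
The plan is to prove \eqref{E:Bourgain1}--\eqref{E:Bourgain2} by the standard Fourier-analytic argument of Bourgain, which converts the bilinear bound into a transversality statement about two well-separated dyadic pieces of the paraboloid $\{\tau=-|\xi|^2\}$. Since $P_{N_j}e^{it\Delta}\phi_j=e^{it\Delta}P_{N_j}\phi_j$, I would first replace $\phi_j$ by $P_{N_j}\phi_j$ and hence assume $\widehat{\phi_j}$ is supported in $|\xi|\sim N_j$; I interpret $N_1\ll N_2$ as $N_1\le cN_2$ for a small absolute constant $c$, and abbreviate $u_j=e^{it\Delta}\phi_j$. Expanding $u_1u_2$ as a double Fourier integral and taking the spacetime Fourier transform (in $(x,t)\in\mathbb{R}^{d+1}$, with $\tau$ dual to $t$) gives, for $w\defeq u_1u_2$,
$$
\widehat{w}(\xi,\tau)=c\int_{\mathbb{R}^d}\widehat{\phi_1}(\xi_1)\,\widehat{\phi_2}(\xi-\xi_1)\,\delta\bigl(\tau+\Phi(\xi_1)\bigr)\,d\xi_1,\qquad \Phi(\xi_1)\defeq|\xi_1|^2+|\xi-\xi_1|^2,
$$
which by the coarea formula is a surface integral of $\widehat{\phi_1}(\xi_1)\widehat{\phi_2}(\xi-\xi_1)/|\nabla_{\xi_1}\Phi(\xi_1)|$ over $S_{\xi,\tau}\defeq\{\xi_1:\Phi(\xi_1)=-\tau\}$. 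By Plancherel, $\|w\|_{L^2_{x,t}}^2\lesssim\|\widehat{w}\|_{L^2_{\xi,\tau}}^2$.

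Next, for each fixed $(\xi,\tau)$ I would apply the Cauchy--Schwarz inequality on $S_{\xi,\tau}$ to split
$$
|\widehat{w}(\xi,\tau)|^2\lesssim\Bigl(\int_{S_{\xi,\tau}\cap\{|\xi_1|\sim N_1\}}\frac{d\sigma(\xi_1)}{|\nabla\Phi(\xi_1)|}\Bigr)\Bigl(\int_{S_{\xi,\tau}}\frac{|\widehat{\phi_1}(\xi_1)|^2\,|\widehat{\phi_2}(\xi-\xi_1)|^2}{|\nabla\Phi(\xi_1)|}\,d\sigma(\xi_1)\Bigr).
$$
The first factor carries the gain. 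On the relevant region, $|\xi-\xi_1|\sim N_2\gg N_1\sim|\xi_1|$ forces $|\xi|\sim N_2$, hence $|\nabla\Phi(\xi_1)|=|4\xi_1-2\xi|\sim N_2$; moreover $S_{\xi,\tau}$ is a sphere centered at $\xi/2$ (with $|\xi/2|\sim N_2$), so its portion lying within distance $O(N_1)$ of the origin is $C^1$-close to a hyperplane and has $(d-1)$-dimensional measure $\lesssim N_1^{d-1}$ (and is empty unless the sphere's radius is $\sim N_2$). Thus the first factor is $\lesssim N_1^{d-1}/N_2$, uniformly in $(\xi,\tau)$. Pulling this constant out and integrating the second factor back in $\tau$ (reversing the coarea formula) and then in $\xi$ (Fubini), followed by Plancherel, returns $\|\phi_1\|_{L^2}^2\|\phi_2\|_{L^2}^2$, which yields \eqref{E:Bourgain1}.

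The conjugated estimate \eqref{E:Bourgain2} follows by the same computation with $w=u_1\overline{u_2}$; the only change is that the relevant phase is now $\Phi(\xi_1)=|\xi_1|^2-|\xi_1-\xi|^2=2\xi\cdot\xi_1-|\xi|^2$, which is \emph{affine} in $\xi_1$, so $S_{\xi,\tau}$ is an affine hyperplane with normal $\propto\xi$; again $|\nabla\Phi|=2|\xi|\sim N_2$ on the support, and the slab of that hyperplane inside $\{|\xi_1|\sim N_1\}$ has $(d-1)$-measure $\lesssim N_1^{d-1}$, so the same two inputs feed the same Cauchy--Schwarz argument. I expect the only real obstacle to be the geometric estimate for the first factor --- that is, justifying rigorously, via the coarea formula and elementary spherical/affine geometry, both the transversality lower bound $|\nabla\Phi|\gtrsim N_2$ and the surface-measure bound $\sigma\bigl(S_{\xi,\tau}\cap\{|\xi_1|\sim N_1\}\bigr)\lesssim N_1^{d-1}$ (with the degenerate cases, such as $d=1$, where $S_{\xi,\tau}$ is a set of at most two points and $N_1^{d-1}=1$, checked directly); the remaining steps are routine manipulations with Plancherel and Fubini.
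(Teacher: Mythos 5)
Your proposal is correct and follows essentially the same Fourier-analytic transversality argument as the paper: compute the spacetime Fourier transform of the product, integrate out the delta on the characteristic surface, apply Cauchy--Schwarz on the level set, and use the lower bound $|\nabla\Phi|\sim N_2$ together with the measure bound $\lesssim N_1^{d-1}$ on the relevant slice. The only difference is that the paper explicitly writes out only the $d=1$ case (where the level set is at most two points and the change of variables $(\tau,\xi)\to(\xi_1,\xi_2)$ plays the role of your coarea/Cauchy--Schwarz step) and defers higher $d$ to the literature, whereas you carry out the general-$d$ cap/hyperplane measure estimate directly; this is a harmless and welcome bit of extra detail, not a different method.
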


\begin{proof}
For the $2d$ estimate \eqref{E:Bourgain1} see Bourgain \cite{Bou} Lemma 111; 
the $1d$ case appears in \cite{CKSTT01} Lemma 7.1; another nice proof is given in Prop. 3.5 in  Koch-Tataru \cite{KT}, the other dimensions are analogous. We review the $1d$ proof to show that the second estimate \eqref{E:Bourgain2} holds as well.

Denote $u = e^{it \Delta} (P_{N_1} \phi_1)$ and $v = e^{\pm i t \Delta} (P_{N_2} \phi_2)$.
Then in the $1d$ case,
\begin{align}
\label{E:B1}
\widehat{u v} (\xi, \tau) &= \int_{\xi_1+\xi_2 = \xi} \widehat{P_{N_1} \phi_1}(\xi_1)
\widehat{P_{N_2} \phi_2}(\xi_2) \, \delta(\tau - (\xi_1^2 \pm \xi_2^2)) \, d\xi_1 \\
& = \frac1{|g^\prime_{\xi_1}(\xi_1,\xi_2)|} \, \widehat{P_{N_1}\phi_1}\widehat{P_{N_2}\phi_2}_{|_{(\xi_1,\xi_2)}},
\label{E:B2}
\end{align}
where $g(\xi_1,\xi_2) = \tau-(\xi_1^2\pm \xi_2^2)$, thus, $|g'_{\xi_1}| = 2 |\xi_1\pm\xi_2|$.
To estimate the $L^2_{\xi,\tau}$ norm of $uv$, we square the expression above and integrate in $\tau$ and $\xi$. Changing variables $(\tau, \xi)$ to $(\xi_1,\xi_2)$ with
$\tau = \xi_1^2 \pm \xi_2^2$ and $\xi = \xi_1+\xi_2$, we obtain
$d\tau d\xi = J \, d\xi_1 d\xi_2$ with the Jacobian $J=2|\xi_1 \pm \xi_2|$ which is of size $N_2$ (note that $\pm$ does not matter here, since $N_2 \gg N_1$).
Bringing the square inside, 
we get
$$
\|u v\|_{L^2_x}^2 \lesssim \int_{|\xi_1|\sim N_1, |\xi_2|\sim N_2} |\widehat{\phi_1}(\xi_1)|^2 |\widehat{\phi_2}(\xi_2)|^2 \, \frac{d\xi_1 \, d\xi_2}{|\xi_1 \pm \xi_2|} \lesssim \frac1{N_2} \|\phi_1\|_{L^2_x}^2 \|\phi_2\|_{L^2_x}^2.
$$
\end{proof}

Now we introduce the Fourier restriction norms.  For $\tilde u \in \mathcal{S}(\mathbb{R}^{1+d})$
\begin{align*}\|\tilde u\|_{X_{s,b}} &= \|\la D_t\ra^b \la D_x\ra^s e^{-it\Delta}\tilde u(\cdot,t)\|_{L_t^2L_x^2} \\
&= \left( \int_\xi \int_\tau |\widehat{\tilde u}(\xi,\tau)|^2 \la \xi\ra^{2s} \la \tau+|\xi|^2\ra^{2b} \, d\xi\, d\tau \right)^{1/2}.
\end{align*}
If $I\subset \mathbb{R}$ is an open subinterval and $u\in \mathcal{D}'(I\times \mathbb{R}^d)$, define
$$
\|u\|_{X_{s,b}(I)} = \inf_{\tilde u} \|\tilde u \|_{X_{s,b}},
$$
where the infimum is taken over all distributions $\tilde u \in \mathcal{S}'(\mathbb{R}^{1+d})$ such that $\tilde u\big|_I = u$.

\begin{lemma}
If $\theta$ is a function such that $\supp\theta \subset I$, then for all $0<b<1$,
\begin{equation}
\label{E:step-150}
\| \theta u \|_{X_{s,b}} \lesssim (\|\theta\|_{L^\infty}+\|D_t^{\max(\frac12,b)} \theta\|_{L^2})\|u\|_{X_{s,b}(I)} \,.
\end{equation}
If $0\leq b<\frac12$ and $\chi_I$ is the (sharp) characteristic function of the time interval $I$, then
\begin{equation}
\label{E:step-151}
\| \chi_I u \|_{X_{s,b}} \sim \|u\|_{X_{s,b}(I)} \,.
\end{equation}
\end{lemma}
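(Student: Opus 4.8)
The plan is to prove the two $X_{s,b}$-localization estimates \eqref{E:step-150} and \eqref{E:step-151} by reducing everything to the free-Schr\"odinger picture, where $X_{s,b}$ becomes an ordinary $L^2$-based Bessel-potential space in $(x,t)$. Concretely, set $w(t) = e^{-it\Delta} u(\cdot,t)$ and $\tilde w(t) = e^{-it\Delta}\tilde u(\cdot,t)$, so that conjugation by the free propagator is an isometry: $\|\tilde u\|_{X_{s,b}} = \|\tilde w\|_{H_t^b H_x^s}$, and the restriction norm becomes $\|u\|_{X_{s,b}(I)} = \inf\{\|\tilde w\|_{H_t^bH_x^s} : \tilde w|_I = w\}$. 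Moreover, multiplication in physical time commutes with $e^{-it\Delta}$ in the obvious sense: $e^{-it\Delta}(\theta(t)u(\cdot,t)) = \theta(t)w(t)$. Thus both \eqref{E:step-150} and \eqref{E:step-151} are exactly the classical statements that (i) multiplication by $\theta$ is bounded on $H^b$ in the time variable for $0<b<1$, with operator norm controlled by $\|\theta\|_{L^\infty} + \|D_t^{\max(1/2,b)}\theta\|_{L^2}$, and (ii) for $0\le b<1/2$, multiplication by the sharp cutoff $\chi_I$ is bounded on $H^b_t$ and, conversely, produces an extension realizing the infimum up to a constant. The spatial variable $x$ and the exponent $s$ are inert throughout and may be carried along by viewing everything as $H^s_x$-valued functions of $t$; so it suffices to prove the one-dimensional (in $t$) multiplier statements with values in a Hilbert space.

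For \eqref{E:step-150}, I would argue as follows. Fix an extension $\tilde w$ of $w$ with $\|\tilde w\|_{H_t^bH_x^s}$ nearly achieving the infimum; then $\theta u$ extends to $\theta\tilde w$ (since $\supp\theta\subset I$, this agrees with $\theta u$ on $I$ and in fact everywhere), so $\|\theta u\|_{X_{s,b}} \le \|\theta\tilde w\|_{H_t^bH_x^s}$ and it remains to bound the latter by $(\|\theta\|_{L^\infty}+\|D_t^{\max(1/2,b)}\theta\|_{L^2})\|\tilde w\|_{H_t^bH_x^s}$. For $0<b\le 1/2$ this is a standard fractional-Leibniz/paraproduct estimate: split $\theta\tilde w$ into the piece where $\theta$ is at lower frequency (handled by $\|\theta\|_{L^\infty}$) and the piece where $\tilde w$ is at lower or comparable frequency; in the latter, the fractional derivative of order $b$ can be moved entirely onto $\theta$, and since $b\le 1/2$ one estimates $\|D_t^b\theta\cdot\tilde w\|_{L_t^2}$ via H\"older with $L^\infty_t$ in $\tilde w$ — but to stay $L^2$-based one instead uses $\|D_t^{1/2}\theta\|_{L^2}$ and the Sobolev embedding $H^b_t\hookrightarrow$ ... ; more cleanly, one uses the characterization of $H^b$ by the Gagliardo seminorm $\iint |f(t)-f(t')|^2|t-t'|^{-1-2b}\,dt\,dt'$ and the elementary pointwise bound $|(\theta\tilde w)(t)-(\theta\tilde w)(t')| \le |\theta(t)||\tilde w(t)-\tilde w(t')| + |\tilde w(t')||\theta(t)-\theta(t')|$, which after squaring and integrating gives the $\|\theta\|_{L^\infty}$ term plus a term $\iint |\tilde w(t')|^2|\theta(t)-\theta(t')|^2|t-t'|^{-1-2b}$; bounding $|\tilde w(t')|$ by $\|\tilde w\|_{L^\infty_t}\lesssim \|\tilde w\|_{H^b_t}$ (valid for $b>1/2$) or, for $b\le 1/2$, by absorbing into a frequency-localized argument, yields $\|D_t^b\theta\|_{L^2}\lesssim \|D_t^{1/2}\theta\|_{L^2}$ via interpolation with $\|\theta\|_{L^\infty}$. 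For $1/2<b<1$ one differentiates: write $D_t^b(\theta\tilde w)$ using $b = 1\cdot\frac{}{}$... and the product rule $\partial_t(\theta\tilde w) = (\partial_t\theta)\tilde w + \theta\,\partial_t\tilde w$, interpolating between $b=0$ and $b=1$; the $b=1$ endpoint needs $\|(\partial_t\theta)\tilde w\|_{L^2_t}\lesssim \|\partial_t\theta\|_{L^2}\|\tilde w\|_{L^\infty_t}\lesssim \|D_t\theta\|_{L^2}\|\tilde w\|_{H^b_t}$, again using $H^b_t\hookrightarrow L^\infty_t$ for $b>1/2$, which is why the threshold $\max(1/2,b)$ appears.

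For \eqref{E:step-151}, the direction $\|u\|_{X_{s,b}(I)}\lesssim \|\chi_I u\|_{X_{s,b}}$ is immediate since $\chi_I u$ is itself an admissible extension of $u$. The reverse, $\|\chi_I u\|_{X_{s,b}}\lesssim \|u\|_{X_{s,b}(I)}$, reduces (by picking a near-optimal extension $\tilde w$) to the assertion that multiplication by $\chi_I$ is bounded on $H^b_t$ for $0\le b<1/2$; this is the classical fact that the sharp characteristic function of an interval is a pointwise multiplier on $H^b$ exactly in the range $b<1/2$ (failing at $b=1/2$), provable directly from the Gagliardo seminorm: $|(\chi_I f)(t)-(\chi_I f)(t')|\le |f(t)-f(t')| + |f(t')|\,|\chi_I(t)-\chi_I(t')|$, and the double integral of the second term is controlled by $\int |f(t')|^2 \big(\int_{t\notin I}|t-t'|^{-1-2b}dt\big)dt' \lesssim \int |f(t')|^2 \dist(t',\partial I)^{-2b}\,dt'$, which is finite and $\lesssim \|f\|_{H^b_t}^2$ by Hardy's inequality precisely when $b<1/2$. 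Transporting back through $e^{it\Delta}$ gives \eqref{E:step-151}. The main obstacle — really the only subtle point — is the borderline frequency/derivative bookkeeping in \eqref{E:step-150} when $b\le 1/2$, i.e.\ justifying that $\|D_t^{1/2}\theta\|_{L^2}$ (rather than $\|D_t^b\theta\|_{L^2}$, which is what naively comes out) suffices; this is handled by interpolation against the trivial $L^\infty$ bound and is where one must be careful that no endpoint is actually touched. I would therefore present \eqref{E:step-151} first and in full, then \eqref{E:step-150} by the Gagliardo-seminorm computation for $b\le 1/2$ and the Leibniz-rule interpolation for $b>1/2$, remarking that these are standard (cf.\ the $X_{s,b}$ literature) and citing accordingly.
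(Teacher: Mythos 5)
Your plan is essentially the paper's: reduce to $s=0$ by pulling $\la D_x\ra^s$ through the $t$-multiplier, conjugate by $e^{-it\Delta}$ to turn $X_{0,b}$ into $H^b_t L^2_x$, deduce \eqref{E:step-150} from the fractional Leibniz rule, and for \eqref{E:step-151} combine the boundedness of sharp-interval multiplication on $H^b_t$ ($|b|<\tfrac12$) with the near-optimal-extension argument. The one cosmetic difference is that you prove the $\chi_I$ multiplier bound from scratch via the Gagliardo seminorm and a fractional Hardy inequality, where the paper simply cites Jerison--Kenig; otherwise the route matches, though your write-up of the Leibniz step for \eqref{E:step-150} leaves the $b\le\tfrac12$ case and the $b>\tfrac12$ interpolation somewhat hand-wavy (e.g.\ interpolating the $b=0$ and $b=1$ endpoints yields $\|\theta\|_{L^\infty}^{1-b}\|D_t\theta\|_{L^2}^b$, not $\|D_t^b\theta\|_{L^2}$; a clean treatment is the Coifman--Meyer/Kato--Ponce splitting $\|D^b(\theta\tilde w)\|_{L^2}\lesssim\|\theta\|_{L^\infty}\|D^b\tilde w\|_{L^2}+\|D^b\theta\|_{L^{1/b}}\|\tilde w\|_{L^{2/(1-2b)}}$ followed by Sobolev on both factors).
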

\begin{proof}
It suffices to take $s=0$.
The inequality \eqref{E:step-150} follows from the fractional Leibniz rule.  To address \eqref{E:step-151}, we note that Jerison-Kenig \cite{JK95} prove that for $-\frac12<b<\frac12$, $\|\chi_{(0,+\infty)} f \|_{H_t^b} \lesssim \|f\|_{H_t^b}$.  Consequently, $\|\chi_I f \|_{H_t^b} \lesssim \|f\|_{H_t^b}$ for any time interval $I$.  Let $\tilde u$ be an extension of $u$ (meaning $\tilde u\big|_{I} = u$) so that $\|\tilde u \|_{X_{0,b}} \leq 2\|u\|_{X_{0,b}(I)}$. Then
\begin{align*}
\|\chi_I u \|_{X_{0,b}} &= \| \la D_t \ra^b e^{-it\Delta}\chi_I \tilde u \|_{L_t^2L_x^2} \\
&=  \| \; \| \chi_I e^{-it\Delta}\tilde u \|_{H_t^b} \; \|_{L_x^2} \\
&\lesssim \| \; \| e^{-it\Delta}\tilde u \|_{H_t^b} \; \|_{L_x^2} \\
& = \|\tilde u \|_{X_{0,b}} \\
&\leq 2 \|u\|_{X_{0,b}(I)}\,.
\end{align*}
On the other hand, the inequality $\|u\|_{X_{0,b}(I)} \lesssim \|\chi_I u\|_{X_{0,b}}$ is trivial, since $\chi_I u $ is an extension of $u\big|_{I}$.
\end{proof}

\begin{lemma}
\label{L:flow}
If $i\partial_t u + \Delta u = f$  on a time interval $I=(a,d)$ with $|I|=O(1)$, then
\begin{enumerate}
\item For $\frac12<b\leq 1$,  taking $I'=(a-\omega,d+\omega)$, $0<\omega\leq 1$, we have
\begin{equation}
\label{E:step-101}
\|u(t) - e^{i(t-a)\Delta}u(a)\|_{X_{0,b}(I)} \lesssim \omega^{\frac12-b} \|f\|_{X_{0,b-1}(I')}.
\end{equation}
\item For $0\leq b<\frac12$,
\begin{equation}
\label{E:step-102}
\|u(t) - e^{i(t-a)\Delta}u(a)\|_{X_{0,b}(I)} \lesssim \|f\|_{L_I^1L_x^2} \,.
\end{equation}
\end{enumerate}
Moreover, for all $b$,
$$
\| e^{i(t-a)\Delta}\phi \|_{X_{0,b}(I)} \lesssim \|\phi\|_{L_x^2} \,.
$$
\end{lemma}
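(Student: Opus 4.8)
These are the standard Fourier-restriction-space linear estimates, and I would establish them in the order: the homogeneous ``Moreover'' bound, then part (2), then part (1). For the homogeneous estimate, since $|I|=O(1)$ I pick $\theta\in C_c^\infty$ with $\theta\equiv 1$ on $I$ and $\supp\theta$ contained in a fixed interval of length $O(1)$. Then $\theta(t)\,e^{i(t-a)\Delta}\phi$ is an admissible extension of $e^{i(t-a)\Delta}\phi\big|_I$, and since $e^{-it\Delta}\big(\theta(t)\,e^{i(t-a)\Delta}\phi\big)=\theta(t)\,e^{-ia\Delta}\phi$ has an $x$-factor independent of $t$, its $X_{0,b}$ norm equals $\|\theta\|_{H_t^b}\,\|\phi\|_{L_x^2}\lesssim\|\phi\|_{L_x^2}$. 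The same computation, with $I$ replaced by any sub-interval of length $O(1)$, is reused in part (2).

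For part (2), with $0\le b<\tfrac12$: by Duhamel, $w(t):=u(t)-e^{i(t-a)\Delta}u(a)=-i\int_a^t e^{i(t-s)\Delta}f(s)\,ds$ on $I$. The sharp cutoff $\chi_I w$ extends $w\big|_I$, so $\|w\|_{X_{0,b}(I)}\le\|\chi_I w\|_{X_{0,b}}$, and I would write $\chi_I(t)w(t)=-i\int_a^d \chi_{(s,d)}(t)\,e^{i(t-s)\Delta}f(s)\,ds$ and apply Minkowski's integral inequality in the Hilbert-space norm $X_{0,b}$. This reduces matters to bounding, for each fixed $s\in I$, the $X_{0,b}$ norm of the single profile $\chi_{(s,d)}(t)\,e^{i(t-s)\Delta}f(s)$; by the cutoff equivalence \eqref{E:step-151} (valid precisely because $0\le b<\tfrac12$) this is $\sim\|e^{i(t-s)\Delta}f(s)\|_{X_{0,b}((s,d))}\lesssim\|f(s)\|_{L_x^2}$ by the homogeneous estimate, since $|(s,d)|\le|I|=O(1)$. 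Integrating in $s$ yields the bound by $\|f\|_{L_I^1 L_x^2}$. Note that $b<\tfrac12$ is used only to absorb the sharp Duhamel cutoff $\chi_{\{s<t\}}$ through \eqref{E:step-151}.

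For part (1), with $\tfrac12<b\le1$: this is the delicate case, with two ingredients. The first is the classical inhomogeneous estimate $\bigl\|\psi(t)\int_a^t e^{i(t-s)\Delta}g(s)\,ds\bigr\|_{X_{0,b}}\lesssim\|g\|_{X_{0,b-1}}$ for a fixed bump $\psi$; applying $e^{-it\Delta}$ reduces it to the one-dimensional-in-time statement $\|\psi\int_a^t h\,ds\|_{H_t^b}\lesssim\|h\|_{H_t^{b-1}}$, which is proved in the standard way by splitting the resulting Fourier multiplier into its $|\tau|\lesssim1$ and $|\tau|\gtrsim1$ parts (the endpoint $b=1$ requiring the usual minor modifications). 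The second is the $\omega^{1/2-b}$ gain: I choose a cutoff $\eta$ with $\eta\equiv1$ on $I$, $\supp\eta\subset I'$, and transitions at scale $\omega$, so $\|\eta^{(k)}\|_{L^\infty}\lesssim\omega^{-k}$; a direct Fourier-side estimate, using $|\widehat\eta(\tau)|\lesssim\min(1,|\tau|^{-1})$ for $|\tau|\lesssim\omega^{-1}$ and rapid decay beyond, then gives $\|\eta\|_{L^\infty}+\|D_t^b\eta\|_{L^2}\lesssim\omega^{1/2-b}$ — and here is exactly where $b>\tfrac12$ enters, since it makes the range $1\lesssim|\tau|\lesssim\omega^{-1}$ dominate with the correct power (crude interpolation is too lossy). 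Assembling: replace $f$ by the sharp cutoff $g=\chi_{I'}f$, controlled by $\|f\|_{X_{0,b-1}(I')}$ via the Jerison--Kenig bound underlying \eqref{E:step-151} (applicable since $-\tfrac12<b-1\le0$); then $-i\,\eta(t)\int_a^t e^{i(t-s)\Delta}g(s)\,ds$ extends $w\big|_I$, so \eqref{E:step-150} bounds its $X_{0,b}$ norm by $\big(\|\eta\|_{L^\infty}+\|D_t^b\eta\|_{L^2}\big)\,\|\int_a^t e^{i(t-s)\Delta}g\,ds\|_{X_{0,b}(J)}$ for an interval $J\supset\supp\eta$ of length $O(1)$, and the latter is $\lesssim\|g\|_{X_{0,b-1}}\lesssim\|f\|_{X_{0,b-1}(I')}$ by the classical estimate.

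I expect the main obstacle to be the bookkeeping in part (1): extracting the precise power $\omega^{1/2-b}$ forces the sharp Fourier estimate on the bump $\eta$ rather than crude interpolation, and arranging the extension so that only $\|f\|_{X_{0,b-1}(I')}$ — rather than a global norm of $f$ — appears requires the intermediate sharp-cutoff step together with careful use of the cutoff lemmas \eqref{E:step-150}--\eqref{E:step-151} near the endpoint $b=1$. The underlying Duhamel and cutoff estimates are themselves entirely standard.
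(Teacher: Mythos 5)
Your proof is correct, and it takes a genuinely different route on both nontrivial parts. For (2), you write the Duhamel term as $-i\int_a^d \chi_{(s,d)}(t)\,e^{i(t-s)\Delta}f(s)\,ds$ and apply Minkowski in the $X_{0,b}$ Hilbert norm, reducing to the homogeneous bound on each frozen profile via the sharp-cutoff equivalence \eqref{E:step-151}; the paper instead uses the algebraic identity $\chi_I(t)\int_0^t g = \chi_I\,[\chi_I*(g\chi_I)]$ together with Young's inequality, so that the whole estimate collapses to $\|\la D\ra^b\chi_I\|_{L^2}<\infty$. Both give exactly $\|f\|_{L_I^1L_x^2}$, but the paper's convolution trick is more self-contained (it never invokes the homogeneous estimate), while your Minkowski route is more transparent and reusable. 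For (1), you factor the argument into (a) the fixed-scale classical estimate $\|\psi\int_a^t h\|_{H^b}\lesssim\|h\|_{H^{b-1}}$ and (b) the cutoff gain $\|\eta\|_{L^\infty}+\|D_t^b\eta\|_{L^2}\lesssim\omega^{1/2-b}$ fed through \eqref{E:step-150}, whereas the paper builds the $\omega$-dependent cutoff $\theta$ directly into the reduced estimate \eqref{E:integral} and tracks the constant $C_\theta\lesssim\omega^{1/2-b}$ through a three-term decomposition $H_1+H_2+H_3$ (low time-frequency part, a $\sgn*$ convolution, and a constant term). Your two-step factoring is more modular and cleanly isolates where $\omega$ enters, at the cost of one extra appeal to the extension lemma; the paper's one-shot computation is longer but avoids stacking two inequalities. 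One small wrinkle you inherit from the paper: \eqref{E:step-150} is stated only for $0<b<1$, so at the endpoint $b=1$ you would need the obvious modification (replace $D_t^{\max(\frac12,b)}$ by $\partial_t$), exactly as the paper silently does when it writes $\|\theta'\|_{L^{2/(3-2b)}}$ at $b=1$. Your identification of where $b>\frac12$ enters — making the dyadic range $1\lesssim|\tau|\lesssim\omega^{-1}$ dominate $\|D_t^b\eta\|_{L^2}$ — is accurate, though note $b>\frac12$ is also needed in ingredient (a) itself.
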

\begin{proof}
Without loss, we take $a=0$.  First we consider \eqref{E:step-101}.  Since, for $t\in I$,
$$
e^{-it\Delta}u(\cdot,t) = u(0) -i \theta(t)\int_0^t e^{-it'\Delta} \theta(t') f(\cdot,t') \, dt'\,,
$$
where $\theta$ is a cutoff function such that $\theta(t)=1$ on $I$ and $\supp \theta \subset I'$,
the estimate reduces to the space-independent estimate
\begin{equation}
\label{E:integral}
\left\| \theta(t) \int_0^t h(t') \, dt' \right\|_{H_t^b} \lesssim \|h\|_{H_t^{b-1}}, \qquad \text{for }\tfrac12<b\leq 1
\end{equation}
by \eqref{E:step-150}.
Now we prove estimate \eqref{E:integral}. Divide $h=P_{\leq 1}h + P_{\geq 1}h$ and use that $\int_0^t P_{\geq 1} h(t') = \frac{1}{2}\int (\sgn(t-t') + \sgn(t')) P_{\geq 1} h(t') \,dt'$ to obtain the decomposition
$$
\theta(t) \int_0^t h(t')\,dt' = H_1(t)+H_2(t)+H_3(t),
$$
where
\begin{align*}
&H_1(t) = \theta(t) \int_0^t P_{\leq 1}h(t') \, dt' \\
&H_2(t) = \tfrac12 \theta(t) [\sgn* P_{\geq 1} h](t) \, dt' \\
&H_3(t) = \tfrac12 \theta(t) \int_{-\infty}^{+\infty} \sgn(t') P_{\geq 1}h(t') \,d t'. \end{align*}
We begin by addressing term $H_1$.
By Sobolev embedding (recall $\frac12<b\leq 1$) and the $L^p \to L^p$ boundedness of the Hilbert transform for $1<p<\infty$,
$$
\| H_1 \|_{H_t^b} \lesssim \|H_1\|_{L_t^2} + \| \partial_tH_1 \|_{L_t^{2/(3-2b)}}\,.
$$
Using that $|I|=O(1)$ and $\|P_{\leq 1} h \|_{L_t^\infty} \lesssim \|h\|_{H_t^{b-1}}$, we thus conclude
$$
\| H_1 \|_{H_t^b} \lesssim (\| \theta \|_{L_t^2} + \|\theta\|_{L_t^{2/(3-2b)}} + \|\theta'\|_{L_t^{2/3-2b}}) \|h\|_{H_t^{b-1}} \,.
$$

Next we address the term $H_2$.  By the fractional Leibniz rule,
$$
\|H_2\|_{H_t^b} \lesssim \|\la D_t \ra^b \theta \|_{L_t^2}\| \sgn * P_{\geq 1}h \|_{L_t^\infty} + \|\theta\|_{L_t^\infty} \| \la D_t \ra^b (\sgn*P_{\geq 1}h) \|_{L_t^2} \,.
$$
However,
$$
\| \sgn * P_{\geq 1}h \|_{L_t^\infty} \lesssim \| \la \tau \ra^{-1} \hat h(\tau) \|_{L_\tau^1} \lesssim \| h \|_{H_t^{b-1}} \,.
$$
On the other hand,
$$
\| \la D_t \ra^b \sgn * P_{\geq 1}h \|_{L_t^2}  \lesssim \| \la \tau\ra^b \la \tau \ra^{-1} \hat h(\tau) \|_{L_\tau^2} \lesssim \|h\|_{H_t^{b-1}}\,.
$$
Consequently,
$$
\| H_2 \|_{H_t^b} \lesssim ( \| \la D_t\ra^b \theta \|_{L_t^2} + \|\theta\|_{L_t^\infty}) \|h\|_{H_t^{b-1}} \,.
$$
For term $H_3$, we have
$$
\|H_3 \|_{H_t^b} \lesssim \|\theta \|_{H_t^b} \left\| \int_{-\infty}^{+\infty} \sgn(t') P_{\geq 1}h(t') \, dt' \right\|_{L_t^\infty} \,.
$$
However, the second term is handled via Parseval's identity
$$
\int_{t'} \sgn(t') P_{\geq 1}h(t') \,dt' = \int_{|\tau|\geq 1} \tau^{-1} \hat h(\tau) \, d\tau \,,
$$
from which the appropriate bounds follow again by Cauchy-Schwarz.    Collecting our estimates for $H_1$, $H_2$, and $H_3$, we have
$$
\left\| \theta(t) \int_0^t h(t') \, dt'  \right\|_{H_t^b} \lesssim C_\theta \|h\|_{H_t^{b-1}},
$$
where
\begin{align*}
C_\theta &= \|\theta\|_{L_t^2} + \|\theta' \|_{L_t^{2/(3-2b)}} + \| \la D_t \ra^b \theta\|_{L_t^2}+ \|\theta\|_{L_t^{2/(3-2b)}} + \|\theta \|_{L_t^\infty} \\
&\lesssim \omega^{\frac12-b}\,.
\end{align*}
This completes the proof of \eqref{E:step-101}.
Next, we prove \eqref{E:step-102}.   We have
$$
e^{-it\Delta}u(\cdot,t) = u(0) -i \int_0^t e^{-it'\Delta} f(\cdot,t') \, dt',
$$
and thus, \eqref{E:step-102} reduces, by \eqref{E:step-151}, to
\begin{equation}
\label{E:integral2}
\left\| \chi_I \int_0^t g(t') \, dt' \right\|_{H_t^b} \lesssim \|g\|_{L_I^1},  \qquad \text{for }0\leq b< \tfrac12 \,.
\end{equation}
To prove \eqref{E:integral2}, note that
$$ 
\chi_I(t) \int_0^t g(t') \, dt' = \chi_I(t) [\chi_I * (g\chi_I)](t) \,.
$$
Hence,
$$ 
\left\| \chi_I \int_0^t g(t') \, dt'  \right\|_{H_t^b} \lesssim \| \la D \ra^b \chi_I \|_{L_t^2} \|g\|_{L_I^1}\,.
$$
The Fourier transform of $\chi_I$ is smooth and decays like $|\tau|^{-1}$ as $|\tau|\to \infty$, and hence, $\| \la D \ra^b \chi_I \|_{L_t^2} <\infty$ for $0\leq b<\frac12$.
\end{proof}

\begin{lemma}[Strichartz estimate]
\label{L:X-Strichartz}
If $(q,r)$ is an admissible pair,
then we have the embedding
$$
\|u \|_{L_I^q L_x^p} \lesssim \|u\|_{X_{0,\frac12+\delta}(I)}.
$$
\end{lemma}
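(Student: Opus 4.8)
The plan is to prove Lemma~\ref{L:X-Strichartz} by the standard transference principle: decompose in the ``modulation'' variable $\tau+|\xi|^2$ and reduce to the free Strichartz estimate of Lemma~\ref{L:Strichartz}. Since the left-hand norm involves no spatial derivatives and $X_{s,b}$ rescales trivially in $s$, there is no loss in taking $s=0$, which is precisely the stated inequality.

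First I would pass to the whole time line. Given $u\in X_{0,\frac12+\delta}(I)$, choose an extension $\tilde u\in\mathcal S'(\mathbb R^{1+d})$ with $\tilde u\big|_I=u$ and $\|\tilde u\|_{X_{0,\frac12+\delta}}\le 2\|u\|_{X_{0,\frac12+\delta}(I)}$. Since $\|u\|_{L_I^qL_x^p}\le\|\tilde u\|_{L_t^qL_x^p}$, it suffices to prove the global bound $\|\tilde u\|_{L_t^qL_x^p}\lesssim\|\tilde u\|_{X_{0,b}}$ with $b=\frac12+\delta$ and then take the infimum over all such extensions. For the global bound, write $\tilde u$ through its space--time Fourier transform and change the time-frequency variable from $\tau$ to $\sigma=\tau+|\xi|^2$, obtaining the superposition
\[
\tilde u(x,t)=\int_{\mathbb R} e^{it\sigma}\,\bigl(e^{it\Delta}f_\sigma\bigr)(x)\,d\sigma,\qquad \widehat{f_\sigma}(\xi)\defeq\widehat{\tilde u}(\xi,\sigma-|\xi|^2),
\]
so that $\int_{\mathbb R}\langle\sigma\rangle^{2b}\|f_\sigma\|_{L_x^2}^2\,d\sigma=\|\tilde u\|_{X_{0,b}}^2$ by Plancherel in $(\xi,\tau)$.

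Since $(q,p)$ is admissible we have $q,p\ge 1$, so Minkowski's integral inequality applies to the mixed norm $L_t^qL_x^p$; combining it with $|e^{it\sigma}|=1$, Lemma~\ref{L:Strichartz}, and Cauchy--Schwarz in $\sigma$ gives
\[
\|\tilde u\|_{L_t^qL_x^p}\le\int_{\mathbb R}\|e^{it\Delta}f_\sigma\|_{L_t^qL_x^p}\,d\sigma\lesssim\int_{\mathbb R}\|f_\sigma\|_{L_x^2}\,d\sigma\le\Bigl(\int_{\mathbb R}\langle\sigma\rangle^{-2b}\,d\sigma\Bigr)^{1/2}\|\tilde u\|_{X_{0,b}},
\]
and the $\sigma$-integral is finite exactly because $2b=1+2\delta>1$ — this is the sole role of the extra $\delta$. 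Taking the infimum over extensions completes the argument.

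I do not expect a genuine obstacle: this is a textbook transference argument. The only points that need a little care are that the definition of admissible pair forces $q,p\ge 1$ (so Minkowski is legitimate, and the excluded endpoint $d=2$, $q=2$, $p=\infty$ never intervenes), that the strict inequality $b>\tfrac12$ is essential for $\langle\sigma\rangle^{-b}\in L_\sigma^2$, and that the implicit constant depends only on $d$ and $\delta$ and not on $|I|$, which is evident from the computation above.
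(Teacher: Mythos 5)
Your proof is correct and is essentially the same transference argument the paper gives: extend to the full time line, foliate by the modulation variable $\sigma=\tau+|\xi|^2$ to write $\tilde u$ as a superposition $\int e^{it\sigma}e^{it\Delta}f_\sigma\,d\sigma$, then apply Minkowski, the free Strichartz estimate, and Cauchy--Schwarz in $\sigma$ using $b>\tfrac12$. No substantive differences.
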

\begin{proof}
We reproduce the well-known argument.  Replace $u$ by an extension to $t\in \mathbb{R}$ so that $\|u\|_{X_{0,\frac12+\delta}} \leq 2 \|u \|_{X_{0,\frac12+\delta}(I)}$.  Write
$$
u(x,t) = \int_\xi \int_\tau  e^{it\tau} e^{ix\cdot \xi} \hat u(\xi,\tau) \, d\tau \, d\xi \,.
$$
Change variables $\tau \mapsto \tau-|\xi|^2$ and apply Fubini to obtain
$$
u(x,t) = \int_\tau e^{it\tau} \int_\xi  e^{-it |\xi|^2} e^{ix\cdot \xi} \hat u(\xi,\tau-|\xi|^2) \, d\xi \, d\tau \,.
$$
Define $f_\tau(x)$ by $\hat f_\tau(\xi) = \hat u(\xi,\tau-|\xi|^2)$.  Then the above reads
$$
u(x,t) = \int_\tau e^{it\tau} e^{it\Delta} f_\tau(x) \, d\tau \,,
$$
and hence,
$$
|u(x,t)| \leq \int_\tau |e^{it\Delta}f_\tau(x)| \, d\tau\,.
$$
Apply the Strichartz norm, the Minkowski integral inequality, appeal to Lemma \ref{L:Strichartz}, and invoke Plancherel to obtain
$$
\|u \|_{L_I^q L_x^p} \lesssim \int_\tau \|\hat f_\tau(\xi)\|_{L^2_\xi} \, d\tau.
$$
The argument is completed using Cauchy-Schwarz in $\tau$ (note that we need $b>\frac12$, since
$\int_\mathbb{R} \la\tau\ra^{-2b} \, d\tau$ has to be finite).
\end{proof}

\begin{lemma}[Bourgain bilinear Strichartz estimate]
\label{L:X-bilinear-Strichartz}
Let $N_1\ll N_2$. Then
$$
\| P_{N_1} u_1 \; P_{N_2}u_2 \|_{L_I^2 L_x^2} \lesssim \left( \frac{N_1^{d-1}}{N_2} \right)^{1/2} \|u_1\|_{X_{0,\frac12+\delta}(I)} \|u_2\|_{X_{0,\frac12+\delta}(I)},
$$
$$
\| P_{N_1} u_1 \; \overline{P_{N_2}u_2} \|_{L_I^2 L_x^2} \lesssim \left( \frac{N_1^{d-1}}{N_2} \right)^{1/2} \|u_1\|_{X_{0,\frac12+\delta}(I)} \|u_2\|_{X_{0,\frac12+\delta}(I)}.
$$
\end{lemma}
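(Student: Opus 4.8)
The plan is to obtain Lemma~\ref{L:X-bilinear-Strichartz} from the free-evolution bilinear estimates of Lemma~\ref{L:bilinear-Strichartz} by the same transference argument used to prove Lemma~\ref{L:X-Strichartz}: an $X_{0,\frac12+\delta}$ function is, up to harmless weights, a superposition of free solutions, and the bilinear bound is linear in each factor, so it passes to the superposition.

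First I would replace $u_1,u_2$ by extensions to $t\in\mathbb{R}$ with $\|u_j\|_{X_{0,\frac12+\delta}}\le 2\|u_j\|_{X_{0,\frac12+\delta}(I)}$; since $P_{N_j}$ acts only in $x$ it commutes with restriction to $I$, and $\|\cdot\|_{L^2_IL^2_x}\le\|\cdot\|_{L^2_{\mathbb{R}}L^2_x}$, so it suffices to prove the estimate on all of $\mathbb{R}^{1+d}$. As in Lemma~\ref{L:X-Strichartz}, for $j=1,2$ define $f^{(j)}_\tau$ by $\widehat{f^{(j)}_\tau}(\xi)=\widehat{u_j}(\xi,\tau-|\xi|^2)$, so that $u_j(x,t)=\int_\tau e^{it\tau}e^{it\Delta}f^{(j)}_\tau(x)\,d\tau$; since the Littlewood--Paley projections are spatial Fourier multipliers they pass through both $e^{it\Delta}$ and the modulation $e^{it\tau}$, giving $P_{N_j}u_j(x,t)=\int_\tau e^{it\tau}\,P_{N_j}e^{it\Delta}f^{(j)}_\tau(x)\,d\tau$. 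Expanding the product,
$$
P_{N_1}u_1\cdot P_{N_2}u_2=\int_{\tau_1}\int_{\tau_2}e^{it(\tau_1+\tau_2)}\big(P_{N_1}e^{it\Delta}f^{(1)}_{\tau_1}\big)\big(P_{N_2}e^{it\Delta}f^{(2)}_{\tau_2}\big)\,d\tau_2\,d\tau_1,
$$
I would take the $L^2_tL^2_x$ norm and push it inside the $\tau_1,\tau_2$ integrals by Minkowski's integral inequality (the unimodular factor $e^{it(\tau_1+\tau_2)}$ drops out); applying \eqref{E:Bourgain1} to each fixed pair $(\tau_1,\tau_2)$ then bounds the whole thing by
$$
\Big(\frac{N_1^{d-1}}{N_2}\Big)^{1/2}\Big(\int_{\tau_1}\|f^{(1)}_{\tau_1}\|_{L^2_x}\,d\tau_1\Big)\Big(\int_{\tau_2}\|f^{(2)}_{\tau_2}\|_{L^2_x}\,d\tau_2\Big).
$$
Finally Cauchy--Schwarz in each $\tau_j$ against $\langle\tau\rangle^{-(\frac12+\delta)}\in L^2_\tau$ (this is the only place $b>\frac12$ is used), followed by the change of variables $\tau_j\mapsto\tau_j+|\xi|^2$ inside $\int_{\tau_j}\langle\tau_j\rangle^{1+2\delta}\|f^{(j)}_{\tau_j}\|_{L^2_x}^{2}\,d\tau_j$, identifies each factor with $\|u_j\|_{X_{0,\frac12+\delta}}$, completing the proof of the first inequality.

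For the conjugated inequality I would instead write $\overline{P_{N_2}u_2(x,t)}=\int_{\tau_2}e^{-it\tau_2}\,\overline{P_{N_2}e^{it\Delta}f^{(2)}_{\tau_2}(x)}\,d\tau_2$, which is legitimate because $P_{N_2}$ has a real even symbol and $\overline{e^{it\Delta}g}=e^{-it\Delta}\bar g$; then $\|\overline{f^{(2)}_{\tau_2}}\|_{L^2_x}=\|f^{(2)}_{\tau_2}\|_{L^2_x}$, the product carries the still-unimodular factor $e^{it(\tau_1-\tau_2)}$, and \eqref{E:Bourgain2} is used in place of \eqref{E:Bourgain1}; the rest of the argument is verbatim. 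I do not anticipate a real obstacle: all the analytic content sits in Lemma~\ref{L:bilinear-Strichartz}, and the remainder is the standard transference principle. The only points deserving care are applying Minkowski's inequality \emph{before} invoking the bilinear bound, the commutation of the Littlewood--Paley truncations through $e^{it\Delta}$ and the modulation, and checking that the weight $\langle\tau\rangle^{-(\frac12+\delta)}$ is square-integrable, which forces the restriction $b>\frac12$ in the hypothesis.
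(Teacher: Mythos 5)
Your proof is correct and is exactly the argument the paper intends: the paper's own proof writes out the superposition representation $u_j(x,t)=\int_\tau e^{it\tau}e^{it\Delta}f_{j,\tau}(x)\,d\tau$ and then says, in one line, to ``plug these into the expression $\|P_{N_1}u_1\,P_{N_2}u_2\|_{L^2_tL^2_x}$ and estimate using Lemma~\ref{L:bilinear-Strichartz}.'' You have simply filled in the steps the paper elides (commuting $P_{N_j}$ through $e^{it\Delta}$ and the modulation, Minkowski before the bilinear bound, Cauchy--Schwarz against $\langle\tau\rangle^{-(\frac12+\delta)}$, and the conjugation bookkeeping that routes the second inequality through \eqref{E:Bourgain2} rather than \eqref{E:Bourgain1}).
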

\begin{proof}
We reproduce the well-known argument.  As in the proof of Lemma \ref{L:X-Strichartz}, taking $f_{j,\tau}(x)$ defined by $\hat f_{j,\tau}(\xi) = \hat u_1(\xi,\tau-|\xi|^2)$, we have
$$
u_j(x,t) = \int_\tau e^{it\tau} \; e^{it\Delta}f_{j,\tau}(x) \, d\tau \,.
$$
Plug these into the expression $\| P_{N_1} u_1 \; P_{N_2}u_2 \|_{L_t^2 L_x^2}$, and then estimate using Lemma \ref{L:bilinear-Strichartz}.
\end{proof}

We need to take $b=\frac12-\delta$ in some places.  In those situations, we use

\begin{lemma}[interpolated Strichartz]
\label{L:interp-1}
Take $d=1$ or $d=2$ and suppose that $0\leq b<\frac12$ and $2\leq p\leq\infty$, $2<q\leq \infty$ satisfy
\begin{align}
\label{E:interp1}
&\frac{2}{q}+\frac{d}{p}>\frac{d}{2}+(1-2b) \\
\label{E:interp2}
&\frac{2}{q}-\frac{1}{p}\leq \frac{1}{2} & \text{in the case }d=1 \text{ only}
\end{align}
(see Fig. \ref{F:interpolation}).
Then
\begin{equation}
\label{E:interp3}
\|u\|_{L_I^q L_x^p} \lesssim \|u\|_{X_{0,b}(I)}.
\end{equation}
with implicit constant dependent upon the size of the gap from equality in \eqref{E:interp1}.
\end{lemma}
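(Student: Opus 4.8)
The plan is to derive the estimate by complex interpolation between the Strichartz embedding at a value of $b$ slightly above $\tfrac12$ and the elementary energy bound at $b=0$. The two inputs are: \emph{(i)} Lemma~\ref{L:X-Strichartz}, which for every admissible pair $(q_0,p_0)$ and every $\delta>0$ gives $\|u\|_{L_I^{q_0}L_x^{p_0}}\lesssim\|u\|_{X_{0,\frac12+\delta}(I)}$ with a constant degenerating as $\delta\to0$; and \emph{(ii)} the trivial bound $\|u\|_{L_I^2L_x^2}\le\|u\|_{X_{0,0}(I)}$, which holds because $e^{-it\Delta}$ is unitary on $L_x^2$, so that $\|\tilde u\|_{X_{0,0}}=\|\tilde u\|_{L_{t,x}^2}$ for any extension $\tilde u$ of $u$ and one takes the infimum. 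Since $X_{0,b}(\mathbb{R}^{1+d})$ becomes a weighted $L^2$ space after the Fourier transform and the shift $\tau\mapsto\tau+|\xi|^2$, and $X_{0,b}(I)$ is a retract of it through the extension operator, one has the interpolation identity $[X_{0,b_0}(I),X_{0,b_1}(I)]_\theta=X_{0,(1-\theta)b_0+\theta b_1}(I)$; together with $[L_I^{q_0}L_x^{p_0},L_I^2L_x^2]_\theta=L_I^qL_x^p$, where $\tfrac1q=\tfrac{1-\theta}{q_0}+\tfrac\theta2$ and $\tfrac1p=\tfrac{1-\theta}{p_0}+\tfrac\theta2$, interpolating the inclusion $u\mapsto u$ yields, for any $\theta\in(0,1)$, any $\delta>0$, and any admissible $(q_0,p_0)$,
$$\|u\|_{L_I^qL_x^p}\lesssim\|u\|_{X_{0,(1-\theta)(\frac12+\delta)}(I)}.$$

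It remains to choose the parameters so that the left side matches the prescribed $(q,p)$ and the right side matches the prescribed $b$. Given $(q,p,b)$ obeying \eqref{E:interp1}--\eqref{E:interp2}, I would set $\theta:=\tfrac2q+\tfrac dp-\tfrac d2$; this is precisely the value that makes the pair $(q_0,p_0)$ determined by $\tfrac1{q_0}=\tfrac{1/q-\theta/2}{1-\theta}$, $\tfrac1{p_0}=\tfrac{1/p-\theta/2}{1-\theta}$ satisfy the admissibility relation $\tfrac2{q_0}+\tfrac d{p_0}=\tfrac d2$. One then verifies the following, all by direct arithmetic: \eqref{E:interp1} is equivalent to $\theta>1-2b\ge0$, while $q>2$ forces $\theta<1$, so $\theta\in(0,1)$; the hypothesis $p\ge2$ gives $\tfrac1{p_0}\le\tfrac12$ and $0\le\tfrac1{q_0}\le\tfrac12$; and $\tfrac1{p_0}\ge0$ is automatic when $d=2$ and is exactly condition \eqref{E:interp2} when $d=1$. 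The excluded endpoint $(q_0,p_0)=(2,\infty)$, relevant only for $d=2$, cannot arise, since $q_0=2$ would force $q=2$. Hence $(q_0,p_0)$ is a bona fide admissible pair. Finally, because $\theta>1-2b$, the number $\delta:=\tfrac{2b-1+\theta}{2(1-\theta)}$ is positive, and with this choice $(1-\theta)(\tfrac12+\delta)=b$ exactly; substituting into the displayed bound gives $\|u\|_{L_I^qL_x^p}\lesssim\|u\|_{X_{0,b}(I)}$, with the implied constant coming from Lemma~\ref{L:X-Strichartz} applied with this $\delta$, hence depending on the gap in \eqref{E:interp1}, as claimed.

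There is no genuine analytic difficulty here: everything reduces to linear relations among $1/p$, $1/q$, and $b$, so the main point is organizational. I would be careful that the complex interpolation of the Fourier-restriction spaces is legitimate --- the standard retract argument, which one can also sidestep by fixing an extension $\tilde u$ with $\|\tilde u\|_{X_{0,b}}\le2\|u\|_{X_{0,b}(I)}$ and interpolating on $\mathbb{R}^{1+d}$ --- and I would dispatch the degenerate exponents at the outset: for $b<\tfrac12$ the inequalities \eqref{E:interp1}--\eqref{E:interp2} are incompatible with $q=\infty$ and with $p=\infty$, and $q=2$ is excluded by hypothesis, so one always has $2<q<\infty$ and $2\le p<\infty$, $\theta\in(0,1)$ strictly, and the recovered pair $(q_0,p_0)$ is non-degenerate.
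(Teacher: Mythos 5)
Your argument is correct and follows essentially the same route as the paper: complex interpolation between Lemma~\ref{L:X-Strichartz} at an admissible pair with exponent $b=\frac12+\delta$ and the trivial identity at $b=0$, with the parameters $\theta$, $\delta$, and the admissible pair solved for from the linear relations so that the interpolated Bourgain exponent equals $b$ and the interpolated Lebesgue pair equals $(q,p)$. You do spell out the admissibility check for $(q_0,p_0)$ (in particular that \eqref{E:interp2} is exactly $1/p_0\ge 0$ when $d=1$) and the exclusion of the degenerate endpoints $q=\infty$, $p=\infty$ more explicitly than the paper, but the underlying argument is the same.
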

\begin{proof}
Let
\begin{equation}
\label{E:interp5}
\alpha \defeq \frac12\left( \frac{2}{q}+\frac{d}{p}-\frac{d}{2}-(1-2b)\right)>0 \,.
\end{equation}
Using $0\leq \theta\leq 1$ as an interpolation parameter,
we aim to deduce \eqref{E:interp3} by interpolation between
\begin{equation}
\label{E:interp4}
\|u\|_{L_t^{\tilde q}L_x^{\tilde p}} \lesssim \|u\|_{X_{0,\frac{b}{2(b-\alpha)}}} \,,
\end{equation}
with weight $\theta$, for some Strichartz admissible pair $(\tilde q,\tilde p)$, and the trivial estimate (equality, in fact)
\begin{equation}
\|u\|_{L_t^2L_x^2} \lesssim \|u\|_{X_{0,0}} \,,
\end{equation}
with weight $1-\theta$.  The interpolation conditions read
\begin{equation}
\label{E:interp6}
\begin{aligned}
&\frac{1}{q} = \frac{\theta}{\tilde q} + \frac{1-\theta}{2} \\
&\frac{1}{p} = \frac{\theta}{\tilde p} + \frac{1-\theta}{2} \,.
\end{aligned}
\end{equation}
Multiplying the first of these relations by $2$ and adding $d$ times the second, and using the Strichartz admissibility condition for $(\tilde q, \tilde p)$, we obtain
$$
\frac{2}{q}+\frac{d}{p} = \frac{d}{2}+(1-\theta) \,.
$$
Combining this relation with \eqref{E:interp5}, we obtain $\theta = 2b-2\alpha$.  We can then solve for $\tilde q$ and $\tilde p$ using \eqref{E:interp6}.
\end{proof}

\begin{figure}
  \caption{The enclosed triangular region gives the values of $(1/q,1/p)$ meeting the hypotheses of Lemma \ref{L:interp-1}.  The top frame is the case $d=1$ and the bottom frame is the case $d=2$.  The proof of Lemma \ref{L:interp-1} involves interpolating between a point on the line $\frac{2}{q}+\frac{d}{p}=\frac{d}2$ and the point $(\frac12,\frac12)$. }
  \centering
    \includegraphics[scale=0.75]{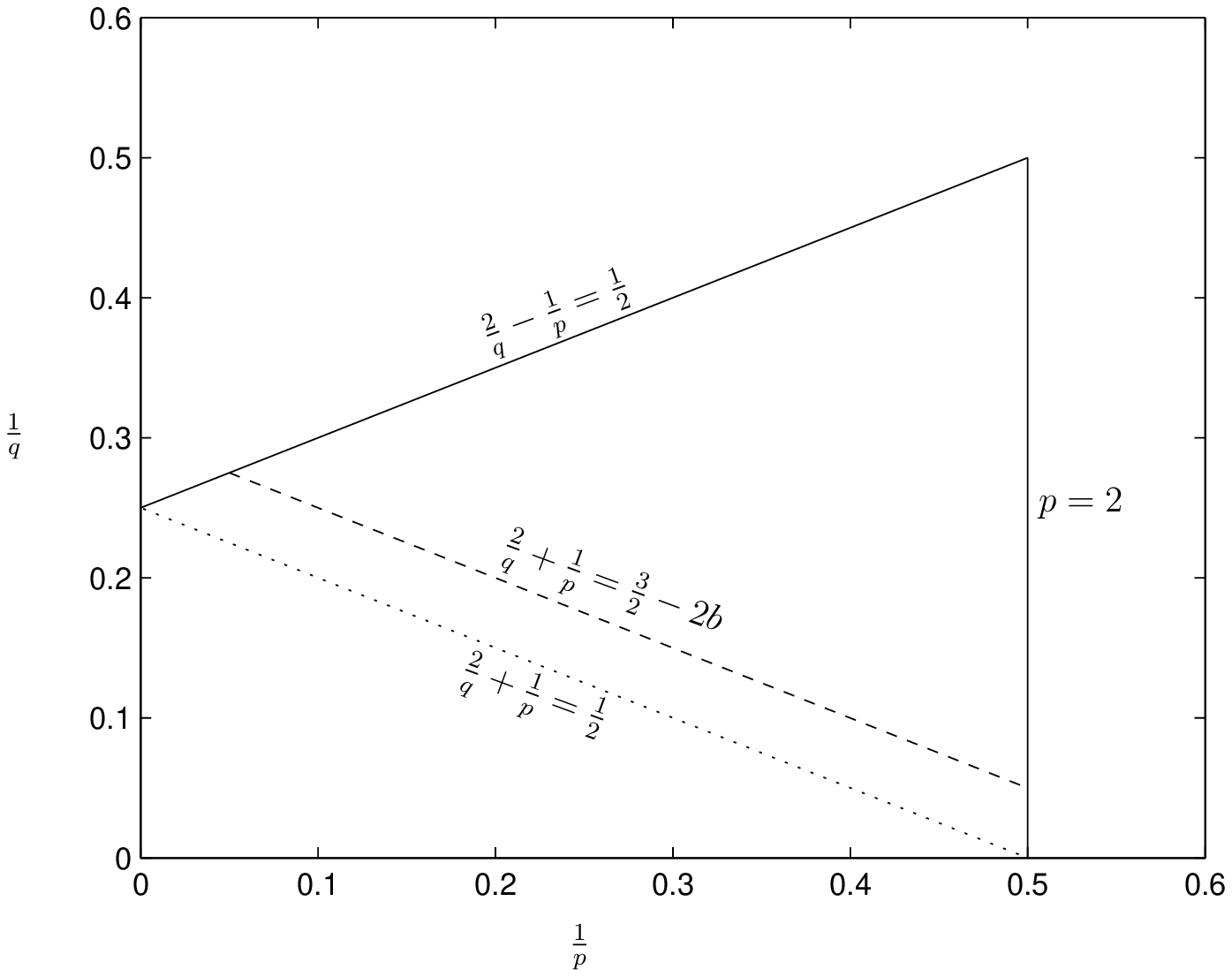}
\includegraphics[scale=0.75]{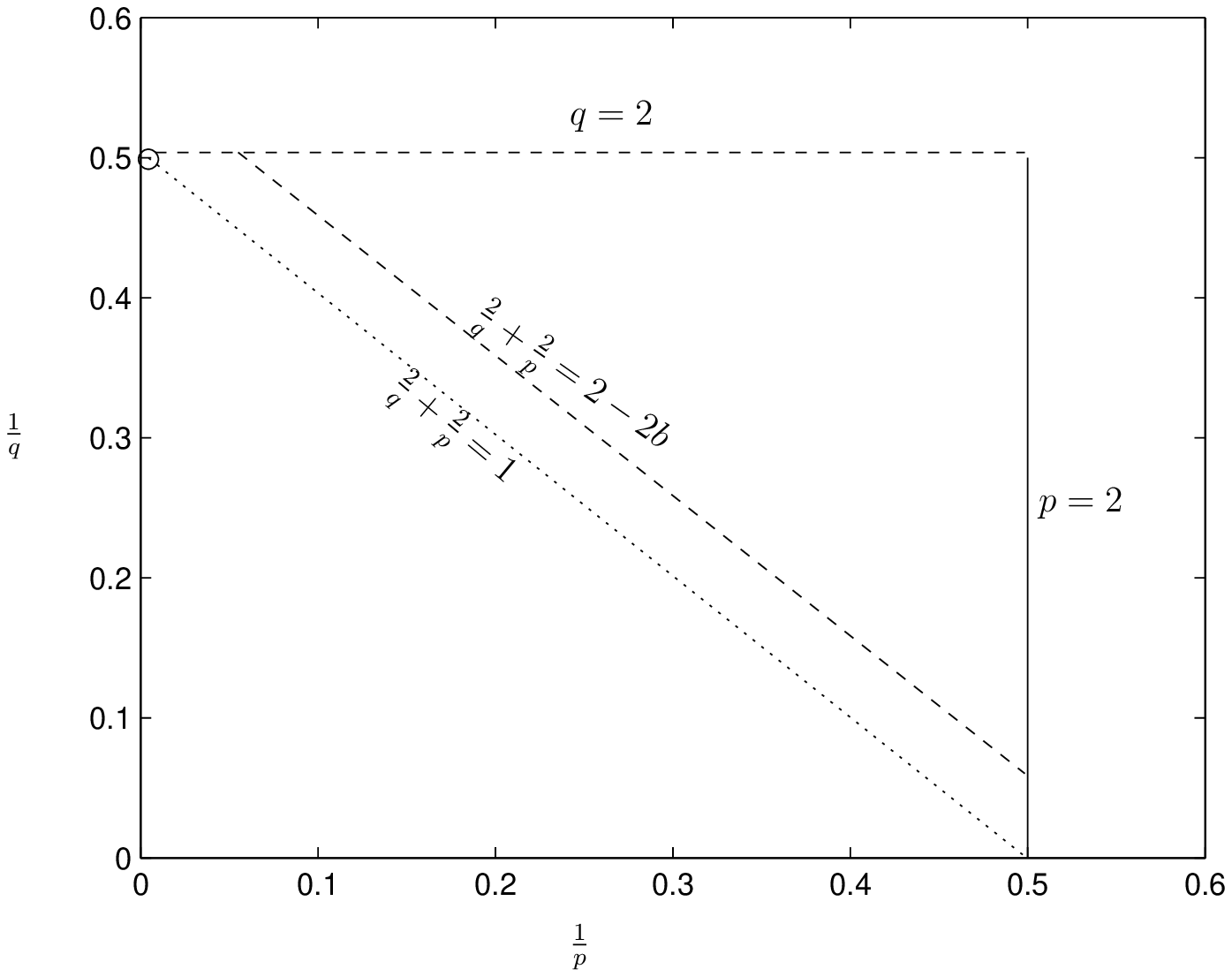}
\label{F:interpolation}
\end{figure}

\begin{lemma}[interpolated bilinear Strichartz]
\label{L:interp-2}
Let $d=1$ or $d=2$ and $N_1\ll  N_2$.
Then
$$
\|P_{N_1} u_1 \; P_{N_2}u_2 \|_{L_I^2 L_x^2} \lesssim \frac{N_1^{\frac12(d-1)}}{N_2^{\frac12-\delta'}} \|u_1\|_{X_{0,\frac12-\delta}(I)} \|u_2\|_{X_{0,\frac12-\delta}(I)}.
$$
\end{lemma}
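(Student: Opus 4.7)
The plan is to obtain the $b=\tfrac12-\delta$ bilinear estimate from the $b=\tfrac12+\delta_0$ version (Lemma \ref{L:X-bilinear-Strichartz}) by a dyadic modulation decomposition, accepting a small $N_2^{\delta'}$ loss in order to handle the divergent $\int\la\tau\ra^{-2b}\,d\tau$ that ruins the $b>\tfrac12$ argument. Let $Q_M$ denote the dyadic projector onto $|\tau+|\xi|^2|\sim M$; I would decompose $u_j=\sum_{M_j}Q_{M_j}u_j$ and estimate each pairing $\|P_{N_1}Q_{M_1}u_1\cdot P_{N_2}Q_{M_2}u_2\|_{L^2_{t,x}}$ in three different ways, then take a geometric mean to balance the divergent sums.

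All three bounds will have the shape $C(N_1,N_2,M_1,M_2)\,\|Q_{M_1}P_{N_1}u_1\|_{L^2_{t,x}}\|Q_{M_2}P_{N_2}u_2\|_{L^2_{t,x}}$. Estimate (i) uses the modulated representation $Q_{M_j}u_j=\int_{|\sigma|\sim M_j}e^{it\sigma}e^{it\Delta}f_{j,\sigma}\,d\sigma$, applies Lemma \ref{L:bilinear-Strichartz} to the integrand, and uses Cauchy--Schwarz in $\sigma$ and Plancherel to arrive at $C=(N_1^{d-1}/N_2)^{1/2}(M_1M_2)^{1/2}$. Estimate (ii) uses H\"older $L^\infty_{t,x}\times L^2_{t,x}$, spatial Bernstein on the first factor (cost $N_1^{d/2}$), and the modulation-Bernstein inequality $\|Q_Mv\|_{L^\infty_tL^2_x}\lesssim M^{1/2}\|Q_Mv\|_{L^2_{t,x}}$ (which follows because the $\tau$-support of $\widehat{Q_Mv}$ has measure $\sim M$), yielding $C=N_1^{d/2}M_1^{1/2}$. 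Estimate (iii) is the analogous H\"older $L^2_tL^\infty_x\times L^\infty_tL^2_x$ bound with spatial Bernstein applied to the low-frequency factor $P_{N_1}u_1$ and modulation-Bernstein applied to $P_{N_2}Q_{M_2}u_2$, giving $C=N_1^{d/2}M_2^{1/2}$.

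The key design point, which I consider the main constraint rather than a real obstacle, is that each trivial bound must use spatial Bernstein on the \emph{low-frequency} side: a Bernstein applied to $P_{N_2}u_2$ would produce an $N_2^{d/2}$ incompatible with the desired $N_2^{-(1/2-\delta')}$ decay. With (i), (ii), (iii) in hand, I take the geometric mean with weights $\theta_1$ on (i) and $\theta_2=\theta_3=(1-\theta_1)/2$ on (ii) and (iii); the resulting coefficient is $N_1^{(d-1)/2+(1-\theta_1)/2}\,N_2^{-\theta_1/2}\,(M_1M_2)^{(1+\theta_1)/4}$. Rewriting $\|Q_{M_j}u_j\|_{L^2}=M_j^{-(1/2-\delta)}\|Q_{M_j}u_j\|_{X_{0,1/2-\delta}}$ converts the $M_j$ exponent to $(\theta_1-1)/4+\delta$, which is strictly negative as soon as $\theta_1<1-4\delta$. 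Taking e.g.\ $\theta_1=1-5\delta$ and summing in $(M_1,M_2)$ via Cauchy--Schwarz together with the orthogonal identity $\sum_M\|Q_Mu_j\|_{X_{0,b}}^2=\|u_j\|_{X_{0,b}}^2$ closes the sum. The hypothesis $N_1\ll N_2$ then lets the residual $N_1^{(1-\theta_1)/2}$ be absorbed into $N_2^{(1-\theta_1)/2}$, producing the claimed $N_1^{(d-1)/2}/N_2^{1/2-\delta'}$ with $\delta'=O(\delta)$. Finally, the restriction to $X_{0,1/2-\delta}(I)$ is standard: one applies the global estimate to extensions $\tilde u_j$ satisfying $\|\tilde u_j\|_{X_{0,1/2-\delta}}\leq 2\|u_j\|_{X_{0,1/2-\delta}(I)}$.
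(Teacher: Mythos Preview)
Your argument is correct: the three pointwise-in-modulation bounds (i)--(iii) are valid, the geometric-mean bookkeeping checks out (the $N_1$-exponent is $(d-\theta_1)/2$, the $N_2$-exponent is $-\theta_1/2$, the $M_j$-exponent is $(1+\theta_1)/4$), and the choice $\theta_1=1-5\delta$ makes the modulation sum converge with the claimed $\delta'=O(\delta)$ loss. The only cosmetic point is that for $M_j\lesssim 1$ the modulation-Bernstein factor should be read as $\la M_j\ra^{1/2}$ rather than $M_j^{1/2}$ (or equivalently one lumps all low modulations into a single $Q_{\leq 1}$ block), but this is standard and does not affect the summation.

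Your route is, however, genuinely different from the paper's. The paper does not perform a modulation decomposition at all. Instead it first proves a \emph{trivial} bilinear bound at a strictly subcritical $b$: by H\"older $L^4_{t,x}\times L^4_{t,x}$ together with the interpolated linear Strichartz estimate (Lemma~\ref{L:interp-1}), one has $\|P_{N_1}u_1\,P_{N_2}u_2\|_{L^2_{t,x}}\lesssim \|u_1\|_{X_{0,\frac38+}}\|u_2\|_{X_{0,\frac38+}}$ in $d=1$ (with a variant using a tiny Sobolev loss in $d=2$). This has no $N_2$-gain but sits comfortably below $b=\tfrac12$. The paper then interpolates this estimate with the full bilinear Strichartz bound at $b=\tfrac12+$ (Lemma~\ref{L:X-bilinear-Strichartz}) to land at $b=\tfrac12-\delta$ with an $N_2^{-1/2+\delta'}$ coefficient. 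So the paper's argument is essentially a two-line bilinear interpolation, made possible by the prior investment in Lemma~\ref{L:interp-1}; your argument is more hands-on and self-contained, trading that black box for an explicit dyadic-in-modulation computation. Both are standard techniques in $X^{s,b}$ analysis and yield the same estimate.
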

\begin{proof} First, observe that
\begin{equation}
\label{E:step-132}
\|P_{N_1} u_1 \; P_{N_2}u_2 \|_{L_I^2 L_x^2} \lesssim \|u_1\|_{L_I^4L_x^4} \|u_2\|_{L_I^4L_x^4}.
\end{equation}
In the case $d=1$, $L_I^4L_x^4$ interpolates between $L_I^6 L_x^6$ and $L_I^2 L_x^2$, and thus, by Lemma \ref{L:interp-1}, $\|u_j\|_{L_I^4L_x^4} \lesssim \|u_j \|_{X_{0,\frac38+\delta}(I)}$.  We conclude that
$$
\|P_{N_1} u_1 \; P_{N_2}u_2 \|_{L_I^2 L_x^2} \lesssim  \|u_1 \|_{X_{0,\frac38+\delta}(I)} \|u_2 \|_{X_{0,\frac38+\delta}(I)}.
$$
Interpolating this with the result of Lemma \ref{L:X-bilinear-Strichartz} completes the proof in the case $d=1$.

In the case $d=2$,  we still begin with \eqref{E:step-132}.  Fix $\epsilon>0$ small.  By Sobolev embedding,
$$
\|P_{N_j} u_j \|_{L_I^4L_x^4} \lesssim N_j^\epsilon \|P_{N_j} u_j\|_{L_I^4L_x^\frac{4}{1+2\epsilon}} \,.
$$
By Lemma \ref{L:interp-1}, we have
$$
\|P_{N_j} u_j\|_{L_I^4L_x^\frac{4}{1+2\epsilon}} \lesssim \|u_j\|_{X_{0,\frac12(1-\epsilon)+}} \,.
$$
Plugging into \eqref{E:step-132}, we obtain
$$
\|P_{N_1} u_1 \; P_{N_2}u_2 \|_{L_I^2 L_x^2} \lesssim N_2^{2\epsilon} \|u_1\|_{X_{0,\frac12(1-\epsilon)+}} \|u_2\|_{X_{0,\frac12(1-\epsilon)+}} \,.
$$
Interpolating this with the result of Lemma \ref{L:X-bilinear-Strichartz} completes the proof in the case $d=2$.
\end{proof}

\begin{remark}
After this section we will adopt the following notation: instead of $X_{s,\frac12+\delta}$ we will simply write $X_{s, \frac12+}$. If an expression has two different Bourgain spaces, it will mean that the delta's will be different. Similarly, if an expression involves $\delta$ in the estimate on the right side, it will mean that this $\delta$ will be different from the one which would be chosen for spaces such as $X_{s,\frac12+}$ or $L^{p-}$.
\end{remark}

The following is a simple consequence of the pseudodifferential calculus -- see Stein \cite{Stein}, Chapter VI, \S 2, Theorem 1 on p. 234 and \S 3, Theorem 2 on p. 237; see also Evans-Zworski \cite{EZ}.

\begin{lemma}
\label{L:commutator}
Suppose that $\phi$ is a smooth function on $\mathbb{R}$ such that $\|\partial_x^\alpha \phi \|_{L^\infty} \leq c_\alpha$ for all $\alpha\geq 0$.  Then for $N\geq 1$,
$$
\|P_{\geq N}(\phi g) - \phi P_{\geq N} g \|_{L^2} \lesssim N^{-1} \|g\|_{L^2} \,.
$$
\end{lemma}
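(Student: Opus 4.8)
The plan is to realize $P_{\geq N}$ as a Fourier multiplier whose symbol scales cleanly in $N$, write down the integral kernel of the commutator $[\phi,P_{\geq N}]$ explicitly, and extract the gain of one derivative — hence the factor $N^{-1}$ — by a single use of the fundamental theorem of calculus together with Schur's test. This is the ``bare hands'' version of the pseudodifferential-calculus argument alluded to in the statement.

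First I would fix a smooth, compactly supported $\Psi_0$ on $\mathbb{R}$ with $\Psi_0\equiv 1$ near the origin, so that $P_{\geq N}$ is the Fourier multiplier with symbol $m_N(\xi)=1-\Psi_0(\xi/N)$ (for $N\geq 1$ any reasonable Littlewood–Paley convention puts $P_{\geq N}$ in this form). Since $\phi(x)-\phi(y)$ annihilates the $\delta$-function contribution coming from the constant $1$ in $m_N$, the integral kernel of $[\phi,P_{\geq N}]$ is
$$ K(x,y) = -\bigl(\phi(x)-\phi(y)\bigr)\,k_N(x-y), \qquad k_N(z)=N\,h(Nz), \quad h:=\Psi_0^{\vee}\in\mathcal{S}(\mathbb{R}). $$
The key point is simply that the parabolic rescaling gives $\bigl\| z\,k_N(z)\bigr\|_{L^1_z} = N^{-1}\bigl\| w\,h(w)\bigr\|_{L^1_w}\lesssim N^{-1}$, finite because $h$ is Schwartz.

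Next I would write $\phi(x)-\phi(y)=(x-y)\int_0^1\phi'\bigl(sx+(1-s)y\bigr)\,ds$, so that with $L_N(z)\defeq z\,k_N(z)$ (of $L^1$-norm $\lesssim N^{-1}$) one obtains
$$ [\phi,P_{\geq N}]g(x) = -\int_0^1\Bigl(\int_{\mathbb{R}} L_N(x-y)\,\phi'\bigl(sx+(1-s)y\bigr)\,g(y)\,dy\Bigr)\,ds =: -\int_0^1 T_s g(x)\,ds. $$
By the Minkowski integral inequality $\|[\phi,P_{\geq N}]g\|_{L^2}\leq\int_0^1\|T_sg\|_{L^2}\,ds$, so it suffices to bound $\|T_s\|_{L^2\to L^2}$ uniformly in $s\in[0,1]$. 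For fixed $s$ the kernel of $T_s$ is $L_N(x-y)\,\phi'(sx+(1-s)y)$, and using only $\|\phi'\|_{L^\infty}\leq c_1$ we get both $\sup_x\int|L_N(x-y)\,\phi'(\cdots)|\,dy\lesssim c_1\|L_N\|_{L^1}\lesssim c_1 N^{-1}$ and the analogous bound with $x,y$ interchanged; Schur's test then yields $\|T_s\|_{L^2\to L^2}\lesssim c_1 N^{-1}$. Combining, $\|[\phi,P_{\geq N}]g\|_{L^2}\lesssim N^{-1}\|g\|_{L^2}$, which is the claim — and note only the bound $c_1$ on $\phi'$ is used, none of the higher $c_\alpha$.

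The only mild subtlety — the one step I would expect to need a moment's care — is that $[\phi,P_{\geq N}]$ is \emph{not} literally the composition of the $O(N^{-1})$-bounded convolution operator with kernel $L_N$ and multiplication by $\phi'$: the fundamental theorem of calculus produces the one-parameter family of ``tilted'' arguments $sx+(1-s)y$, which is precisely what the Minkowski-in-$s$ and Schur maneuver is designed to absorb. Alternatively one could quote the symbolic calculus (as in Stein, Ch.\ VI), under which $P_{\geq N}$ has order $0$ and $[\phi,P_{\geq N}]$ has order $-1$; but converting ``order $-1$'' into the quantitative gain $N^{-1}$ requires tracking the $N$-dependence of the relevant symbol seminorms, which comes down to the same rescaling computation.
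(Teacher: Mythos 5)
Your proof is correct, and it takes a genuinely different route from the paper. The paper's proof invokes the pseudodifferential symbolic calculus (citing Stein Ch.\ VI and Evans--Zworski): it observes that $P_{\geq N}$ has symbol $\chi(N^{-1}\xi)$, that $M_\phi$ has symbol $\phi(x)$, that the commutator's top-order asymptotic term is $N^{-1}\chi'(N^{-1}\xi)\phi'(x)$, and then appeals to $L^2$-boundedness of order-zero operators. You instead work with the integral kernel of the commutator directly: the delta part of $P_{\geq N}$ drops out because $\phi(x)-\phi(y)$ vanishes on the diagonal, the fundamental theorem of calculus converts $\phi(x)-\phi(y)$ into $(x-y)\int_0^1\phi'(sx+(1-s)y)\,ds$, and the factor $(x-y)$ gets absorbed into the kernel $k_N$ where rescaling produces the $N^{-1}$ gain, after which Schur's test (applied uniformly in $s$ and then integrated by Minkowski) closes the argument. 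What each buys: your computation is self-contained, uses only $\|\phi'\|_{L^\infty}=c_1$ rather than the full family $\{c_\alpha\}$ in the hypothesis, and avoids the step you correctly flag as a potential gap in the paper's terse argument --- that quoting ``the commutator has order $-1$'' does not by itself yield the quantitative factor $N^{-1}$, since one must also verify the $N$-dependence of the seminorms controlling the remainder in the symbolic expansion. The paper's route is faster to state and generalizes mechanically to higher-order commutator expansions, but is less explicit. (A minor point: you compute the $L^2$ bound for $[\phi,P_{\geq N}]$ while the lemma states it for $P_{\geq N}(\phi g)-\phi P_{\geq N}g=-[\phi,P_{\geq N}]g$; since only the operator norm is at issue this is immaterial.)
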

\begin{proof}
Let $\chi(\xi)$ be a smooth function that is $1$ for $|\xi|\geq 1$ and is $0$ for $|\xi|\leq \frac12$.  $P_{\geq N}$ is a pseudodifferential operator with symbol $\chi(N^{-1}\xi)$ and $M_\phi$, the operator of multiplication by $\phi$, is a pseudodifferential operator with symbol $\phi(x)$.   The commutator $[P_N,M_\phi]$ has symbol with top-order asymptotic term $N^{-1}\chi'(N^{-1}\xi) \phi'(x)$.  The result then follows from the $L^2\to L^2$ boundedness of $0$-order operators.
\end{proof}

\section{Additional high-frequency regularity}
\label{S:high-frequency}

In this section, we begin the proof of Theorem \ref{T:main} by showing improved regularity at high frequencies, above the blow-up scale, \emph{with no restriction in space} -- this appears as Prop. \ref{P:high-freq} below.  In \S\ref{S:finite-speed} below, we will complete the proof of Theorem \ref{T:main} by appealing to a finite-speed of propagation argument for lower frequencies \emph{after we have restricted in space} to outside the blow-up core.

Consider a solution $u(t)$ to \eqref{E:NLS} in the Merle-Rapha\"el class \eqref{E:MR-class}, let $T_0>0$ be the threshold time, $T>T_0$ the blow-up time and $x_0$ the blow-up point, as described in the introduction.  Our analysis focuses on the time interval $[T_0,T)$ on which the log-log asymptotics \eqref{E:log-log-rate} kick in.
Apply a space-time (rescaling) shift, in which $x=x_0$ is sent to $x=0$ and the time interval $[T_0,T)$ is sent to $[0,1)$, to obtain a transformed solution which we henceforth still denote by $u(t)$.  Now the blow-up time is $T=1$, the blow-up point is $x=0$, and \eqref{E:log-log-rate} becomes\footnote{
The rescaling is the following.  If we take $u(x,t)$ in the original frame (for $T_0\leq t<T$), and let $u(x,t) = \mu^{d/2}v(\mu(x-x_0),\mu^2(t-T_0))$ with $\mu=(T-T_0)^{-1/2}$, then $v(y,s)$ is defined in the modified frame (for $0\leq s<1$).  Moreover, we have $\|\nabla v(s) \|_{L_x^2} \sim (\log|\log\mu^{-2}(1-s)|)^{1/2}(1-s)^{-1/2}$, so now the implicit constant of comparability in \eqref{E:log-log-rescaled} depends on $T-T_0$.}
\begin{equation}
\label{E:log-log-rescaled}
\|\nabla u(t)\|_{L_x^2} \sim \left( \frac{\log|\log(1-t)|}{1-t} \right)^{1/2},
\end{equation}
which is now valid for all $0\leq t< 1$.  Note that now, however, the time $t=0$ ``initial--data,'' which we henceforth denote $u_0$, does not correspond to the original initial--data $u_0$ in Theorem \ref{T:main}.  We remark that the estimate \eqref{E:remainder-bounds} on the remainder $\tilde u(t)$ becomes
\begin{equation}
\label{E:remainder-bounds2}
\| \nabla \tilde u(t) \|_{L_x^2} \lesssim \frac{1}{(1-t)^{1/2}\log(1-t)} \,.
\end{equation}
In our analysis, the norm $L_I^\infty L_x^2$ for an interval $I=[0,T']$, $T'<T$, will be replaced by the norm $X_{0,\frac12+}(I)$.  While we have, from Lemma \ref{L:X-Strichartz}, the bound
$$
\|u\|_{L_I^\infty L_x^2} \lesssim \|u\|_{X_{0,\frac12+}(I)},
$$
the reverse bound does not in general hold.  Nevertheless, \eqref{E:log-log-rescaled} indicates that the solution is blowing-up close to the scale rate $(1-t)^{-1/2}$.  Thus, the local theory combined with \eqref{E:log-log-rescaled} implies a bound on $\|u\|_{X_{1,\frac12+}(I)}$, where $\log|\log(1-T')|$ is weakened to $(1-T')^{-\delta}$.

\begin{lemma}
\label{L:local}
For $I=[0,T']$ with $T'<T$, for $0<s\leq 1$, we have
$$
\|u\|_{X_{s,\frac12+}(I)} \leq c_s (1-T')^{-s(1+\delta)/2}   
$$
with $c_s\nearrow +\infty$ as $s\searrow 0$.
\end{lemma}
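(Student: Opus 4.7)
The plan is to deduce the bound from the pointwise log--log $H^1$ control \eqref{E:log-log-rescaled} via short-time $X_{s,b}$ local theory applied on a partition of $[0,T']$ whose pieces shrink as $t\nearrow 1$.  Absorbing the double--log factor into a tiny power loss, \eqref{E:log-log-rescaled} gives $\|u(t)\|_{H^1}\lesssim (1-t)^{-(1+\delta_0)/2}$ for any fixed $\delta_0>0$, and by interpolation with mass conservation, $\|u(t)\|_{H^s}\lesssim (1-t)^{-s(1+\delta_0)/2}$ for $0<s\le 1$.

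Define the partition $T_0=0$, $T_{k+1}=T_k+\tau_k$ with $\tau_k \defeq \eta\,\|u(T_k)\|_{H^1}^{-\beta}$ for a small absolute $\eta>0$ and a $d$-dependent $\beta>0$ chosen so that a standard $H^s$-subcritical $X_{s,\frac12+}$-bootstrap closes on $J_k = [T_k,T_{k+1}]$.  The bootstrap combines Duhamel starting at $t=T_k$ and Lemma \ref{L:flow} with the multilinear $X_{s,b}$ estimate for $|u|^{4/d}u$ obtained by dyadic decomposition and the Strichartz/bilinear-Strichartz tools of \S\ref{S:estimates} (Lemmas \ref{L:X-Strichartz}, \ref{L:X-bilinear-Strichartz}, \ref{L:interp-1}, \ref{L:interp-2}).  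On each piece it gives
$$
\|u\|_{X_{s,\frac12+}(J_k)} \lesssim \|u(T_k)\|_{H^s} \lesssim (1-T_k)^{-s(1+\delta_0)/2}.
$$
An ODE comparison shows the number $K$ of pieces needed to exhaust $[0,T']$ grows only as a small inverse power of $1-T'$.

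To globalize, I would take a smooth partition of unity $\{\psi_k\}$ subordinate to slight thickenings of $\{J_k\}$, choose near-optimal extensions $\tilde u_k$ of $u|_{J_k}$, and set $\tilde u = \sum_k \psi_k \tilde u_k$, an extension of $u|_I$.  By \eqref{E:step-150} and the triangle inequality,
$$
\|u\|_{X_{s,\frac12+}(I)} \le \|\tilde u\|_{X_{s,\frac12+}} \lesssim \sum_k \tau_k^{-\delta_0}\,\|u\|_{X_{s,\frac12+}(J_k)},
$$
and summing this series via the partition asymptotics yields a bound of the form $c_s\,(1-T')^{-s(1+\delta_0)/2 - O(\delta_0)}$.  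Taking $\delta_0$ small enough in terms of $s$ and the target $\delta$ absorbs the $O(\delta_0)$ into the desired exponent, while the constant $c_s$ collects all prefactors that deteriorate as $\delta_0\to 0$, producing the stated $c_s\nearrow\infty$ as $s\searrow 0$.

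The main obstacle is the multilinear $X_{s,b}$ estimate on each $J_k$: since $|u|^{4/d}u$ is $L^2$-critical, trading a fractional derivative for a positive time power requires the bilinear Strichartz gain of Lemma \ref{L:X-bilinear-Strichartz} to compensate for the derivative loss, and this gain must be organized so that the bootstrap is driven by the explicit $H^1$ blow-up rate rather than the borderline $L^2$ mass.  Balancing the time gain (via the $\omega^{1/2-b}$ factor of Lemma \ref{L:flow} interpolated against the bilinear bound) against the blow-up rate is what fixes the exponent $\beta$ in the partition step and determines the counting used in the assembly.
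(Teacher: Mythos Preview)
Your approach is essentially the same as the paper's: partition $[0,T']$ into short pieces, run the $H^s$-subcritical local theory on each to control the $X_{s,\frac12+}$ norm, and sum.  The paper's implementation is cleaner in one respect worth noting: rather than working at the original scale and tracking an unspecified $\beta$, it takes dyadic times $s_k$ at which $\|\nabla u\|_{L^2}=2^k$, rescales each $[s_k,s_{k+1}]$ (of length $\sim 2^{-2k}\log k$) by the $L^2$-critical scaling to an interval $[0,\log k]$ on which the renormalized solution has $H^1$ norm $\sim 1$, and then invokes the local theory on $\sim\log k$ unit intervals.  This makes the per-piece bound immediate and the final sum a transparent geometric series $\sum_k 2^{k(1+\delta)}\sim 2^{K(1+\delta)}\sim(1-T')^{-(1+\delta)/2}$, so that square-summing versus triangle-inequality summing is irrelevant.

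Two minor points in your write-up.  First, your glued function $\tilde u=\sum_k\psi_k\tilde u_k$ is not an extension of $u|_I$: on the overlap $\supp\psi_k\setminus J_k$ the extension $\tilde u_k$ need not coincide with $u$.  The fix is to choose $\tilde u_k$ as a near-optimal extension of $u$ restricted to the \emph{thickened} interval $\hat J_k\supset\supp\psi_k$, at the cost of running the local theory on $\hat J_k$ (harmless, same order of length).  Second, the $\omega^{\frac12-b}$ factor in Lemma~\ref{L:flow} is a \emph{loss} for $b>\tfrac12$, not a gain; the short-time smallness that closes the bootstrap comes from the $H^s$-subcriticality of the nonlinear estimate (for $s>0$ one has $\||u|^{4/d}u\|_{X_{s,-\frac12+}(J)}\lesssim |J|^{\theta(s)}\|u\|_{X_{s,\frac12+}(J)}^{4/d+1}$ with $\theta(s)>0$), which by scaling forces $\tau_k\sim\|u(T_k)\|_{H^s}^{-2/s}\sim(1-T_k)^{1+\delta_0}$ and hence $\beta=2$ when you base the step on $H^1$.
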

The fact that $c_s$ diverges as $s\searrow 0$ results from the fact that \eqref{E:NLS} is $L^2$-critical, and thus, the local theory estimates break down at $s=0$.  At the technical level, some slack is needed in applying the Strichartz and bilinear Strichartz estimates, hence, need to take $b=\frac12-\delta$ in place of $b=\frac12+\delta'$.
\begin{proof}
We just carry out the argument for $s=1$.
Let $\lambda(t) = \|\nabla u(t)\|_{L^2}^{-1}$.  Let $s_k$ be the increasing sequence of times\footnote{One of the conclusions of the Merle-Rapha\"el analysis is the almost monotonicity
$$
\forall \; t_2 \geq t_1 \,, \qquad \lambda(t_2)< 2\lambda(t_1)
$$
of the scale parameter $\lambda(t) = \|\nabla u(t)\|_{L^2}^{-1}$.}
 such that $\lambda(s_k) = 2^{-k}$, so that $\|\nabla u(t)\|_{L^2}$ doubles over $[s_k,s_{k+1}]$.  From \eqref{E:log-log-rescaled}, we compute that $s_k = 1- 2^{-2k}\log k$. Note that $s_{k+1}-s_k \approx 2^{-2k} \, \log k$. Hence, we can rescale the cutoff solution $u(t)$ on the time interval $[s_k,s_{k+1}]$ to a solution $u'$ on the time interval $[0,\log k]$ so that $\|u'\|_{L_{[0,\log k]}^\infty H_x^1} \sim 1$.  We invoke the local theory over $\sim \log k$ time intervals $J$ each of unit size to obtain $\|u'\|_{X_{1,\frac12+}(J)} \sim 1$, which are square summed to obtain
$\|u'\|_{X_{1,\frac12+}(0,\log k)} \sim (\log k)^{1/2}$.  Returning to the original frame of reference, we conclude that
$$
\| u\|_{X_{1,\frac12+}(s_k,s_{k+1})} \lesssim 2^{k(1+\delta)} \,,
$$
where a $\delta$-loss is incured in part from the $(\log k)^{1/2}$ factor but also from the $b=\frac12+\delta$ weight in the $X$-norm.
Thus,
$$
\|u\|_{X_{1,\frac12+}(0,s_K)} = \left( \sum_{k=1}^{K-1} 2^{2k(1+\delta)} \right)^{1/2} \sim 2^{K(1+\delta)}. 
$$
\end{proof}

Now suppose that $u(t)$ satisfies \eqref{E:log-log-rescaled}.   Let $t_k=1-2^{-k}$ and $I_k = [0,t_k]$.  Then from \eqref{E:log-log-rescaled} and mass conservation, we have
\begin{equation}
\label{E:crude-0}
\|P_{\geq N} u(t) \|_{L_{I_k}^\infty L_x^2} \lesssim
\begin{cases}
2^{k(1+\delta)/2}
N^{-1} & \text{for }N\geq  2^{k(1+\delta)/2}
\\
1 & \text{for } N \leq  2^{k(1+\delta)/2}. 
\end{cases}
\end{equation}
To refine \eqref{E:crude-0}, we will work with local-theory estimates, and thus, use the analogous bound on the Bourgain norm $X_{0,\frac12+}(I_k)$.  From Lemma \ref{L:local} we obtain
\begin{equation}
\label{E:crude}
\| P_{\geq N} u\|_{X_{0,\frac12+}(I_k)} \lesssim N^{-s} \|P_{\geq N}  u\|_{X_{s,\frac12+}(I_k)} \leq c_s N^{-s} 2^{ks(1+\delta)/2}\,.
\end{equation}
We obtain from \eqref{E:crude} that
\begin{equation}
\label{E:crude-2}
\|P_{\geq N} u \|_{X_{0,\frac12+}(I_k)} \lesssim
\begin{cases}
2^{k(1+\delta)/2}N^{-1} & \text{for } N \geq 2^{k(1+\delta)/2}  \\
2^{k\delta'} & \text{for }N \leq 2^{k(1+\delta)/2}.
\end{cases}
\end{equation}

The next step is to run local-theory estimates to improve \eqref{E:crude-2} at \emph{high} frequencies.
Frequencies   $N\lesssim 2^k \sim (1-t_k)^{-1}$ on $I_k$ effectively do not make it out of the blow-up core before blow-up time due to the finite speed of propagation for such frequencies.\footnote{Recall that for the Schr\"odinger equation, frequencies of size $N$ propagate at speed $N$, and thus, travel a distance $O(1)$ in time $N^{-1}$.}  Hence, these \emph{low} frequencies can be controlled by spatial location, which we address in \S \ref{S:finite-speed}.    On the other hand, \eqref{E:crude-2} shows that the solution at frequencies $N\gtrsim 2^{k(1+\delta)/2}$ is small.  Thus, for these \emph{high} frequencies, dispersive estimates might be able, upon iteration, to show that the solution is even smaller at these high frequencies.

To chose an intermediate dividing point between the high frequencies that are capable of exiting the blow-up core before blow-up time ($N\gtrsim 2^k$) and the frequency scale at which the blow-up is taking place ($N\sim 2^{k/2}(\log k)^{1/2}$), we consider frequencies $\geq 2^{3k/4}$ to be \emph{high} frequencies and frequencies $\leq 2^{3k/4}$ to be \emph{low} frequencies.  The goal of this section is Prop. \ref{P:high-freq} below, which shows that the high frequencies are bounded in $H^1$.  In \S \ref{S:finite-speed} below, we will localize in space to the external region and then control the low frequencies.

We first address the dimension $d=1$ case.

\begin{lemma}[high frequency recurrence, 1d]
\label{L:iterator}
Take $d=1$. Let $t_k=1-2^{-k}$ and $I_k = [0,t_k]$.   Let $u(t)$ be a solution such that \eqref{E:log-log-rescaled} holds, and define
\begin{equation}
\label{E:initial-bounds}
\alpha(k,N) \defeq \|P_{\geq N} u \|_{X_{0,\frac12+}(I_k)} \,.
\end{equation}
Then there exists an absolute constant $0<\mu\ll 1$ such that for $N\geq 2^{k(1+\delta)/2}$,
\begin{equation}
\label{E:step-160}
\|P_{\geq N} (u - e^{it\partial_x^2}u_0) \|_{X_{0,\frac12+}(I_k)} \lesssim 2^{k(1+\delta)/2}N^{-1+\delta}\alpha(k+1,\mu N) + 2^{k\delta} \alpha(k+1, \mu N)^2.
\end{equation}
In particular, by \ref{L:flow},
\begin{equation}
\label{E:step-161}
\alpha(k,N) \lesssim \|P_{\geq N}u_0 \|_{L_x^2} + 2^{k(1+\delta)/2}N^{-1+\delta}\alpha(k+1,\mu N) + 2^{k\delta} \alpha(k+1, \mu N)^2.
\end{equation}
\end{lemma}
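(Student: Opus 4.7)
My plan is to use the equation via Lemma~\ref{L:flow}(i) to reduce the bound to a multilinear estimate on $|u|^4 u$, and to exploit that any nontrivial contribution to $P_{\geq N}(|u|^4 u)$ must contain at least one copy of $u$ at frequency $\gtrsim N$. Applying $P_{\geq N}$ to the equation $i\partial_t u + \partial_x^2 u = -|u|^4 u$ and invoking Lemma~\ref{L:flow}(i) on $I = I_k$ with enlargement $I' = I_{k+1}$ (so $\omega\sim 2^{-k}$) and $b=\tfrac12+\delta$ gives
$$\|P_{\geq N}(u-e^{it\partial_x^2}u_0)\|_{X_{0,\frac12+}(I_k)} \lesssim 2^{k\delta}\|P_{\geq N}(|u|^4 u)\|_{X_{0,-\frac12+}(I_{k+1})},$$
the $2^{k\delta}$ coming from $\omega^{1/2-b}$. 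Dualizing the right side against $w$ with $\|w\|_{X_{0,\frac12-}(I_{k+1})}\leq 1$ and decomposing each copy of $u$ as $u = u_{lo} + u_{hi}$ with $u_{lo} = P_{<\mu N}u$, $u_{hi} = P_{\geq \mu N}u$ for a small absolute $\mu\leq 1/10$, the all-$u_{lo}$ term has frequency support in $\{|\xi|<N\}$ and is killed by $P_{\geq N}w$, so at least one factor must be $u_{hi}$. I split into Case A (exactly one $u_{hi}$) and Case B (two or more).

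Case A produces the first term. A typical integral is $\int u_{hi}\cdot u_{lo}\bar u_{lo}u_{lo}\bar u_{lo}\cdot\overline{P_{\geq N}w}$. I apply bilinear Strichartz (Lemma~\ref{L:interp-2}) twice: once pairing one $u_{lo}$ with $\overline{P_{\geq N}w}$ (well-separated frequencies, $L^2_{t,x}$ gain $N^{-1/2+\delta}$), and once pairing $u_{hi}$ with another $u_{lo}$ (same gain after dyadic summation; the narrow diagonal sub-case where the paired dyadic blocks have comparable frequencies near $\mu N$ is absorbed into Case B by thresholding with a thin intermediate annulus). The two leftover $u_{lo}$ factors go into $L^\infty_{t,x}$ via the 1d Gagliardo--Nirenberg inequality $\|u_{lo}(t)\|_{L^\infty_x}\lesssim M_0^{1/2}\|\partial_x u(t)\|_{L^2_x}^{1/2}$ combined with the log-log bound \eqref{E:log-log-rescaled}, which forces $\|\partial_x u\|_{L^\infty_t L^2_x(I_{k+1})}\lesssim 2^{k/2+k\delta}$ and hence $\|u_{lo}\|_{L^\infty_{t,x}(I_{k+1})}\lesssim 2^{k/4+k\delta}$. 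The $X_{0,\frac12-}$ norms of the $u$-factors demanded by the bilinear estimates are controlled via Lemma~\ref{L:local} at a small positive regularity $s=\epsilon$, giving $\lesssim 2^{k\delta}$. Collecting the two bilinear gains $N^{-1+\delta}$, the factor $2^{k/2+k\delta}$ from the two $L^\infty_{t,x}$ factors, the $\alpha(k+1,\mu N)$ from $\|u_{hi}\|_{X_{0,\frac12+}}$, and the prefactor $2^{k\delta}$ from Step~1 yields the first term.

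Case B produces the error term. With two or more $u_{hi}$ factors I avoid bilinear Strichartz: Hölder places the two $u_{hi}$ factors in $L^6_{t,x}$ (each bounded by $\alpha(k+1,\mu N)$ via Lemma~\ref{L:X-Strichartz}), the remaining $u$-factors in $L^{6+}_{t,x}$ (bounded by $\|u\|_{X_{\epsilon,\frac12+}(I_{k+1})}\lesssim 2^{k\delta}$ via Sobolev and Lemma~\ref{L:local}), and $w$ in $L^{6-}_{t,x}$ (valid for $X_{0,\frac12-}$ by Lemma~\ref{L:interp-1}), producing the $2^{k\delta}\alpha(k+1,\mu N)^2$ term. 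The inequality \eqref{E:step-161} then follows from \eqref{E:step-160} combined with the homogeneous bound $\|e^{it\partial_x^2}P_{\geq N}u_0\|_{X_{0,\frac12+}(I_k)}\lesssim \|P_{\geq N}u_0\|_{L^2}$ from Lemma~\ref{L:flow}. The hardest part is the double bilinear Strichartz argument in Case A, in particular the diagonal frequency sub-case; my proposed fix is a careful further subdivision of the $u_{hi}/u_{lo}$ threshold that sends the thin intermediate annulus into a Case-B-type estimate.
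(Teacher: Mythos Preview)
Your proposal is correct and follows essentially the same route as the paper: reduce via Lemma~\ref{L:flow} to the dual pairing $\int P_{\geq N}(|u|^4u)\,w$, split according to how many factors are at frequency $\gtrsim N$, handle the single-high case with two bilinear Strichartz applications plus two $L^\infty_{t,x}$ factors via Gagliardo--Nirenberg, and handle the multi-high case with $L^6$-type Strichartz norms. The one cosmetic difference is that the paper uses a three-way $\text{lo}/\text{med}/\text{hi}$ decomposition (with thresholds $N/160$ and $N/20$) from the outset, which is exactly your proposed ``thin intermediate annulus'' fix: in the paper's Case~1 all four low factors sit at $|\xi|\leq N/160$ while the single high factor sits at $|\xi|\geq N/20$, so the pairing $u_{lo}\cdot u_{hi}$ is automatically frequency-separated and no diagonal sub-case arises; any term with a ``med'' factor falls into Case~2. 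The paper also uses the sharper Lemma~\ref{L:X-bilinear-Strichartz} (rather than the interpolated Lemma~\ref{L:interp-2}) for the $u_{lo}\cdot u_{hi}$ pairing since both live in $X_{0,\frac12+}$, reserving Lemma~\ref{L:interp-2} only for the pairing with $w\in X_{0,\frac12-}$; your use of the interpolated version for both is harmless but slightly wasteful. In Case~B the paper puts only one low factor in $L^6_tL^{6+}_x$ (not all three, and not $L^{6+}_{t,x}$), balanced by $w\in L^6_tL^{6-}_x$; your variant also closes but you should check the time exponent.
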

\begin{proof}
By Lemma \ref{L:flow} \eqref{E:step-101} with $\omega=2^{-k-1}$ and $I=I_k$,
$$
\|P_{\geq N} (u-e^{it\partial_x^2}u_0) \|_{X_{0,\frac12+}(I_k)} \lesssim  2^{k\delta} \|P_{\geq N} (|u|^4 u) \|_{X_{0,-\frac12+}(I_{k+1})} \,.
$$
In the rest of the proof, we estimate the right-hand side of the above estimate, and we will just write $I_k$ instead of $I_{k+1}$ for convenience.
By duality,
$$
\|P_{\geq N} (|u|^4 u) \|_{X_{0,-\frac12+}(I_k)} = \sup_{\|w\|_{X_{0,\frac12-}(I_k)}=1} \int_{I_k} \int_{x\in \mathbb{R}} P_{\geq N}(|u|^4u) \, w \, dx\, dt \,.
$$
Fix $w$ with $\|w\|_{X_{0,\frac12-}(I_k)}=1$ and let
$$
J \defeq \int_{I_k} \int_{x\in \mathbb{R}} P_{\geq N}(|u|^4u) \, w \, dx\, dt \,.
$$
Then $J$ can be decomposed into a finite sum of terms $J_\alpha$, each of the form (we have dropped complex conjugates, since they are unimportant in the analysis)
$$
J_\alpha \defeq \int_0^{t_k} \int_{x\in \mathbb{R}} P_{\geq N}(u_1 u_2 u_3 u_4 u_5) \, w \, dx\, dt
$$
such that each term (after a relabeling of the $u_j$, $1\leq j\leq 5$) falls into exactly one of the following two categories.
\footnote{Indeed, decompose each $u_j$ as $u_j=u_{j,\lo}+u_{j,\med}+u_{j,\hi}$, where $u_{j,\lo} = P_{\leq N/160}u_j$, $u_{j,\med}=P_{N/160 \leq \cdot \leq N/20}$, and $u_{j,\hi} = P_{\geq N/20} u_j$.  Then in the expansion of $u_1u_2u_3u_4u_5$, at least one term must be ``hi''; without loss take this to be $u_5$.  Case 1 corresponds to $u_{1,\lo}u_{2,\lo}u_{3,\lo}u_{4,\lo}u_{5,\hi}$ and Case 2 corresponds to everything else (at least one $u_j$, for $1\leq j \leq 4$, must be ``med'' or ``hi''.  Hence, we can take $\mu=\frac{1}{160}$.}

Note that $w$ is frequency supported in $|\xi|\gtrsim N$.

\noindent\emph{\underline{Case 1} (exactly one high)}.  Each $u_j$ for $1\leq j\leq 4$ is frequency supported in $|\xi|\leq \mu N$ and $u_5$ is frequency supported in $|\xi|\geq 8 \mu N$.  In this case, we estimate as
\begin{equation}
\label{E:step-0}
|J_\alpha| \leq \|u_1\|_{L_{I_k}^\infty L_x^\infty} \|u_2\|_{L_{I_k}^\infty L_x^\infty} \|u_3 u_5\|_{L_{I_k}^2 L_x^2} \|u_4 w \|_{L_{I_k}^2 L_x^2}\,.
\end{equation}
For $j=1,2$, Gagliardo-Nirenberg and \eqref{E:log-log-rescaled} implies
\begin{equation}
\label{E:step-1}
\|u_j\|_{L_{I_k}^\infty L_x^\infty} \lesssim \|u_j\|_{L_{I_k}^\infty L_x^2}^{1/2} \|\partial_x u_j\|_{L_{I_k}^\infty L_x^2}^{1/2} \lesssim 2^{k(1+\delta)/4} \,.
\end{equation}
The bilinear Strichartz estimate (Lemma \ref{L:X-bilinear-Strichartz})  yields
\begin{equation}
\label{E:step-2}
\|u_3 u_5\|_{L_{I_k}^2 L_x^2} \lesssim N^{-1/2} \|u_3 \|_{X_{0,\frac12+}(I_k)} \|u_5\|_{X_{0,\frac12+}(I_k)}\lesssim N^{-1/2} 2^{k\delta} \alpha(k,\mu N).
\end{equation}
The interpolated bilinear Strichartz estimate (Lemma \ref{L:interp-2}) yields
\begin{equation}
\label{E:step-3}
\|u_4 w \|_{L_{I_k}^2 L_x^2} \lesssim N^{-\frac12+\delta} \|u_4\|_{X_{0,\frac12+}(I_k)} \|w\|_{X_{0,\frac12-}(I_k)} \lesssim N^{-\frac12+\delta} 2^{k\delta}.
\end{equation}
Substituting \eqref{E:step-1}, \eqref{E:step-2}, \eqref{E:step-3} into \eqref{E:step-0}, we obtain
$$
|J_\alpha| \lesssim 2^{k(1+\delta)/2}N^{-1+\delta}\alpha(k,\mu N).
$$

\noindent\emph{\underline{Case 2} (at least two high)}.  Both $u_4$ and $u_5$ are frequency supported in $|\xi|\geq \mu N$ (no restrictions on $u_j$ for $1\leq j\leq 3$).  Then we estimate as
\begin{equation}
\label{E:step-9}
|J_\alpha| \leq \|u_1\|_{L_{I_k}^6L_x^{6+\delta}}\|u_2\|_{L_{I_k}^6L_x^6} \|u_3\|_{L_{I_k}^6L_x^6} \|u_4\|_{L_{I_k}^6L_x^6} \|u_5\|_{L_{I_k}^6L_x^6} \|w\|_{L_{I_k}^6L_x^{6-\delta'}}\,.
\end{equation}
For $2\leq j \leq 3$ we invoke the Strichartz estimate (Lemma \ref{L:X-Strichartz}) and \eqref{E:crude-2} to obtain
\begin{equation}
\label{E:step-10}
\|u_j\|_{L_{I_k}^6L_x^6} \lesssim \|u_j \|_{X_{0,\frac12+}(I_k)} \leq 2^{k\delta}\,.
\end{equation}
For $4\leq j \leq 5$ we invoke the Strichartz estimate (Lemma \ref{L:X-Strichartz}) and \eqref{E:initial-bounds} to obtain
\begin{equation}
\label{E:step-10b}
\|u_j\|_{L_{I_k}^6L_x^6} \lesssim \|u_j \|_{X_{0,\frac12+}} \leq \alpha(k,\mu N)\,.
\end{equation}
For $j=1$, by Sobolev embedding, the Strichartz estimate (Lemma \ref{L:X-Strichartz}), and \eqref{E:crude-2},
\begin{equation}
\label{E:step-11}
\|u_1 \|_{L_{I_k}^6L_x^{6+}} \lesssim \|D_x^\delta u_1 \|_{L_{I_k}^6L_x^6} \lesssim \|u_1 \|_{X_{\delta, \frac12+}(I_k)} \lesssim 2^{k\delta}\,.
\end{equation}
By the interpolated Strichartz estimate (Lemma \ref{L:interp-1}), we have
\begin{equation}
\label{E:step-12}
\|w\|_{L_t^6 L_x^{6-}} \lesssim \|w\|_{X_{0,\frac12-}(I_k)}=1\,.
\end{equation}
Using \eqref{E:step-10}, \eqref{E:step-10b}, \eqref{E:step-11}, \eqref{E:step-12}, in \eqref{E:step-9},
$$
|J_\alpha|\lesssim  2^{k\delta}\alpha(k,\mu N)^2.
$$
\end{proof}

In the 2d case, we will just go ahead and assume that $N\geq 2^{3k/4}$ to reduce confusion with $\delta$'s.

\begin{lemma}[high frequency recurrence, 2d]
\label{L:iterator-2d}
Take $d=2$. Let $t_k=1-2^{-k}$ and $I_k = [0,t_k]$.   Let $u(t)$ be a solution such that \eqref{E:log-log-rescaled} holds and define
\begin{equation}
\label{E:initial-bounds-2d}
\alpha(k,N) \defeq \|P_{\geq N} u \|_{X_{0,\frac12+}(I_k)} \,.
\end{equation}
Then there exists an absolute constant $0<\mu\ll 1$ such that for $N\gtrsim 2^{3k/4}$,
\begin{equation}
\label{E:step-162}
\|P_{\geq N} (u - e^{it\Delta}u_0) \|_{X_{0,\frac12+}(I_k)} \lesssim 2^{k\delta}N^{-\frac16+\delta} \alpha(k+1,\mu N).
\end{equation}
In particular, by Lemma \ref{L:flow},
\begin{equation}
\label{E:step-163}
\alpha(k,N) \lesssim \|P_{\geq N} u \|_{L_x^2} + 2^{k\delta}N^{-\frac16+\delta} \alpha(k+1,\mu N) \,.
\end{equation}
\end{lemma}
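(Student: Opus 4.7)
The plan is to follow the template of Lemma~\ref{L:iterator}, adapted to the 2d cubic setting. First I would apply Lemma~\ref{L:flow}\eqref{E:step-101} with $\omega=2^{-k-1}$ on $I=I_k$ (regarding $I_{k+1}$ as the $\omega$-enlargement) to reduce \eqref{E:step-162} to the trilinear estimate
$$
\|P_{\geq N}(|u|^2 u)\|_{X_{0,-\frac12+}(I_{k+1})} \lesssim N^{-\frac16+\delta}\,\alpha(k+1,\mu N),
$$
absorbing the factor $\omega^{\frac12-b}\sim 2^{k\delta}$ into the allowed $2^{k\delta}$. Dualizing against $w$ with $\|w\|_{X_{0,\frac12-}(I_{k+1})}=1$, it remains to control
$$
J := \int_{I_{k+1}}\int_{\mathbb{R}^2}P_{\geq N}(u_1 u_2 u_3)\,w\,dx\,dt,
$$
where complex conjugates are suppressed and $w$ may be taken frequency-supported in $|\xi|\gtrsim N$. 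Writing each $u_j=P_{\leq \mu N}u+P_{\geq \mu N}u$ for a small absolute constant $\mu$ and discarding the purely low-frequency term (whose Fourier support lies in $|\xi|\leq 3\mu N$ and hence is killed by $P_{\geq N}$), each surviving contribution falls, after relabeling, into either Case~1 (\emph{exactly one high}) with $u_1,u_2$ frequency-localized in $|\xi|\leq \mu N$ and $u_3$ in $|\xi|\geq 8\mu N$, or Case~2 (\emph{at least two high}) with $u_2,u_3$ both frequency-localized in $|\xi|\geq \mu N$.

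For Case~1 I would pair $(u_1,w)$ and $(u_2,u_3)$, applying the interpolated bilinear Strichartz estimate (Lemma~\ref{L:interp-2}) to the former and the standard bilinear Strichartz estimate (Lemma~\ref{L:X-bilinear-Strichartz}) to the latter:
$$
|J_1| \leq \|u_1 w\|_{L^2_{I_{k+1}}L^2_x}\,\|u_2 u_3\|_{L^2_{I_{k+1}}L^2_x}.
$$
Summing the dyadic Littlewood-Paley pieces of $u_1,u_2$ via Cauchy-Schwarz, using the crude bound \eqref{E:crude-2} on the low-frequency mass (contributing $2^{k\delta}$) and the normalization $\|P_{\geq \mu N}u\|_{X_{0,\frac12+}(I_{k+1})}\leq \alpha(k+1,\mu N)$ on the high-frequency piece, should produce the advertised $N^{-\frac16+\delta}$ gain. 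For Case~2 I would apply H\"older with the 2d Strichartz-admissible pair $(4,4)$ (Lemma~\ref{L:X-Strichartz}) together with Sobolev embedding and the interpolated Strichartz estimate (Lemma~\ref{L:interp-1}) to handle the mild regularity loss of $w$, following the pattern of \eqref{E:step-10}--\eqref{E:step-12}. This yields a bound of the form $2^{k\delta}\alpha(k+1,\mu N)^2$; since \eqref{E:crude-2} gives $\alpha(k+1,\mu N)\lesssim 2^{k(1+\delta)/2}N^{-1}$ in the hypothesized range $N\gtrsim 2^{3k/4}$, the quadratic Case~2 contribution is strictly dominated by the linear Case~1 bound and is absorbed into it, which explains the single-term form of \eqref{E:step-162} (as opposed to the two-term form of \eqref{E:step-160} in 1d).

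The main obstacle will be extracting the precise gain $N^{-\frac16+\delta}$ in Case~1. Unlike in 1d, where each bilinear Strichartz pairing supplies an outright $N_{\max}^{-1/2}$, the 2d bilinear estimate only gives $(N_{\min}/N_{\max})^{1/2}$, contributing merely a constant $\mu^{1/2}$ for a low-high pair at scales $\mu N$ and $N$. A genuine $N$-gain must therefore be teased out by finely decomposing the low-frequency factors into dyadic subblocks, carefully balancing the crude bound \eqref{E:crude-2} against the normalized high-frequency piece $\alpha(k+1,\mu N)$, and summing via Cauchy-Schwarz; the arithmetic of these dyadic sums is what produces precisely the $\tfrac{1}{6}$ exponent. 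All intermediate logarithmic and $N^{\delta'}$ losses (coming from using $b=\tfrac12-\delta$ in Lemma~\ref{L:interp-2}) are absorbed into the final $\delta$ appearing in \eqref{E:step-162}.
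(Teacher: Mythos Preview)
Your overall architecture (Lemma~\ref{L:flow}, duality, Case~1/Case~2 split, bilinear Strichartz for Case~1, $L^4L^4$ Strichartz for Case~2) matches the paper, and your Case~2 treatment---bounding by $2^{k\delta}\alpha^2$ and then absorbing one factor of $\alpha$ via \eqref{E:crude-2}---is a legitimate alternative.  However, the paper does \emph{not} take the low/high threshold at $\mu N$; it takes it at the intermediate scale $N^{5/6}$.  That single choice is the main idea you are missing, and it simplifies both cases considerably.

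With the paper's decomposition $u_j = u_{j,\lo}+u_{j,\med}+u_{j,\hi}$ where $\lo=P_{\leq N^{5/6}}$ and $\hi=P_{\geq \mu N}$, Case~1 has $u_1,u_2$ supported in $|\xi|\leq N^{5/6}$.  The 2d bilinear Strichartz then gives an honest factor of $(N^{5/6}/N)^{1/2}=N^{-1/12}$ from each of $\|u_1w\|_{L^2L^2}$ and $\|u_2u_3\|_{L^2L^2}$, for a clean $N^{-1/6}$ total---no dyadic resummation of the low pieces is needed, and only the unrefined bound $\|u_j\|_{X_{0,\frac12\pm}}\lesssim 2^{k\delta}$ is used.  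In Case~2 one factor (say $u_2$) is supported in $|\xi|\geq N^{5/6}$; since $N\gtrsim 2^{3k/4}$ forces $N^{5/6}\gg 2^{k(1+\delta)/2}$, the refined part of \eqref{E:crude-2} gives $\|u_2\|_{L^4L^4}\lesssim 2^{k(1+\delta)/2}N^{-5/6}\lesssim 2^{k\delta}N^{-1/6}$ directly, so Case~2 is also \emph{linear} in $\alpha$, matching the single-term form of \eqref{E:step-162}.

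Your route instead requires a genuine dyadic decomposition of $u_1,u_2$ over all scales $N_1\leq \mu N$ and a careful use of \emph{both} branches of \eqref{E:crude-2} to get $\sum_{N_1\leq \mu N}N_1^{1/2}\|P_{N_1}u\|\lesssim 2^{k/4+k\delta}$, after which $2^{k/4}N^{-1/2}\lesssim N^{-1/6}$ for $N\gtrsim 2^{3k/4}$.  This works (and in fact the product of the two pairings then yields $N^{-1/3}$, strictly better than advertised), so your claim that the dyadic arithmetic produces ``precisely the $\tfrac16$'' is not quite right: the $\tfrac16$ in the paper is an artifact of the $N^{5/6}$ threshold, while your approach gives more but at the cost of the extra bookkeeping you flagged as the ``main obstacle.''  The paper's threshold trick simply sidesteps that obstacle.
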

\begin{proof}
By Lemma \ref{L:flow} \eqref{E:step-101} with $I=I_k$ and $\omega=2^{-k-1}$,
$$
\|P_{\geq N} (u-e^{it\Delta}u_0) \|_{X_{0,\frac12+}(I_k)} \lesssim  2^{k\delta} \|P_{\geq N} (|u|^2 u) \|_{X_{0,-\frac12+}(I_{k+1})} \,.
$$
In the remainder of the proof, we estimate the right-hand side, and for convenience take $I_{k+1}$ to be $I_k$.
By duality,
$$
\|P_{\geq N} (|u|^2 u) \|_{X_{0,-\frac12+}(I_k)} = \sup_{\|w\|_{X_{0,\frac12-}(I_k)}=1} \int_{I_k} \int_{x\in \mathbb{R}} P_{\geq N}(|u|^2u) \, w \, dx\, dt \,.
$$
Fix $w$ with $\|w\|_{X_{0,\frac12-}(I_k)}=1$ and let
$$
J \defeq \int_{I_k} \int_{x\in \mathbb{R}} P_{\geq N}(|u|^2u) \, w \, dx\, dt \,.
$$
Then $J$ can be decomposed into a finite sum of terms $J_\alpha$, each of the form (we have dropped complex conjugates, since they are unimportant in the analysis)
$$
J_\alpha \defeq \int_0^{t_k} \int_{x\in \mathbb{R}} P_{\geq N}(u_1 u_2 u_3) \, w \, dx\, dt
$$
such that each term (after a relabeling of the $u_j$, $1\leq j\leq 3$) falls into exactly one of the following two categories.\footnote{Indeed, decompose $u_j=u_{j,\lo}+u_{j,\med}+u_{j,\hi}$, where $u_{j,\lo} = P_{\leq N^{5/6}}u_j$, $u_{j,\med} = P_{N^{5/6}\leq \cdot \leq \frac1{12}N}$, and $u_{j,\hi} = P_{\geq \frac1{12}N} u_j$.  Then at least one term must be ``hi''; take it to be $u_3$.  Case 1 corresponds to $u_{1,\lo}u_{2,\lo}u_{3,\hi}$ and Case 2 corresponds to all other possibilities.  Hence, we can take $\mu=\frac{1}{12}$.}  Note that $w$ is frequency supported in $|\xi|\gtrsim N$.

\noindent\textit{\underline{Case 1} (exactly one high)}.  Both $u_1$ and $u_2$ are frequency supported in $|\xi|\leq N^{5/6}$ and $u_3$ is frequency supported in $|\xi|\geq \frac1{12}  N$.  In this case, we estimate as
$$
|J_\alpha| \lesssim \|u_1 w\|_{L_{I_k}^2L_x^2} \|u_2 u_3 \|_{L_{I_k}^2L_x^2} \,.
$$
By the interpolated bilinear Strichartz estimate (Lemma \ref{L:interp-2}),
$$
\|u_1 w\|_{L_{I_k}^2L_x^2} \lesssim (N^{5/6})^{1/2}N^{-\frac12+\delta} \|u_1\|_{X_{0,\frac12-}(I_k)} \|w\|_{X_{0,\frac12-}(I_k)} \lesssim N^{-\frac1{12}+\delta} 2^{k\delta} \,,
$$
and by Lemma \ref{L:X-bilinear-Strichartz} directly,
$$
\|u_2 u_3 \|_{L_{I_k}^2L_x^2}\lesssim (N^{5/6})^{1/2} N^{-\frac12+\delta} \|u_2 \|_{X_{0,\frac12+}(I_k)} \|u_3\|_{X_{0,\frac12+}(I_k)} \lesssim N^{-\frac1{12}+\delta} 2^{k\delta} \alpha(k,\mu N)\,.
$$
Combining yields
$$
|J_\alpha| \lesssim N^{-\frac16+\delta} 2^{k\delta} \alpha(k,\mu N) \,.
$$

\noindent\textit{\underline{Case 2} (at least two high)}.  Here we suppose that $u_2$ is frequency supported in $|\xi|\geq N^{5/6}$ and $u_3$ is frequency supported in $|\xi|\geq \mu N$; we make no assumptions about $u_1$.
Then we estimate as
$$
|J_\alpha| \lesssim \|u_1 \|_{L_{I_k}^4L_x^{4+\delta}} \|u_2\|_{L_{I_k}^4L_x^4} \|u_3\|_{L_{I_k}^4L_x^4} \|w\|_{L_{I_k}^4L_x^{4-\delta}} \,.
$$
For $u_1$, we use Sobolev embedding and \eqref{E:crude-2} to obtain
$$
\|u_1\|_{L_{I_k}^4L_x^{4+\delta}} \lesssim \|D_x^\delta u_1 \|_{L_{I_k}^4L_x^4} \lesssim \| u_1 \|_{X_{\delta, \frac12+}(I_k)} \lesssim 2^{k\delta} \,.
$$
Since $N\gtrsim 2^{3k/4}$, we have $N^{5/6} \gtrsim 2^{5k/8} \gg 2^{k(1+\delta)/2}$, and thus by Lemma \ref{L:X-Strichartz} and \eqref{E:crude-2},
\begin{align*}
\|u_2 \|_{L_{I_k}^4L_x^4}
&\lesssim 2^{k(1+\delta)/2}N^{-5/6} \\
&\lesssim (2^{k(1+\delta)} N^{-2/3}) N^{-1/6} \\
&\lesssim 2^{k\alpha}N^{-1/6}, & \text{since }N \gtrsim 2^{3k/4}\,.
\end{align*}
For $u_3$, we use  Lemma \ref{L:X-Strichartz} and \eqref{E:initial-bounds-2d} to obtain
$$
\|u_3 \|_{L_{I_k}^4L_x^4} \lesssim \alpha(k,\mu N).
$$
Combining, we obtain (changing $\delta$'s)
$$
|J_\alpha| \lesssim 2^{k\delta}N^{-1/6} \alpha(k, \mu N) \,.
$$
\end{proof}

The main result of this section is the following.  It states that high frequencies (those strictly above  $2^{3k/4}$) are $H^1$ bounded on $I_k$.  Moreover, if we subtract the linear flow, we obtain $H^{\frac43-\delta}$ boundedness for frequencies above $2^{3k/4}$ in the case $d=1$ and $H^{\frac76-\delta}$ boundedness for frequencies above $2^{3k/4}$ in the case $d=2$.\footnote{
In fact, the threshold $\geq 2^{3k/4}$, to obtain $H^1$ boundedness (but not \eqref{E:step-40}), can be replaced by $2^{k(1+\delta)/2}$ for any $\delta>0$; in the $d=1$ case, one can appeal to Lemma \ref{L:iterator} with a strictly smaller choice of $\delta$ in order to obtain a nontrivial gain upon each application of Lemma \ref{L:iterator}.  The number of applications of Lemma \ref{L:iterator} is still finite number but $\delta$-dependent.  In the 2d case, Lemma \ref{L:iterator-2d} would first need to be rewritten.  We have stated the proposition with threshold $\geq 2^{3k/4}$ because this is all that is needed in \S \ref{S:finite-speed}, and it allows us to avoid confusion with multiple small parameters.}

\begin{proposition}
\label{P:high-freq}
Let $t_k=1-2^{-k}$, $I_k = [0,t_k]$, and let $u(t)$ be a solution to \eqref{E:NLS} such that \eqref{E:log-log-rescaled} holds.  Then we have
$$
\| P_{\geq 2^{3k/4}} u(t)\|_{L^\infty_{I_k} H_x^1} \lesssim \| P_{\geq 2^{3k/4}} u(t)\|_{X_{1,\frac12+}(I_k)} \lesssim 1 \,.
$$
Moreover, we have the following regularity \emph{above} $H^1$ after the linear flow of the initial data is removed:  For any $0\leq s\leq \frac{4}{3}-\delta$ in the case $d=1$ and for any $0\leq s \leq \frac76-\delta$ in the case $d=2$, we have
\begin{equation}
\label{E:step-40}
\| P_{\geq 2^{3k/4}} (u(t)-e^{it\Delta}u_0) \|_{L_{I_k}^\infty H_x^s} \lesssim  \| P_{\geq 2^{3k/4}} (u(t)-e^{it\Delta}u_0) \|_{X_{s,\frac12+\delta}(I_k)} \lesssim 1.
\end{equation}
\end{proposition}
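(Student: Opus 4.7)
The plan is to iterate the high-frequency recurrence provided by Lemma \ref{L:iterator} (in 1d) or Lemma \ref{L:iterator-2d} (in 2d), starting from the crude a priori bound \eqref{E:crude-2}. Set $\alpha(k, N) \defeq \|P_{\geq N} u\|_{X_{0, \frac12+}(I_k)}$ and $\tilde\alpha(k, N) \defeq \|P_{\geq N}(u - e^{it\Delta}u_0)\|_{X_{0, \frac12+}(I_k)}$. The decisive observation is that on $N \geq 2^{3k/4}$ the recurrence prefactor $G(k, N) = 2^{k(1+\delta)/2} N^{-1+\delta}$ in 1d (resp.\ $G(k, N) = 2^{k\delta} N^{-1/6+\delta}$ in 2d) is bounded by $2^{-k/4 + O(k\delta)}$ (resp.\ $2^{-k/8 + O(k\delta)}$), so the iteration is strongly contractive.

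At each step we rewrite $\alpha(k+j, \mu^j N) \leq C\|P_{\geq \mu^j N} u_0\|_{L^2} + \tilde\alpha(k+j, \mu^j N)$ via Lemma \ref{L:flow}, and feed the $\tilde\alpha$-piece back into the recurrence. After $J$ iterations this yields
\[
\tilde\alpha(k, N) \lesssim \sum_{i=1}^{J} G_i(k, N)\, \|P_{\geq \mu^i N} u_0\|_{L^2} \;+\; G_J(k, N) \cdot 2^{(k+J)\delta},
\]
where $G_i \defeq \prod_{\ell=0}^{i-1} G(k+\ell, \mu^\ell N)$ and the residual is controlled by \eqref{E:crude-2}.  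In 1d, the quadratic contributions from \eqref{E:step-161} are, by \eqref{E:crude-2}, strictly smaller than the corresponding linear ones on $N \geq 2^{3k/4}$ and are absorbed accordingly.

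To deduce the $X_{s, \frac12+}$ bound on $\tilde u$ we square, weight by $N^{2s}$, and sum over dyadic $N \geq 2^{3k/4}$ (noting $\|P_N \tilde u\|_{X_{0, \frac12+}} \leq \tilde\alpha(k, N/2)$); since $u_0 \in H^1$, we use $\|P_{\geq \mu^i N} u_0\|_{L^2} \leq (\mu^i N)^{-1}\|u_0\|_{\dot H^1}$.  A direct geometric-series calculation shows the $i$-th data term contributes $\sim 2^{k(3s - i - 3)/2}$ in 1d and $\sim 2^{k(6s - i - 6)/4}$ in 2d, bounded uniformly in $k$ provided $s \leq 1 + i/3$ (1d) or $s \leq 1 + i/6$ (2d).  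The binding constraint is $i = 1$, yielding the thresholds $s \leq 4/3$ and $s \leq 7/6$ announced in the proposition; the $\delta$-loss absorbs the $\delta$'s from the Strichartz and bilinear Strichartz estimates.  A bounded $J$ (depending on $\delta$) makes the residual negligible by a parallel computation.

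The $X_{1, \frac12+}$ bound on $u$ itself then follows by combining the $s = 1$ instance of the $\tilde u$-bound (which lies within both ranges $s \leq 4/3 - \delta$ and $s \leq 7/6 - \delta$) with the flow estimate $\|e^{it\Delta} u_0\|_{X_{1, \frac12+}(I_k)} \lesssim \|u_0\|_{H^1} \lesssim 1$ from Lemma \ref{L:flow}.  The principal technical obstacle is the three-scale bookkeeping---the $2^{k/2}$-growth per iteration, the $N^{-1}$-decay per iteration, and the cutoff $N \geq 2^{3k/4}$---required to verify that the iteration is contractive, that the $i = 1$ data term precisely saturates the regularity ceiling, and that all higher-$i$ data contributions and the residual are strictly smaller.
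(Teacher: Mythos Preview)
Your proposal is correct and takes essentially the same approach as the paper: iterate the high-frequency recurrence starting from the crude bound \eqref{E:crude-2}, using that $2^{k/2}N^{-1} \lesssim N^{-1/3}$ on $N \geq 2^{3k/4}$ to make the iteration contractive, and then read off the regularity ceiling for $u - e^{it\Delta}u_0$. The paper organizes this a bit more simply by first iterating \eqref{E:step-161} to the pointwise bound $\alpha(k,N) \lesssim N^{-1}$ and then substituting once into \eqref{E:step-160} to obtain $\tilde\alpha(k,N) \lesssim 2^{k(1+\delta)/2}N^{-2+\delta} \lesssim N^{-4/3+\delta}$ (resp.\ $N^{-7/6+\delta}$ in 2d); your $i=1$ data term is exactly this final substitution, and your identification of $i=1$ as the binding constraint is the content of the paper's last line.
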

\begin{proof}
We carry out the $d=1$ case in full, which is a consequence of Lemma \ref{L:iterator}.  The $d=2$ case follows from Lemma \ref{L:iterator-2d} in a similar way.

By \eqref{E:crude-2}, we start with the knowledge that $\alpha(k,N) \lesssim 2^{k(1+\delta)/2}N^{-1}$ for $N \geq 2^{k(1+\delta)/2}$.  Note
$$
\|P_{\geq N} u_0 \|_{L_x^2}\lesssim N^{-1} \|\nabla u_0 \|_{L_x^2} \lesssim N^{-1}\,.
$$
By \eqref{E:step-161} in Lemma \ref{L:iterator},
\begin{equation}
\label{E:step-140}
\alpha(k,N) \lesssim N^{-1} +  2^{k(1+\delta)/2}N^{-1+\delta} \alpha(k+1,\mu N) \,.
\end{equation}
Application of \eqref{E:step-140} $J$ times gives
$$
\alpha(k,N) \lesssim N^{-1} \left(\sum_{j=0}^{J-1} (2^{k(1+\delta)/2}N^{-1+\delta})^j\right) + (2^{k(1+\delta)/2}N^{-1+\delta})^J \alpha(k+J,\mu^JN)\,.
$$
Since $N\geq 2^{3k/4}$, we have $2^{k/2}N^{-1} \lesssim N^{-1/3}$.  Taking $J=7$ we obtain
$$
\alpha(k,N) \lesssim N^{-1} \,.
$$
Substituting this \eqref{E:step-160} of Lemma \ref{L:iterator}, we obtain
$$
\| P_{\geq N}(u(t)-e^{it\partial_x^2}u_0) \|_{X_{0,\frac12+}(I_k)} \lesssim 2^{k(1+\delta)/2}N^{-2+\delta} \lesssim N^{-\frac43+\delta},
$$
yielding the claim.
\end{proof}

\section{Finite speed of propagation}
\label{S:finite-speed}

Recall that the main result of the last section was Prop. \ref{P:high-freq}, which showed that the solution at frequencies $\geq 2^{3k/4}$ is $H^1$ bounded on $I_k$.  This was achieved without applying any restriction in space.  In this section, we apply a spatial restriction to $|x|\geq R$ (outside the blow-up core), and study the low frequencies $\leq 2^{3k/4}$ on $I_k$.  Since frequencies of size $N$ propagate at speed $N$, and thus, travel a distance $O(1)$ over a time $N^{-1}$, we expect that frequencies of size $\lesssim 2^k$ involved in the blow-up dynamics will be incapable of exiting the blow-up core $|x|\leq R$ before blow-up time.

Since $I_k=[0,t_k]$ and $t_k=1-2^{-k}$, restricting to frequencies $\leq 2^{3k/4}$ on $I_k$, for each $k$, is effectively equivalent to inserting a time-dependent spatial frequency projection $P_{\leq (1-t)^{-3/4}}$.  The main technical Lemma \ref{L:low-freq-it} below  shows that, for $0<r_1<r_2<\infty$, the $H^s$ size of the solution in the external region $|x|\geq r_2$ is bounded by the $H^{s-\frac18}$ size of the solution in the slightly larger external region $|x|\geq r_1$.  This lemma is proved by studying the equation solved by $P_{\leq (1-t)^{-3/4}}\psi u$, where $\psi$ is a spatial cutoff.  In estimating the inhomogeneous terms of this equation, we use that the presence of the $P_{\leq (1-t)^{-3/4}}$ projection enables an exchange of $\alpha$ spatial derivatives for a factor of $(1-t)^{-3\alpha/4}$.  This is the manner in which finite-speed of propagation is implemented.  Lemma \ref{L:low-freq-it} is the main recurrence device for proving Prop. \ref{P:finite-speed-new}, giving the $H^1$ boundedness of the solution in the external region, completing the proof of Theorem \ref{T:main}.

Before getting to Lemma \ref{L:low-freq-it}, we begin by using the method of Rapha\"el \cite{Rap}, based on the use of local smoothing and \eqref{E:remainder-bounds2}, to achieve a small gain of regularity.\footnote{In the $d=1$ case, we obtain a gain of $\frac25$ derivatives in this first step, but in fact the proof could be rewritten to achieve a gain of $s<\frac12$ derivatives.  The reason $s=\frac12$ derivatives cannot be achieved in one step is the failure of the $H^{1/2}\hookrightarrow L^\infty$ embedding needed to estimate the nonlinear term.  One could achieve $\frac12$ derivatives by running the same argument twice, but this is unnecessary since we only need a small gain of $s>0$ to complete the proof of our main new Lemma \ref{L:low-freq-it}/Prop. \ref{P:finite-speed-new} below, which enables us to reach the full $s=1$ gain.   One cannot achieve a gain of $s>\frac12$ by the method employed in the proof of Lemma \ref{L:a-little-1} alone due to the term $\partial_x (\psi_R' \, u)$.}

\begin{lemma}[a little regularity, $d=1$ case]
\label{L:a-little-1}
Suppose $d=1$.  Suppose that $u(t)$ solving \eqref{E:NLS} with $H^1$ initial data satisfies \eqref{E:log-log-rescaled}.  Fix $R>0$.  Then
$$
\|\la D_x \ra^{2/5} \psi_R u\|_{L_{[0,1)}^\infty L_x^2} \lesssim 1 \,,
$$
where $\psi_R(x)=\psi(x/R)$ and $\psi(x)$ is a smooth cutoff with $\psi(x)=1$ for $|x|\geq \frac12$ and $\psi(x)=0$ for $|x|\leq \frac14$.
\end{lemma}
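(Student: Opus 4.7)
Set $v\defeq\psi_R u$. A direct computation gives that $v$ satisfies the inhomogeneous Schr\"odinger equation
\[
i\partial_t v+\partial_x^2 v=F,\qquad v(0)=\psi_R u_0\in H^1(\mathbb{R}),
\]
where $F=-\psi_R|u|^4u+2\psi_R'\,\partial_x u+\psi_R''\,u$. The plan is to control $\|\langle D_x\rangle^{2/5}v\|_{L^\infty_{[0,1)}L^2_x}$ via Duhamel's formula, combining the Strichartz bound (Lemma \ref{L:X-Strichartz}) with the 1d Kato local smoothing effect, which trades a half-derivative for $L^2_t$ integrability on a bounded spatial set. Two features drive the argument: (i) the exponential decay of $Q$ implies that $u_\core(\cdot,t)$, which concentrates at the shrinking scale $\lambda(t)$ around $x(t)\to 0$, is pointwise exponentially small on $\supp\psi_R\subset\{|x|\geq R/4\}$, so on this set $u$ may be replaced by $\tilde u$ up to harmless errors; (ii) the remainder bound \eqref{E:remainder-bounds2}, combined with 1d Gagliardo-Nirenberg, yields
\[
\|\tilde u(t)\|_{L^\infty_x}^4\lesssim\|\tilde u\|_{L^2}^2\|\partial_x\tilde u\|_{L^2}^2\lesssim\frac{1}{(1-t)\,|\log(1-t)|^{2}},
\]
which is $L^1_t$-integrable near $t=1$ via the substitution $\sigma=-\log(1-t)$; the same integrability holds for $\|\partial_x\tilde u(t)\|_{L^2_x}^2$.

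Term by term in $F$: for the quintic contribution,
\[
\|\psi_R|\tilde u|^4\tilde u\|_{L^2_x}\leq\|\tilde u\|_{L^\infty_x}^4\|\tilde u\|_{L^2_x}\in L^1_t([0,1)),
\]
and distributing $2/5$ derivatives via the fractional Leibniz rule keeps this in $L^1_tL^2_x$, so the trivial Duhamel bound delivers the $L^\infty_tH^{2/5}_x$ estimate. The boundary terms $\psi_R''u$ and $\psi_R'\partial_x u$ are supported on the fixed compact annulus $\{R/4\leq|x|\leq R/2\}$, bounded away from the blow-up point; the $\psi_R''u$ piece is immediate from \eqref{E:remainder-bounds2}, while $\psi_R'\partial_x u$, the most delicate piece, is handled by rewriting $\psi_R'\partial_x u=\partial_x(\psi_R'u)-\psi_R''u$ and absorbing the stray $\partial_x$ via Kato smoothing, paying a fraction of the half-derivative gain. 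The remaining $L^2_tL^2_x$ norm on the compact annulus is finite thanks again to \eqref{E:remainder-bounds2}.

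The main obstacle is that this scheme naturally produces a gain of $1/2$ derivative (the Kato smoothing rate), but estimating the quintic nonlinearity forces a strictly positive portion of that gain to be surrendered. Indeed, the nonlinear term requires an $L^\infty_x$ bound on $\tilde u$ in terms of norms we already control, and in one dimension the embedding $H^{1/2}(\mathbb{R})\hookrightarrow L^\infty(\mathbb{R})$ fails, so the pointwise bound must come from Gagliardo-Nirenberg interpolation between $L^2$ and $H^1$, eating a strictly positive slice of a derivative. The value $s=2/5<1/2$ is convenient rather than sharp---any $s<1/2$ can be arranged by the same scheme. The small but definite strictly positive gain achieved here is exactly what is needed to prime the main low-frequency recurrence (Lemma \ref{L:low-freq-it}) below and eventually reach the full $H^1$ conclusion of Theorem \ref{T:main}.
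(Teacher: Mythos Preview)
Your overall architecture---Duhamel, local smoothing for the commutator term $\partial_x(\psi_R' u)$, and energy estimates for the rest---matches the paper, and your treatment of the boundary pieces $\psi_R'\partial_x u$ and $\psi_R'' u$ is essentially the same. The discussion of why $s=2/5<1/2$ (failure of $H^{1/2}\hookrightarrow L^\infty$) is also correct and mirrors the paper's footnote.

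However, there is a genuine gap in the quintic term. You assert that applying $\langle D_x\rangle^{2/5}$ and the fractional Leibniz rule to $\psi_R|\tilde u|^4\tilde u$ ``keeps this in $L^1_tL^2_x$,'' but it does not. After Leibniz the derivative must land on some factor of $\tilde u$; the best available bound is the interpolation
\[
\|\langle D_x\rangle^{2/5}\tilde u\|_{L^2_x}\lesssim\|\tilde u\|_{L^2}^{3/5}\|\partial_x\tilde u\|_{L^2}^{2/5}\lesssim(1-t)^{-1/5}|\log(1-t)|^{-2/5},
\]
and combining with $\|\tilde u\|_{L^\infty}^4\lesssim(1-t)^{-1}|\log(1-t)|^{-2}$ gives a time integrand of order $(1-t)^{-6/5}|\log(1-t)|^{-12/5}$, which is \emph{not} integrable near $t=1$. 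So the ``trivial Duhamel bound'' does not close.

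The paper's remedy is an absorption argument. Writing $q=\psi_{R/2}u$, $w=\psi_R u$, the nonlinearity becomes $F_1=-|q|^4w$; one works on a late interval $I=[T_1,1)$ and arranges the Leibniz estimate so that the $2/5$-derivative falls back on $w$:
\[
\|\langle D_x\rangle^{2/5}F_1\|_{L^1_IL^2_x}\lesssim\big(\||q|^4\|_{L^1_IL^\infty_x}+\|D_x^{2/5}|q|^4\|_{L^1_IL^{5/2}_x}\big)\,\|\langle D_x\rangle^{2/5}w\|_{L^\infty_IL^2_x}.
\]
The prefactor is bounded by $\|q\|_{L^2}^2\|\partial_xq\|_{L^2}^2\in L^1_t$, with integral $\lesssim(\log(1-T_1)^{-1})^{-1}$; taking $T_1$ close to $1$ makes this coefficient small enough to absorb the right-hand side into the left. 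This absorption step is the missing ingredient in your sketch.
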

\begin{proof}
Let $w=\psi_R u$ and $q=\psi_{R/2}u$.  Then $w$ solves the equation
\begin{align*}
i\partial_t w + \partial_x^2 w
&= -|q|^4 w + 2\partial_x ( \psi_R'  \, u) - \psi_R'' \, u \\
&= F_1 +F_2 +  F_3 \,.
\end{align*}
Apply $\la D_x \ra^{2/5}$, and estimate with $I=[T_1,1)$ using the (dual) local smoothing estimate for the $F_2$ term,
$$
\|\la D_x\ra^{2/5} w \|_{L_I^\infty L_x^2} \lesssim
\begin{aligned}[t]
&\|\la D_x\ra^{2/5} w(T_1)\|_{L_x^2} + \| \la D_x\ra^{2/5}F_1 \|_{L_I^1 L_x^2} \\
&+ \| \la D_x\ra^{2/5}\la D_x\ra ^{-1/2} F_2 \|_{L_I^2 L_x^2} + \| \la D_x\ra^{2/5} F_3 \|_{L_I^1L_x^2}\,.
\end{aligned}
$$
We begin by estimating term $F_1$.  By the fractional Leibniz rule,
\begin{align*}
\|D_x^{2/5} F_1 \|_{L_I^1L_x^2}
&\lesssim \| |q|^4 \|_{L_I^1 L_x^\infty} \|D_x^{2/5} w \|_{L_I^\infty L_x^2} + \|D_x ^{2/5}|q|^4 \|_{L_I^1 L_x^{5/2}} \|w \|_{L_I^\infty L_x^{10}} \,.\\
&\lesssim (\| |q|^4 \|_{L_I^1 L_x^\infty} + \|D_x^{2/5}|q|^4 \|_{L_I^1 L_x^{5/2}}) \|D_x^{2/5} w \|_{L_I^\infty L_x^2} \,.
\end{align*}
By Sobolev/Gagliardo-Nirenberg embedding and \eqref{E:remainder-bounds2},
$$
\| |q|^4 \|_{L_x^\infty} + \| D_x^{2/5} |q|^4 \|_{L_x^{5/2}} \lesssim \|q\|_{L_x^2}^2 \|\partial_x q \|_{L_x^2}^2 \lesssim (1-t)^{-1}(\log (1-t)^{-1})^{-2}  \,.
$$
Applying the $L_I^1$ time norm, we obtain a bound by $(\log (1-T_1)^{-1})^{-1}$.  Hence,
$$
\| \la D_x\ra^{2/5} F_1 \|_{L_I^1L_x^2} \lesssim (\log (1-T_1)^{-1})^{-1} \| \la D_x\ra^{2/5} w \|_{L_I^\infty L_x^2} \,.
$$
Next, we address term $F_2$.  We have
\begin{align*}
\| \la D_x\ra^{2/5}\la D_x\ra^{-1/2} F_2 \|_{L_I^2L_x^2}
&\lesssim \| \la D_x \ra^{9/10} q \|_{L_I^2L_x^2} \\
&\lesssim \| q \|_{L_I^\infty L_x^2}^{1/10} \| \|\la \partial_x \ra q \|_{L_x^2}^{9/10} \|_{L_I^2} \,.
\end{align*}
From \eqref{E:remainder-bounds2}, we have $\| \partial_x q \|_{L_x^2} \lesssim (T-t)^{-1/2}|\log(1-t)|^{-1}$, and hence,
$$
\| \la D_x\ra^{2/5}\la D_x\ra^{-1/2} F_2 \|_{L_I^2L_x^2} \lesssim (1-T_1)^{1/10} \,.
$$
Term $F_3$ is comparatively straightforward.   Indeed, we obtain
\begin{align*}
\| \la D_x \ra^{2/5} F_3 \|_{L_I^1L_x^2}
&\lesssim \| u \|_{L_I^\infty L_x^2}^{3/5}\|  \| \la \partial_x \ra \psi_2 u \|_{L_x^2}^{2/5}\|_{L_I^1} \\
&\lesssim (1-T_1)^{4/5} \,.
\end{align*}
Collecting the above estimates, we obtain
$$
\|\la D_x \ra^{2/5} w \|_{L_I^\infty L_x^2} \lesssim \| \la D_x \ra^{2/5} w(T_1) \|_{L_x^2} + (\log(1-T_1)^{-1})^{-1} \|\la D_x \ra^{2/5} w \|_{L_I^\infty L_x^2} + (1-T_1)^{1/10} \,.
$$
By taking $T_1$ sufficiently close to $1$ so that $(\log(1-T_1)^{-1})^{-1}$ beats out the (absolute) implicit constants furnished by the estimates, we obtain
$$
\|\la D_x \ra^{2/5} w \|_{L_I^\infty L_x^2} \lesssim \| \la D_x \ra^{2/5} w(T_1) \|_{L_x^2} + (1-T_1)^{1/10} \,,
$$
which yields the claim.
\end{proof}

\begin{lemma}[a little regularity, $d=2$ case]
\label{L:a-little-2}
Suppose $d=2$.  Suppose that $u(t)$ solving \eqref{E:NLS} with $H^1$ initial data satisfies \eqref{E:log-log-rescaled}.  Fix $R>0$.  Then
$$
\|\la D_x \ra^{1/2} \psi_R u\|_{L_{[0,1)}^\infty L_x^2} \lesssim 1 \,,
$$
where $\psi_R(x)=\psi(x/R)$ and $\psi(x)$ is a smooth cutoff with $\psi(x)=1$ for $|x|\geq \frac12$ and $\psi(x)=0$ for $|x|\leq \frac14$.
\end{lemma}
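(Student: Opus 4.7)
My plan is to mirror the structure of the proof of Lemma~\ref{L:a-little-1}, replacing $\la D_x\ra^{2/5}$ by $\la D_x\ra^{1/2}$ and adapting to the 2D cubic nonlinearity. Set $w=\psi_R u$ and $q=\psi_{R/2}u$; since $\psi_{R/2}\equiv 1$ on $\supp\psi_R$, we have $\psi_R|u|^2u=|q|^2 w$, and $w$ satisfies
$$
i\partial_t w+\Delta w=-|q|^2 w+2\nabla\cdot(u\,\nabla\psi_R)-u\,\Delta\psi_R =: F_1+F_2+F_3.
$$
I would apply $\la D_x\ra^{1/2}$ and work on a short terminal interval $I=[T_1,1)$ with $T_1$ close to $1$, combining the energy estimate with the dual form of the 2D Kato local smoothing estimate (applicable in its localized form since $F_2$ is supported in the annulus $\{R/4\le|x|\le R/2\}$, and gaining $\tfrac12$ derivative to cancel the derivative carried by $F_2$).

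For $F_2$, the smoothing gain cancels the derivative exactly, reducing matters to $\|F_2\|_{L_I^2L_x^2}$. On the annulus, the ground-state core is exponentially small (because $\lambda(t)\to 0$ and $Q$ decays exponentially), so $u\approx\tilde u$ and \eqref{E:remainder-bounds2} yields $\|F_2\|_{L_x^2}\lesssim(1-t)^{-1/2}|\log(1-t)|^{-1}+1$; squaring and integrating on $I$ gives $\|F_2\|_{L_I^2L_x^2}\lesssim|\log(1-T_1)|^{-1/2}+(1-T_1)^{1/2}\to 0$ as $T_1\nearrow 1$. The term $F_3$ carries no derivatives and is treated analogously, with an even smaller bound.

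The main obstacle is $F_1=-|q|^2 w$: the 1D proof used $\||q|^4\|_{L^\infty}\lesssim\|q\|_{L^2}^2\|\partial_x q\|_{L^2}^2$, but $H^1(\mathbb{R}^2)\not\hookrightarrow L^\infty(\mathbb{R}^2)$ blocks the direct analog. My fix is to close the estimate in the Bourgain space $X_{1/2,\frac12+}(I)$, which controls $L_I^\infty H_x^{1/2}$ via Lemma~\ref{L:X-Strichartz} with the pair $(q,p)=(\infty,2)$ and the Strichartz norm $L_I^4L_x^4$ of $\la D_x\ra^{1/2}w$ via the pair $(4,4)$. The nonlinear estimate proceeds by dual Strichartz together with the Kato--Ponce fractional Leibniz rule using the splitting $L^2=L^4\cdot L^4$: with the 2D embedding $H^{1/2}\hookrightarrow L^4$ and the 2D Gagliardo--Nirenberg inequality $\|q\|_{L^4}^2\lesssim\|q\|_{L^2}\|\nabla q\|_{L^2}$, and invoking \eqref{E:remainder-bounds2} together with the exponential smallness of $u_\core$ on $\supp q$, the $|q|^2$-factor produces a time density of order $(1-t)^{-1}|\log(1-t)|^{-2}$ whose $L_I^1$-integral is $|\log(1-T_1)|^{-1}$. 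Placing the $w$-factor in $L_I^4L_x^4$ (controlled by $\|w\|_{X_{1/2,\frac12+}(I)}$ via Strichartz), the $F_1$ contribution becomes $|\log(1-T_1)|^{-\alpha}\|w\|_{X_{1/2,\frac12+}(I)}$ for some $\alpha>0$, and is absorbed into the left side by choosing $T_1$ close to $1$. Standard local well-posedness on $[0,T_1]$ controls $\|w(T_1)\|_{H^{1/2}}$, completing the argument.
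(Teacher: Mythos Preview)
Your approach is essentially the paper's, with one unnecessary detour. The paper sets up exactly the same $w$, $q$, and $F_1+F_2+F_3$ decomposition, treats $F_2$ via dual local smoothing, and handles $F_1$ by the fractional Leibniz rule together with the 2D Gagliardo--Nirenberg inequality $\|q\|_{L^4}^2\lesssim\|q\|_{L^2}\|\nabla q\|_{L^2}$, producing a small coefficient $(\log(1-T_1)^{-1})^{-1/2}$ that is absorbed on the left by taking $T_1$ close to $1$. The difference is that the paper does \emph{not} pass through $X_{1/2,\frac12+}$; it closes directly in the combined Strichartz norm
\[
\|\la D_x\ra^{1/2} w\|_{L_I^\infty L_x^2}+\|\la D_x\ra^{1/2} w\|_{L_I^4 L_x^4},
\]
with $\la D_x\ra^{1/2}F_1$ placed in the dual Strichartz space $L_I^{4/3}L_x^{4/3}$. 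This is both simpler and avoids a subtle incompatibility in your plan: duality to Lemma~\ref{L:X-Strichartz} only gives $L_t^{4/3}L_x^{4/3}\hookrightarrow X_{0,-\frac12-}$ (not $X_{0,-\frac12+}$, which is what Lemma~\ref{L:flow} needs for $b=\tfrac12+$), and the local-smoothing gain on $F_2$ controls Strichartz norms of $w$ but not $\|w\|_{X_{0,\frac12+}}$ directly. So while all of your ingredients are the right ones, the Bourgain-space bookkeeping would not close as stated; replacing it by the direct Strichartz closure---which is all you actually need, since the goal is $L_I^\infty H_x^{1/2}$ and the $L_I^4 L_x^4$ norm of $\la D_x\ra^{1/2}w$ is precisely what absorbs $F_1$---gives exactly the paper's argument.
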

\begin{proof}
Let $w=\psi_R u$ and $q=\psi_{R/2}u$, and take $\tilde \psi = \nabla_x \psi_R$ and $\tilde{\tilde \psi} = \Delta_x \psi_R$.  Then $w$ solves the equation
\begin{align*}
i\partial_t w + \Delta w
&= -|q|^2 w + 2\nabla_x \cdot ( \tilde \psi \, u) - \tilde{\tilde \psi} \, u \\
&= F_1 +F_2 +F_3\,.
\end{align*}
Apply $\la D_x\ra ^{1/2}$, and estimate with $I=[T_1,1)$ using the (dual) local smoothing estimate for the term $F_2$,
\begin{align*}
\indentalign \|\la D_x\ra ^{1/2} w \|_{L_I^\infty L_x^2} + \|\la D_x\ra ^{1/2} w \|_{L_I^4 L_x^4}  \\
&\lesssim \|\la D_x\ra ^{1/2}w_0\|_{L_x^2} + \|\la D_x\ra^{1/2} F_1 \|_{L_I^{4/3} L_x^{4/3}} + \|F_2  \|_{L_I^2 L_x^2} + \| \la D_x \ra^{1/2} F_3 \|_{L_I^1L_x^2} \,.
\end{align*}

Before we begin treating term $F_1$, let us note that \eqref{E:remainder-bounds2}, $\|\nabla q \|_{L_x^2} \lesssim (1-t)^{-1/2}(\log(1-t)^{-1})^{-1}$, and hence, $\|\nabla q\|_{L_I^2 L_x^2} \lesssim (\log(1-T_1)^{-1})^{-1/2}$.
By the fractional Leibniz rule and Sobolev/Gagliardo-Nirenberg embedding,
$$
\|D_x^{1/2} |q|^2 \|_{L_x^2} \lesssim \|D_x^{1/2} q \|_{L_x^4} \|q\|_{L_x^4} \lesssim \| q\|_{L_x^2}^{1/2} \|\nabla q \|_{L_x^2}^{3/2} \,.
$$
Hence,
\begin{equation}
\label{E:step-130}
\| D_x^{1/2} |q|^2 \|_{L_I^{4/3} L_x^2} \lesssim \| q\|_{L_I^\infty L_x^2}^{1/2} \|\nabla q\|_{L_I^2 L_x^2}^{3/2} \lesssim (\log (1-T_1)^{-1})^{-3/4}\,.
\end{equation}
Also, we have
$$
\|q \|_{L_x^4} \lesssim \|D_x^{1/2} q \|_{L_x^2} \lesssim \|q \|_{L_x^2}^{1/2}\|\nabla q\|_{L_x^2}^{1/2} \,,
$$
and hence,
\begin{equation}
\label{E:step-131}
\| q \|_{L_I^4L_x^4}^2 \lesssim \| q\|_{L_I^\infty L_x^2} \| \nabla q \|_{L_I^2L_x^2} \lesssim (\log (1-T_1)^{-1})^{-1/2}\,.
\end{equation}

Now we proceed with the estimates for term $F_1$.  By the fractional Leibniz rule (in $x$),
$$
\| \la D_x\ra^{1/2} F_1 \|_{L_I^{4/3}L_x^{4/3}} \lesssim \| \la D_x \ra^{1/2} |q|^2 \|_{L_I^{4/3} L_x^2} \|w\|_{L_I^\infty L_x^4} + \| |q|^2 \|_{L_I^2L_x^2} \|\la D_x \ra^{1/2} w \|_{L_I^4L_x^4} \,.
$$
By \eqref{E:step-130} and \eqref{E:step-131}, we obtain
$$
\| \la D_x\ra^{1/2} F_1 \|_{L_I^{4/3}L_x^{4/3}} \lesssim (\log(1-T_1)^{-1})^{-1/2} (\| \la D_x\ra^{1/2} w \|_{L_I^\infty L_x^2}+ \| \la D_x\ra^{1/2} w \|_{L_I^4 L_x^4}) \,.
$$
Next, we treat the $F_2$ term.  Again since $\|\nabla q \|_{L_x^2} \lesssim (1-t)^{-1/2}(\log(1-t)^{-1})^{-1}$,
$$
\|F_2  \|_{L_I^2 L_x^2} \lesssim (\log(1-T_1)^{-1})^{-1} \,.
$$
The $F_3$ term is comparatively straightforward.

Collecting the above estimates, we have
\begin{align*}
\indentalign
\| \la D_x\ra^{1/2} w \|_{L_I^\infty L_x^2} + \| \la D_x \ra^{1/2} w \|_{L_I^4L_x^4} \\
&\lesssim
\begin{aligned}[t]
&\| \la D_x\ra^{1/2} w(T_1) \|_{L_x^2} + (\log(1-T_1)^{-1})^{-1} \\
&+(\log(1-T_1)^{-1})^{-1/2}(\| \la D_x\ra^{1/2} w \|_{L_I^\infty L_x^2}+ \|\la D_x\ra^{1/2} w \|_{L_I^4 L_x^4}) \,.
\end{aligned}
\end{align*}
By taking $T_1$ sufficiently close to $1$, we obtain
$$
\| \la D_x\ra^{1/2} w \|_{L_I^\infty L_x^2} \lesssim \| \la D_x\ra^{1/2} w(T_1) \|_{L_x^2} + (\log(1-T_1)^{-1})^{-1} \,,
$$
which yields the claim.
\end{proof}

\begin{lemma}[low frequency recurrence]
\label{L:low-freq-it}
Let $d=1$ or $d=2$, $0<R\leq r_1<r_2$ and $\frac18\leq s\leq 1$. Let $\psi_1(x)$ and $\psi_2(x)$ be smooth radial cutoff functions such that
$$\psi_1(x) =
\begin{cases}
0 & \text{on } |x|\leq r_1 \\
1 & \text{on }|x| \geq \frac12(r_1+r_2)
\end{cases}
\qquad
 \psi_2(x) =
\begin{cases}
0 & \text{on } |x|\leq \frac12(r_1+r_2) \\
1 & \text{on }|x| \geq r_2.
\end{cases}
$$
Then
$$
\| D_x^s \psi_2 u \|_{L_{[0,1)}^\infty L_x^2} \lesssim  1 + \| \la D_x\ra ^{s-\frac18} \psi_2 u \|_{L_{[0,1)}^\infty L_x^2} \,.
$$
\end{lemma}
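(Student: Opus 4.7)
The plan is to combine Prop.~\ref{P:high-freq} for the high frequencies with a Duhamel argument for the low frequencies that exploits finite-speed-of-propagation by trading spatial derivatives for integrable time weights.  Set $N(t)\defeq(1-t)^{-3/4}$, calibrated so that $N(t)\sim 2^{3k/4}$ on the dyadic block $J_k\defeq[t_{k-1},t_k]$ with $t_k=1-2^{-k}$, and decompose
$$
D_x^s(\psi_2 u) = D_x^s P_{>N(t)}(\psi_2 u) + D_x^s P_{\leq N(t)}(\psi_2 u).
$$
Lemma~\ref{L:commutator} (together with its straightforward $H^s$-variant from the pseudodifferential calculus) gives $P_{>N(t)}(\psi_2 u)=\psi_2 P_{>N(t)}u + O_{L^2}(N(t)^{-1})$, and since $s\le 1$ and $N(t)\gtrsim 2^{3k/4}$ on $J_k$, Prop.~\ref{P:high-freq} bounds the high-frequency piece by $1$, uniformly in $t$; this accounts for the ``1'' on the right-hand side.

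For the low-frequency piece $v\defeq P_{\leq N(t)}(\psi_2 u)$, I would derive the equation
$$
i\partial_t v + \Delta v = P_{\leq N(t)}F + i[\partial_t,P_{\leq N(t)}](\psi_2 u), \qquad F\defeq -\psi_2|u|^{4/d}u + 2\nabla\psi_2\cdot\nabla u + (\Delta\psi_2)u.
$$
Since $\nabla\psi_2$ and $\Delta\psi_2$ are supported where $\psi_1\equiv 1$, the last two terms in $F$ may be rewritten in terms of $\psi_1 u$, with the divergence-form term placed against dual local smoothing.  The nonlinear term factors as $\psi_2|u|^{4/d}u = \psi_2\,|\psi_1 u|^{4/d}(\psi_1 u)$, again involving only $\psi_1 u$.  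Applying $D_x^s$ and Lemma~\ref{L:flow} in Duhamel form, the key gain is the operator bound
$$
\|D_x^{1/8}P_{\leq N(\sigma)}\|_{L^2\to L^2} \lesssim N(\sigma)^{1/8} = (1-\sigma)^{-3/32},
$$
a time weight integrable over $[0,1)$.  This permits the trade
$$
\|D_x^s P_{\leq N(\sigma)}F(\sigma)\|_{L_x^2}\lesssim (1-\sigma)^{-3/32}\|\la D_x\ra^{s-1/8}F(\sigma)\|_{L_x^2},
$$
where the latter is estimated by $\|\la D_x\ra^{s-1/8}\psi_1 u\|_{L_x^2}$ via the fractional Leibniz rule, with the $|\psi_1 u|^{4/d}$ coefficient controlled by Sobolev/Gagliardo--Nirenberg together with the ``little regularity'' gains of Lemmas~\ref{L:a-little-1}/\ref{L:a-little-2} (which supply $L^\infty$ control of $u$ on the external region well beyond what mass conservation alone provides).

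The main obstacle is the time-derivative commutator $[\partial_t,P_{\leq N(t)}](\psi_2 u)$: its symbol is Fourier-localized to frequencies of size $N(t)$ with amplitude $N'(t)/N(t)\sim(1-t)^{-1}$, which is only logarithmically integrable.  I would circumvent this by replacing the continuous cutoff with the piecewise-constant cutoff $P_{\leq 2^{3k/4}}$ on each $J_k$, running Lemma~\ref{L:flow} block-by-block and summing in $k$ -- the per-block gain $2^{-3k/32}$ from the derivative trade produces a convergent geometric series.  Finally, to reconcile the appearance of $\psi_1 u$ in the internal estimates with the $\psi_2 u$ appearing in the stated bound, one splits $(\psi_1-\psi_2)u$ (supported in the annulus $r_1\le |x|\le (r_1+r_2)/2$) by frequency: its high-frequency part is again handled by Prop.~\ref{P:high-freq}, while its low-frequency part is absorbed via a nested cutoff argument into $\|\la D_x\ra^{s-1/8}\psi_2 u\|_{L_x^2}+1$.
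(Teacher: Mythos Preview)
Your overall architecture matches the paper's: a time-dependent projection at scale $(1-t)^{-3/4}$, the derivative-for-time-weight trade \eqref{E:exchange}, Prop.~\ref{P:high-freq} for the high-frequency remainder, and Lemmas~\ref{L:a-little-1}/\ref{L:a-little-2} to close the nonlinear term.  The treatment of the forcing $F$ is essentially the paper's, though the paper writes the commutator term $2\nabla\psi_2\cdot\nabla u+(\Delta\psi_2)u$ in divergence form and trades a full $9/8$ derivatives through $P_-$ to produce the $s-\tfrac18$ on the right, rather than invoking dual local smoothing.

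The genuine gap is your handling of the time commutator.  Replacing $P_{\leq N(t)}$ by a piecewise-constant projection $P_{\leq 2^{3k/4}}$ on $J_k$ and running Duhamel does not eliminate the difficulty; it relocates it to the jump at each interface $t_k$, namely
\[
\big\| D_x^s\big(P_{\leq 2^{3(k+1)/4}}-P_{\leq 2^{3k/4}}\big)(\psi_2 u)(t_k)\big\|_{L_x^2}
\;\sim\; \big\| P_{2^{3k/4}}\,\nabla(\psi_2 u)(t_k)\big\|_{L_x^2}\quad (s=1).
\]
Telescoping forces you to control the $\ell^1_k$ sum of these.  There is no ``per-block gain $2^{-3k/32}$'' here: the jump is a pointwise-in-time quantity with no $\int_{J_k}$ to supply smallness.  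Splitting $u=e^{it\Delta}u_0+(u-e^{it\Delta}u_0)$, the second piece \emph{is} $\ell^1_k$-summable thanks to the extra regularity in \eqref{E:step-40}, but the linear-flow piece contributes $\sum_k\|P_{2^{3k/4}}\nabla u_0\|_{L^2}$, which is not bounded by $\|\nabla u_0\|_{L^2}$ for generic $u_0\in H^1$.

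The paper's remedy is to abandon Duhamel in favor of the energy method.  Pairing the equation for $w=P_-\psi_2 u$ with $D_x^{2s}w$ turns the commutator term into
\[
\int_0^1\big|\langle D_x^s F_1,D_x^s w\rangle\big|\,dt
\;\lesssim\;\sum_k \big\|P_{2^{3k/4}}\nabla(\psi_2 u)\big\|_{L^\infty_{J_k}L^2}^{\,2},
\]
an $\ell^2_k$ sum.  Now the linear-flow contribution is $\sum_k\|P_{2^{3k/4}}\nabla u_0\|_{L^2}^2\lesssim\|\nabla u_0\|_{L^2}^2$ by Littlewood--Paley orthogonality, and the difference $u-e^{it\Delta}u_0$ is handled by \eqref{E:step-40} as before.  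This $\ell^1$-versus-$\ell^2$ distinction is exactly why the energy method is used here (and again for the analogous $F_1$ term in Lemma~\ref{L:low-freq-iterator}; cf.\ the footnote there).
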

\begin{proof}
Let $\chi(\rho)$ be a smooth function such that $\chi(\rho)=1$ for $|\rho|\leq 1$ for $\chi(\rho)=0$ for $|\rho|\geq 2$.  Let $P_-=P_{\leq (T-t)^{-3/4}}$ be the time-dependent multiplier operator defined by $\widehat{Pf}(\xi) = \chi((T-t)^{3/4}|\xi|) \hat f(\xi)$ (where the Fourier transform is in space only).  Note that the Fourier support of $P$ at time $t_k=1-2^{-k}$ is $\lesssim 2^{3k/4}$.  We further have that
$$
\partial_t P_- f = \frac34i(1-t)^{-1/4}QD_x f + P\partial_t f \,,
$$
where $Q=Q_{(1-t)^{-3/4}}$ is the time-dependent multiplier
$$
\widehat{Qf}(\xi) = \chi'((1-t)^{3/4}|\xi|) \hat f(\xi) \,.
$$
Note that the Fourier support of $Q$ at time $t_k=1-2^{-k}$ is $\sim 2^{3k/4}$.  Note also that if $g=g(x)$ is any function, then
\begin{equation}
\label{E:exchange-2}
\| PD_x^\alpha g \|_{L_x^2} \leq (1-t)^{-3\alpha/4} \|g\|_{L_x^2} \,.
\end{equation}
Let $w=P_-\psi_2 u$.  Taking $\tilde \psi_2 = \nabla_x \psi_2$ and $\tilde{\tilde \psi}_2 = \Delta_x \psi_2$, we have
\begin{align*}
i\partial_t w + \Delta w &=
\begin{aligned}[t]
&-i (1-t)^{-1/4} Q \cdot \nabla_x \; w - P_- \psi_2 |u|^{4/d} u \\
&+ 2 P_- \nabla_x \cdot[\tilde \psi_2 u]  - P_- \tilde{\tilde\psi}_2 u
\end{aligned}\\
&= F_1+F_2+F_3+F_4\,.
\end{align*}
By the energy method,
$$
\|D_x^s w\|_{L_{[0,1)}^\infty L_x^2}^2 \lesssim \|D_x^s w(0)\|_{L_x^2}^2 + \int_0^1 | \la D_x^s F_1(s), D_x^s w(s) \ra_{L_x^2} | \, ds + 10\sum_{j=2}^4 \| D_x^s F_j \|_{L_{[0,1)}^1 L_x^2}^2 \,.
$$
For term $F_1$, we argue as follows.  Let $\tilde Q$ be a projection onto frequencies of size $(1-t)^{-3/4}$.  Then
$$
\int_0^1 | \la D_x^s F_1(s), D_x^s w(s) \ra_{L_x^2} | \, ds \lesssim \int_0^1 (1-s)^{-1/4} \| D_x^{\frac12+s} \tilde Q \psi_2 u(s) \|_{L_x^2}^2 \, ds\,.
$$
Applying \eqref{E:exchange-2} with $\alpha=\frac12$, we can control the above by
$$
\int_0^1 (1-s)^{-1} \| D_x^s \tilde Q \psi_2 u(s) \|_{L_x^2}^2 \, ds.
$$
Dividing the time interval $[0,1) = \cup_{k=1}^\infty [t_k, t_{k+1})$, we bound the above by
$$
\sum_{k=1}^{+\infty} 2^k \int_{t_k}^{t_{k+1}} \| D_x^s P_{2^{3k/4}} \psi_2 u(s) \|_{L_x^2}^2 \, ds \lesssim \sum_{k=1}^{+\infty} \| D_x^s P_{2^{3k/4}} \psi_2 u(s) \|_{L_{[t_k,t_{k+1})}^\infty L_x^2}^2,
$$
where $P_{2^{3k/4}}$ is the projection onto frequencies of size $\sim 2^{3k/4}$ (and not $\lesssim 2^{3k/4}$).  However, writing $u(t)=e^{it\Delta}u_0 + (u(t)-e^{it\Delta}u_0)$, the above is controlled by (taking $s=1$, the worst case)
$$
\sum_{k=1}^\infty \|\nabla_x P_{2^{3k/4}} u_0 \|_{L_x^2}^2 + \sum_{k=1}^{+\infty} \|\nabla_x P_{2^{3k/4}} (u(t)-e^{it\Delta}u_0) \|_{L_x^2}^2\,.
$$
By Prop. \ref{P:high-freq}, \eqref{E:step-40},
$$
\| \nabla_x u_0 \|_{L_x^2}^2 + \sum_{k=1}^{+\infty} 2^{-k/8} \lesssim 1 \,.
$$
In conclusion for term $F_1$ we obtain
$$
\int_0^1 | \la D_x^s F_1(s), D_x^s w(s) \ra_{L_x^2} | \, ds  \lesssim 1 \,.
$$

We next address term $F_2$.  Insert $\psi_2 \psi_1^{\frac4d+1} = \psi_2$, then apply \eqref{E:exchange-2} with $\alpha=s$ to obtain (in the worst case $s=1$),
$$
\| D_x^s F_2 \|_{L_{[0,1)}^1 L_x^2} \lesssim \| (1-t)^{-3/4}\psi_2 | u|^{4/d} u \|_{L_{[0,1)}^1 L_x^2} \lesssim \| (1-t)^{-3/4} \|\psi_1 u \|_{L_x^{2(\frac{4}{d}+1)}}^{\frac{4}{d}+1} \|_{L_{[0,1)}^1} \,.
$$
We consider the cases $d=1$ and $d=2$ separately.  When $d=1$,
$$
\|\psi_1 u \|_{L_x^{10}} \lesssim \|D_x^{2/5} \psi_1 u \|_{L_x^2} \lesssim 1 \,,
$$
by Lemma \ref{L:a-little-1}.  Consequently,
$$
\|D_x^s F_2 \|_{L_{[0,1)}^1 L_x^2} \lesssim \|(1-t)^{-3/4}\|_{L_{[0,1)}^1} \lesssim 1\,.
$$
On the other hand, when $d=2$, we have
$$
\| \psi_1 u \|_{L_x^6} \lesssim \|D_x^{2/3} \psi_1 u \|_{L_x^2} \lesssim \|D_x^{1/2} \psi_1 u \|_{L_x^2}^{2/3} \|\nabla_x \psi_1 u \|_{L_x^2}^{1/3} \lesssim (1-t)^{-1/6}
$$
by Lemma \ref{L:a-little-2} and \eqref{E:remainder-bounds2}.  Consequently,
$$
\| D_x^s F_2 \|_{L_{[0,1)}^1 L_x^2} \lesssim \| (1-t)^{-3/4} (1-t)^{-1/6} \|_{L_{[0,1)}^1} \lesssim 1 \,.
$$
Next, we address term $F_3$.  By \eqref{E:exchange-2} with $\alpha=\frac98$,
$$
\|D_x^s F_3 \|_{L_{[0,1)}^1L_x^2} \lesssim \|(1-t)^{-27/32} \|_{L_{[0,1)}^1} \|D_x^{s-\frac18} (\tilde \psi_2 u ) \|_{L_{[0,1)}^\infty L_x^2} \,.
$$
Since $\|(1-t)^{-27/32} \|_{L_{[0,1)}^1} \sim 1$ and the support of $\tilde \psi_2$ is contained in the set where $\psi_1=1$, we have
$$
\|D_x^s F_3 \|_{L_{[0,1)}^1L_x^2} \lesssim  \|\la D_x\ra^{s-\frac18} \psi_1 u \|_{L_{[0,1)}^\infty L_x^2} \,.
$$
Finally, we consider $F_4$.  We have
\begin{align*}
\|D_x^s F_4 \|_{L_{[0,1)}^1 L_x^2}
&\lesssim \| \la \nabla_x \ra P_- \psi_1 u \|_{L_{[0,1)}^1 L_x^2} \\
&\lesssim \| (1-t)^{-3/4} \|_{L_{[T_1,1)}^1} \|u\|_{L_{[0,1)}^\infty L_x^2} \\
&\lesssim 1
\end{align*}
by \eqref{E:exchange-2} with $\alpha=1$.
\end{proof}

The next proposition completes the proof of  Theorem \ref{T:main}.

\begin{proposition}
\label{P:finite-speed-new}
Suppose that $u(t)$ solving \eqref{E:NLS} with $H^1$ initial data satisfies \eqref{E:log-log-rescaled}.  Fix $R>0$.  Then
$$
\| u\|_{L_{[0,1)}^\infty H_{|x|\geq R}^1} \lesssim 1 \,.
$$
\end{proposition}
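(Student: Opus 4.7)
The plan is to iterate Lemma \ref{L:low-freq-it} a finite number of times, starting from the fractional-regularity base case provided by Lemmas \ref{L:a-little-1} (in $d=1$) or \ref{L:a-little-2} (in $d=2$). Each application of Lemma \ref{L:low-freq-it} buys us an additional $\frac{1}{8}$ of a spatial derivative, at the cost of slightly shrinking the external region (i.e.\ we must bound the $H^{s-1/8}$ norm on a region with smaller inner radius in order to conclude an $H^s$ bound on a region with slightly larger inner radius). Since the gain $\frac{1}{8}$ is a fixed positive amount and the starting regularity is $H^{2/5}$ or $H^{1/2}$, only finitely many steps are required to reach $H^1$.

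Concretely, I would fix $R>0$ and choose an integer $n$ large enough so that $\frac{2}{5} + \frac{n}{8} \geq 1$ (in $d=1$; in $d=2$ use $\frac{1}{2}$ in place of $\frac{2}{5}$); for instance $n=5$ works in $d=1$ and $n=4$ works in $d=2$. Then pick a nested increasing sequence of radii
\[
\tfrac{R}{2} < \rho_0 < \rho_1 < \cdots < \rho_n = R,
\]
with corresponding smooth radial cutoffs $\psi^{(j)}(x)$ satisfying $\psi^{(j)}(x)=0$ for $|x|\leq \rho_{j-1}$ and $\psi^{(j)}(x)=1$ for $|x|\geq \rho_j$, so that each pair $(\psi^{(j)},\psi^{(j+1)})$ plays the role of $(\psi_1,\psi_2)$ in Lemma \ref{L:low-freq-it} with $r_1 = \rho_{j-1}$ and $r_2 = \rho_j$.

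The base case is Lemma \ref{L:a-little-1} or \ref{L:a-little-2}, which yields
\[
\|\langle D_x \rangle^{s_0} \psi^{(0)} u \|_{L_{[0,1)}^\infty L_x^2} \lesssim 1,
\qquad s_0 \in \{\tfrac{2}{5}, \tfrac{1}{2}\}.
\]
I would then inductively apply Lemma \ref{L:low-freq-it} with $s = s_0 + \frac{j}{8}$ and with $(\psi_1,\psi_2)=(\psi^{(j-1)},\psi^{(j)})$, obtaining
\[
\|D_x^{s_0+j/8} \psi^{(j)} u\|_{L_{[0,1)}^\infty L_x^2} \lesssim 1 + \|\langle D_x\rangle^{s_0+(j-1)/8} \psi^{(j-1)} u\|_{L_{[0,1)}^\infty L_x^2} \lesssim 1,
\]
where the last inequality uses the previous step of the induction. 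After $n$ such steps we obtain $\|\langle D_x\rangle \psi^{(n)} u\|_{L_{[0,1)}^\infty L_x^2}\lesssim 1$, and since $\psi^{(n)}\equiv 1$ on $|x|\geq R$, this yields the claim.

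The only real obstacle is bookkeeping: we must verify that the nested cutoffs at each level of the induction continue to satisfy the geometric hypotheses of Lemma \ref{L:low-freq-it} (namely $R\leq \rho_{j-1} <\rho_j$), and that the implicit constants in Lemmas \ref{L:a-little-1}, \ref{L:a-little-2} and \ref{L:low-freq-it} depend only on $R$ (or on the chosen cutoffs) and the already-controlled quantities from Section \ref{S:high-frequency}. Since $n$ is an absolute finite integer independent of $t$, and each of the $n$ constants is absorbed into the final $\lesssim 1$, the induction terminates and the proof is complete.
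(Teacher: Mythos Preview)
Your approach is correct and essentially identical to the paper's own proof, which reads in its entirety: ``Iterate Lemma \ref{L:low-freq-it} eight times on successively larger external regions.'' The only cosmetic difference is that you invoke Lemmas \ref{L:a-little-1}/\ref{L:a-little-2} as an explicit base case (starting the iteration at $s_0=\tfrac{2}{5}$ or $\tfrac{1}{2}$), whereas the paper starts directly from mass conservation (so that the first application of Lemma \ref{L:low-freq-it} is at $s=\tfrac18$, with the right-hand side bounded by $\|\psi_1 u\|_{L^2}\lesssim 1$), requiring eight steps rather than four or five; either route works.
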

\begin{proof}
Iterate Lemma \ref{L:low-freq-it} eight times on successively larger external regions.
\end{proof}

Prop. \ref{P:finite-speed-new} completes the proof of Theorem \ref{T:main}.

\section{Application to 3d standing sphere blow-up}
\label{S:3d-sphere}

We now outline the proof of Theorem \ref{T:main-2} utilizing the techniques of \S\ref{S:high-frequency}--\ref{S:finite-speed}.  Theorem \ref{T:main-2} pertains to radial solutions of \eqref{E:NLS-5}.
We define the initial data set $\mathcal{P}$ as in\footnote{We are considering the case dimension $d=3$ (in their notation $N=3$).} Rapha\"el-Szeftel \cite{RS}, Def. 1 on p. 980-981, except that condition (v) is replaced by $\|\tilde u_0 \|_{H^1(|r-1|\geq \frac1{10})} \leq \epsilon^5$.    The goal then becomes to complete the proof of the bootstrap Prop. 1 on p. 982, where the ``improved regularity estimates'' (35)-(36)-(37) are effectively replaced with
$$
\|u(t)\|_{L_{[0,t_1]}^\infty H_{|x|\leq \frac12}^1} \leq \epsilon \,.
$$

Let us formulate a more precise statement:

\begin{proposition}[partial bootstrap argument]
\label{P:bootstrap}
Let $Q$ be the 1d ground state given by \eqref{E:ground-state-1d}, and let $\epsilon>0$, $T>0$ be fixed with $T\leq \epsilon^{200}$.  Suppose that $u(t)$ is a radial 3d solution to
$$
i\partial_t u + \Delta u + |u|^4 u=0
$$
on an interval $[0,T'] \subset [0,T)$ such that the following ``bootstrap inputs'' hold:
\begin{enumerate}
\item
\label{BSI-1}
There exist parameters $\lambda(t)>0$,
$\gamma(t)\in \mathbb{R}$, and $|r(t)-1|\leq \frac1{10}$, such that if we define
\begin{equation}
\label{E:tilde-u-def}
\tilde u(r,t) = u(r,t) - \frac{1}{\lambda(t)^{1/2}} Q\left( \frac{r-r(t)}{\lambda(t)} \right),
\end{equation}
then, for $0\leq t \leq T'$,
\begin{equation}
\label{E:step-82}
\|\nabla u(t)\|_{L_x^2} = \lambda(t)^{-1} \sim \left( \frac{\log|\log(T-t)|}{T-t} \right)^{1/2} \,,
\end{equation}
and
\begin{equation}
\label{E:bs-error-bd}
\|\nabla \tilde u(t) \|_{L_x^2} \lesssim \frac{1}{|\log(T-t)|^{1+}(T-t)^{1/2}} \,.
\end{equation}
\item
\label{BSI-3} Interior Strichartz control:
$\| \la \nabla \ra u(t) \|_{L_{[0,T']}^5 L_{|x|\leq \frac12}^{30/11}} \leq \epsilon$.
\item
\label{BSI-4} Initial data remainder control:
$\|\la \nabla \ra \tilde u_0\|_{L_x^2} \leq \epsilon^5$.
\end{enumerate}
Then we have the following ``bootstrap output''
\begin{equation}
\label{BSO}
\| \la \nabla \ra u(t) \|_{L_{[0,T']}^\infty L_{|x|\leq \frac12}^2} + \| \la \nabla \ra u(t) \|_{L_{[0,T']}^5 L_{|x|\leq \frac12}^{30/11}} \lesssim \epsilon^5 \,.
\end{equation}
\end{proposition}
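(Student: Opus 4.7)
The plan is to run a spatially localized Strichartz bootstrap on the interior region $|x|\leq \tfrac12$, exploiting three features of the setup: since $r(t)\in[\tfrac{9}{10},\tfrac{11}{10}]$ and $\lambda(t)\leq \lambda(0)\lesssim T^{1/2}|\log T|^{1/2}\ll \epsilon^{100}$, the exponential decay of $Q$ forces $u_\core$ and $\nabla u_\core$ to be smaller than $\epsilon^N$ (for any fixed $N$) pointwise on $|x|\leq\tfrac{3}{4}$; the initial-data smallness $\|\la\nabla\ra\tilde u_0\|_{L^2}\leq \epsilon^5$ from BSI-\ref{BSI-4} supplies a baseline $\epsilon^5$; and the bootstrap input BSI-\ref{BSI-3} contributes four extra factors of $\epsilon$ when fed into the quintic nonlinearity.

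I would introduce a smooth radial cutoff $\chi$ with $\chi\equiv 1$ on $|x|\leq\tfrac12$ and $\supp\chi\subset\{|x|\leq\tfrac{3}{4}\}$, and study $w=\chi u$, which satisfies
\[
i\partial_t w + \Delta w = -\chi|u|^4 u + 2\nabla\chi\cdot\nabla u + (\Delta\chi)u.
\]
The inhomogeneous Strichartz estimate with one derivative and admissible target pair $(5,30/11)$ reduces \eqref{BSO} to bounding $\|\la\nabla\ra w(0)\|_{L^2}$, the nonlinear source $\|\la\nabla\ra(\chi|u|^4u)\|_{L^{5/4}_tL^{30/19}_x}$, and the commutator source $\|\la\nabla\ra(2\nabla\chi\cdot\nabla u+(\Delta\chi)u)\|_{L^{5/4}_tL^{30/19}_x}$. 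The initial data splits as $\chi\tilde u_0+\chi u_\core(0)$ and is at most $\epsilon^5$ by BSI-\ref{BSI-4} and the exponential smallness of the core. The commutator is supported on the annulus $\tfrac12\leq|x|\leq\tfrac{3}{4}$, where once again $u_\core$ is negligible; the remaining $\tilde u$-contribution is controlled using \eqref{E:bs-error-bd} and the short time window $T\leq\epsilon^{200}$, producing a contribution well below $\epsilon^5$.

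The heart of the proof is the nonlinear estimate. The H\"older exponents balance as $4\cdot\tfrac{1}{5}+0=\tfrac{4}{5}$ in time and $4\cdot\tfrac{1}{30}+\tfrac{1}{2}=\tfrac{19}{30}$ in space, so
\[
\|\chi\,|u|^4\nabla u\|_{L^{5/4}_tL^{30/19}_x} \lesssim \|\tilde\chi u\|_{L^5_tL^{30}_x}^{4}\,\|\nabla(\tilde\chi u)\|_{L^\infty_tL^2_x},
\]
with $\tilde\chi\equiv 1$ on $\supp\chi$. By the Sobolev embedding $\dot W^{1,30/11}(\mathbb{R}^3)\hookrightarrow L^{30}(\mathbb{R}^3)$, the four $L^5_tL^{30}_x$ factors are controlled by BSI-\ref{BSI-3} (after transfer from $|x|\leq\tfrac12$ to $\supp\tilde\chi$, see below) giving $\epsilon^4$, while the last factor is the bootstrap quantity $K$ that we seek to bound. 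The variants in which the derivative lands on one of the four undifferentiated factors are handled analogously. Combined with the initial-data and commutator estimates, we obtain $K\lesssim \epsilon^5+\epsilon^4K$, which closes for $\epsilon$ sufficiently small and yields $K\lesssim \epsilon^5$.

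The main obstacle is the transfer of BSI-\ref{BSI-3} from its native region $|x|\leq\tfrac12$ to the slightly larger $\supp\tilde\chi\subset\{|x|\leq\tfrac{3}{4}\}$. This is the 3d radial-sphere analogue of Lemma~\ref{L:low-freq-it}, and requires splitting $\tilde\chi u$ into low frequencies $\lesssim (T-t)^{-3/4}$, which by finite speed of propagation essentially only see $\tilde u$ (since the core is negligible by the exponential estimate above) and are controlled by a short-time Strichartz cycle starting from the small data bound, and high frequencies $\gtrsim (T-t)^{-3/4}$, which by the 3d analogue of Proposition~\ref{P:high-freq} are bounded in $H^1$ by the initial data and an exponentially small core contribution. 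Iterating this recurrence a finite number of times, in the spirit of Prop.~\ref{P:finite-speed-new}, transfers the $\epsilon$-smallness of BSI-\ref{BSI-3} from $|x|\leq\tfrac12$ to $\supp\tilde\chi$, completing the proof.
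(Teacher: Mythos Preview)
There is a genuine gap in the commutator estimate.  You place the source $2\nabla\chi\cdot\nabla u+(\Delta\chi)u$ in the dual Strichartz norm $L_t^{5/4}L_x^{30/19}$ \emph{after} applying $\la\nabla\ra$.  The term $\la\nabla\ra(\nabla\chi\cdot\nabla u)$ then contains $\nabla\chi\cdot\nabla^2 u$, and you are asking for control of $\nabla^2 u$ (equivalently $\nabla^2\tilde u$, since the core is negligible) on the annulus $\tfrac12\leq|x|\leq\tfrac34$.  The hypothesis \eqref{E:bs-error-bd} only gives $\|\nabla\tilde u(t)\|_{L^2_x}$, not any second-derivative control; the short time window $T\leq\epsilon^{200}$ does not help because no power of $T-t$ can compensate for a missing derivative.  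Switching the commutator to $L_t^1L_x^2$ does not help either, and even using dual local smoothing (which gains $\tfrac12$ derivative) you would still need $\|u\|_{L_t^2 H^{3/2}}$ on the annulus, which is strictly beyond $H^1$.

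This is precisely why the paper's argument is as long as it is.  Before the Strichartz step can be closed, one must first establish $H^1$ boundedness of $v=ru$ on a slightly larger interior region (Prop.~\ref{P:ring-energy}) via the full high-/low-frequency machinery (Lemma~\ref{L:iterator-ring}, Prop.~\ref{P:high-freq-ring}, Lemma~\ref{L:extra-high-freq}, Lemma~\ref{L:low-freq-iterator}), and then upgrade this to an $L_t^2 H_r^{3/2}$ bound on a yet smaller region via a local-smoothing energy argument (Prop.~\ref{P:ring-loc-smooth}).  Only with that $H^{3/2}$ input in hand can the commutator $\partial_r(\psi'v)$ be placed in the dual local-smoothing norm and the Strichartz estimate closed (Prop.~\ref{P:ring-Strichartz}).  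Your final paragraph gestures at the high/low-frequency split, but deploys it only for the ``transfer'' of BSI-\ref{BSI-3} to the larger region, not for the derivative deficit in the commutator; and the separate $H^{3/2}$ local-smoothing step is absent entirely.  As a secondary issue, the transfer itself is problematic: the low-frequency recurrence (Lemma~\ref{L:low-freq-it}/\ref{L:low-freq-iterator}) trades regularity for spatial extent \emph{toward} the singularity, so it controls a smaller interior region at higher $s$ in terms of a larger one at lower $s$ --- it does not propagate the Strichartz smallness outward from $|x|\leq\tfrac12$ to $|x|\leq\tfrac34$ as you need.
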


The goal of this section is to prove Prop. \ref{P:bootstrap}, which shows that the bootstrap input \eqref{BSI-3} is reinforced.  Prop. \ref{P:bootstrap} is, however, an incomplete bootstrap and by itself does not establish Theorem \ref{T:main-2}.  The analysis which uses \eqref{BSO} to reinforce the bootstrap assumption \eqref{BSI-1} is rather elaborate but will be omitted here as it follows the arguments in Rapha\"el \cite{Rap} and Rapha\"el-Szeftel \cite{RS}.  Moreover, these papers demonstrate how the assertions in Theorem \ref{T:main-2} follow.

The proof of Prop. \ref{P:bootstrap} follows the methods developed in \S\ref{S:high-frequency}--\ref{S:finite-speed} used to prove Theorem \ref{T:main}.  We do not, however, rescale the solution so that $T=1$ as was done in \S\ref{S:high-frequency}.

\begin{remark}
\label{R:bs-notation}
Let us list some notational conventions for the rest of the section.
We take $t_k=T-2^{-k}$ and denote $I_k=[0,t_k]$.  Let $v(r,t)=ru(r,t)$, and consider $v$ as a 1d function in $r$ extended to $r<0$ as an odd function.  Note that $v$ solves
$$
i\partial_t v + \partial_r^2 v = -r^{-4}|v|^4v \,.
$$
The frequency projection $P_N$ will always refer to the 1d frequency projection in the $r$-variable.  The Bourgain norm $\|v\|_{X_{s,b}}$ refers to the 1d norm in the $r$-variable.

Let $\lambda_0=\lambda(0)$ and take $k_0\in \mathbb{N}$ such that $2^{-k_0/2}(\log k_0)^{-1/2} \sim \lambda_0$.  We then have $T\sim 2^{-k_0}$.  The assumption $T\leq \epsilon^{40}$ equates to $2^{-k_0/8} \leq \epsilon^5$.  Note that $\lambda(t_k) = 2^{-k/2}(\log k)^{-1/2}$.
\end{remark}

\begin{lemma}[smallness of initial--data]
\label{L:initial-data}
Under the assumption \eqref{BSI-4} in Prop. \ref{P:bootstrap} on the initial data, and with $v_0=ru_0$, we have
$$
\|P_{\geq 2^{3k_0/4}} \partial_r v_0\|_{L_r^2} + \| \partial_r v_0 \|_{L_{r\leq \frac12}^2} \lesssim \epsilon^5 \,.
$$
\end{lemma}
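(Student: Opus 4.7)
The plan is to split $u_0 = u_\core(0) + \tilde u_0$ with $u_\core(0,r) = \lambda_0^{-1/2} Q((r-r(0))/\lambda_0)$, so that $v_0 = v_\core + \tilde v_0$ where $v_\core(r) = r\, u_\core(0,r)$ and $\tilde v_0(r) = r\, \tilde u_0(r)$ (each extended oddly to $r<0$). I then estimate each of the four pieces separately. The $\tilde v_0$ contributions will inherit $\epsilon^5$ smallness directly from hypothesis \eqref{BSI-4}, while the $v_\core$ contributions will be super-polynomially small in $\lambda_0^{-1}$ and therefore absorbed into $\epsilon^5$ via the standing hypothesis $2^{-k_0/8} \leq \epsilon^5$.

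For the remainder piece $\tilde v_0$: the identity $\partial_r(r \tilde u_0) = \tilde u_0 + r \partial_r \tilde u_0$ combined with the integration-by-parts calculation
\[
\int_0^\infty |\partial_r(r\tilde u_0)|^2\, dr \;=\; \int_0^\infty r^2|\partial_r \tilde u_0|^2\, dr \;=\; \tfrac{1}{4\pi}\|\nabla \tilde u_0\|_{L^2(\mathbb{R}^3)}^2
\]
gives $\|\partial_r \tilde v_0\|_{L^2(\mathbb{R})} \lesssim \|\nabla \tilde u_0\|_{L^2(\mathbb{R}^3)} \leq \epsilon^5$.  Since the Littlewood--Paley projection is bounded on $L^2$, the high-frequency part satisfies $\|P_{\geq 2^{3k_0/4}}\partial_r \tilde v_0\|_{L^2_r} \lesssim \epsilon^5$.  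For the local piece $\|\partial_r \tilde v_0\|_{L^2_{|r|\leq 1/2}}$, I use $|\partial_r \tilde v_0|^2 \lesssim |\tilde u_0|^2 + r^2|\partial_r \tilde u_0|^2$; the second term is already controlled by $\|\nabla \tilde u_0\|_{L^2(\mathbb{R}^3)}^2$, and for the first I apply the radial form of the 3d Hardy inequality $\int_0^\infty|\tilde u_0|^2\, dr \lesssim \|\nabla \tilde u_0\|_{L^2(\mathbb{R}^3)}^2$ to close the bound at $\epsilon^5$.

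For the core piece $v_\core$: a direct Fourier computation shows
\[
\hat v_\core(\xi) \;=\; \lambda_0^{1/2} e^{-i r(0)\xi}\!\left[\, r(0)\,\hat Q(\lambda_0 \xi) + i\lambda_0\, \hat Q'(\lambda_0 \xi)\,\right],
\]
so $|\widehat{\partial_r v_\core}(\xi)|^2 \lesssim \lambda_0\, |\xi|^2\!\left(|\hat Q(\lambda_0 \xi)|^2 + \lambda_0^2|\hat Q'(\lambda_0 \xi)|^2\right)$.  For the high-frequency bound, since $Q$ is Schwartz, $|\hat Q(\eta)|, |\hat Q'(\eta)| \leq C_M \langle\eta\rangle^{-M}$ for every $M$; changing variables $\eta = \lambda_0 \xi$ and using $\lambda_0 \cdot 2^{3k_0/4} \sim 2^{k_0/4}(\log k_0)^{-1/2}$ gives
\[
\|P_{\geq 2^{3k_0/4}} \partial_r v_\core\|_{L^2_r}^2 \;\lesssim\; \lambda_0^{-2}\, 2^{-(2M-3)k_0/4},
\]
which for $M$ large is much smaller than $\epsilon^{10}$ given $2^{-k_0/8} \leq \epsilon^5$.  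For the local bound, since $|r(0)-1| \leq \tfrac{1}{10}$, on $|r|\leq \tfrac12$ one has $|r-r(0)| \geq \tfrac25$; the exponential decay of $Q$ and $Q'$ then yields $\|\partial_r v_\core\|_{L^2_{|r|\leq 1/2}} \lesssim \lambda_0^{-1} e^{-c/\lambda_0}$, which dwarfs any polynomial in $\epsilon$.

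The main obstacle, which is really just careful bookkeeping, is keeping the one-dimensional and three-dimensional viewpoints aligned: the passage $v=ru$ with odd extension, the weighted identity relating $\|\partial_r v\|_{L^2(dr)}$ to $\|\nabla u\|_{L^2(\mathbb{R}^3)}$, and the corresponding localized version whose ``boundary-type'' contribution is absorbed by Hardy, do all the heavy lifting; the rest is routine Schwartz decay of $\hat Q$.
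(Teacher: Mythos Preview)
Your proof is correct and follows essentially the same approach as the paper: split $v_0$ into the core piece and the remainder $\tilde v_0 = r\tilde u_0$, control $\partial_r \tilde v_0$ via the identity $\partial_r(r\tilde u_0) = \tilde u_0 + r\partial_r\tilde u_0$ together with Hardy's inequality, and handle the core piece using the smoothness and exponential localization of $Q$. Your integration-by-parts identity $\int_0^\infty |\partial_r(r\tilde u_0)|^2\,dr = \int_0^\infty r^2|\partial_r\tilde u_0|^2\,dr$ is a clean variant of the paper's Hardy step, and note that once you have the full $\|\partial_r\tilde v_0\|_{L^2_r}$ bound, the separate treatment of the local piece $\|\partial_r\tilde v_0\|_{L^2_{|r|\leq 1/2}}$ is unnecessary.
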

\begin{proof}
Let $\tilde v_0 = r\tilde u_0$.
Since $\partial_r \tilde v_0 = \tilde u_0 + r\partial_r \tilde u_0$, we have by Hardy's inequality
\begin{align*}
\| \partial_r \tilde v_0 \|_{L^2_r}
&\lesssim \| |x|^{-1} \tilde u_0 \|_{L_x^2} + \|\nabla \tilde u_0 \|_{L_x^2} \\
&\lesssim \|\nabla \tilde u_0\|_{L^2_x} \\
&\lesssim \epsilon^5 \,.
\end{align*}
Recalling the definition of $\tilde u_0=\tilde u(0)$ in \eqref{E:tilde-u-def} (with $t=0$), we have
$$
v_0 = \frac{r}{\lambda_0^{1/2}} Q\left( \frac{r-r_0}{\lambda_0} \right) + \tilde v_0\,.
$$
The result then follows from the exponential localization and smoothness of $Q$.
\end{proof}

\begin{lemma}[radial Strichartz]
\label{L:radial-Strichartz}
Suppose that $u(t)$ is a 3d radial solution to
$$
i\partial_t u + \Delta u = f.
$$
Let $v(r,t) = ru(r,t)$ and $g(r,t)=rf(r,t)$ and consider $v$ as a 1d function in $r$ (extended to be odd), so that
$$
i\partial_t v + \partial_r^2 v = g\,.
$$
Then for $(q,r)$ and $(\tilde q,\tilde r)$ satisfying the 3d admissibility condition,
$$
\| r^{\frac{2}{p}-1} v \|_{L_t^qL_r^p} \lesssim \|v_0\|_{L_r^2} + \|r^{\frac{2}{p'}-1}g\|_{L_t^{\tilde q'}L_r^{\tilde p'}} \,.
$$
\end{lemma}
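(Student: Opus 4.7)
The plan is to reduce this to the standard inhomogeneous Strichartz estimate in $\mathbb{R}^3$ applied directly to $u$, by identifying every norm in the statement with its three-dimensional radial counterpart. The two equations already match: using $\Delta u = r^{-2}\partial_r(r^2\partial_r u)$ one checks $\partial_r^2(ru) = r\Delta u$, so $i\partial_t v + \partial_r^2 v = rf = g$ is exactly the rewriting of $i\partial_t u + \Delta u = f$ after multiplication by $r$.

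For the norms, integration in spherical coordinates gives, for any $1\le p<\infty$,
$$
\|u(t)\|_{L^p_x(\mathbb{R}^3)}^p = 4\pi\int_0^\infty |u(r,t)|^p\,r^2\,dr = 4\pi\int_0^\infty |v(r,t)|^p\,r^{2-p}\,dr;
$$
since $v$ is extended as an odd function, the integrand $|v|^p|r|^{2-p}$ is even in $r$, yielding
$$
\|u(t)\|_{L^p_x(\mathbb{R}^3)} = (2\pi)^{1/p}\,\bigl\||r|^{2/p-1}v(t)\bigr\|_{L^p_r(\mathbb{R})}.
$$
The same identity with $p$ replaced by $\tilde p'$ converts $\|f(t)\|_{L^{\tilde p'}_x(\mathbb{R}^3)}$ into the weighted 1d norm of $g=rf$, and in the case $p=2$ it specializes to $\|u_0\|_{L^2_x(\mathbb{R}^3)} = (2\pi)^{1/2}\|v_0\|_{L^2_r(\mathbb{R})}$.

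With these identifications in hand, I would invoke the full inhomogeneous Strichartz estimate in three dimensions -- i.e.\ the Keel--Tao $TT^*$/Christ--Kiselev extension of Lemma \ref{L:Strichartz} -- which asserts
$$
\|u\|_{L^q_t L^p_x(\mathbb{R}^3)} \lesssim \|u_0\|_{L^2_x(\mathbb{R}^3)} + \|f\|_{L^{\tilde q'}_t L^{\tilde p'}_x(\mathbb{R}^3)}
$$
for any two 3d admissible pairs $(q,p)$ and $(\tilde q,\tilde p)$, and substitute the three identities above; this rewrites the inequality precisely as the stated weighted 1d bound. The one step that requires any real care is bookkeeping on the odd extension of $v$: one must interpret $r^{2/p-1}$ on $\mathbb{R}$ as the even weight $|r|^{2/p-1}$, so that the full-line weighted $L^p_r$ norm differs from the half-line one only by a harmless factor of $2^{1/p}$. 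No new analytic input beyond standard 3d Strichartz is needed; the lemma is essentially a notational translation lemma.
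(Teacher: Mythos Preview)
Your proposal is correct and follows essentially the same approach as the paper: both identify the weighted one-dimensional norms with the corresponding three-dimensional radial norms and then invoke the standard 3d inhomogeneous Strichartz estimate. Your version is in fact more carefully written than the paper's one-line proof (which contains an apparent typo, writing $\|\nabla u\|_{L_t^qL_x^p}$ where $\|u\|_{L_t^qL_x^p}$ is meant).
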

\begin{proof}
The left-hand side is equivalent to $\| \nabla u \|_{L_t^qL_x^p}$ and the right-hand side is equivalent to $\|u_0\|_{L_x^2} + \|f\|_{L_t^{\tilde q'}L_x^{\tilde p}}$, so it is just a restatement of the 3d Strichartz estimates.
\end{proof}

\begin{lemma}[3d -- 1d conversion]
\label{L:conversion}
Suppose that $u(x)$ is a 3d radial function, and write $u(r)= u(x)$.  Let $v(r)=ru(r)$.
Then for $1< p < 3$, we have
\begin{equation}
\label{E:step-60}
\|r^{\frac{2}{p}-1} \partial_r v \|_{L^p_r} \lesssim  \|\nabla_x u \|_{L_x^p} \,.
\end{equation}
Also for $\frac{3}{2}< p< +\infty$, we have
\begin{equation}
\label{E:step-61}
\|\nabla_x u \|_{L_x^p}  \lesssim \| r^{\frac{2}{p}-1} \partial_r v\|_{L_r^p} \,.
\end{equation}
Consequently, for 3d admissible pairs $(q,p)$ such that $2\leq p<3$, we have
\begin{equation}
\label{E:step-62}
\| \nabla u \|_{L_t^qL_x^p} \sim \|r^{\frac{2}{p}-1} \partial_r v \|_{L_t^qL_r^p}.
\end{equation}
\end{lemma}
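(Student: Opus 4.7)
The plan is to convert between the 3d norm of $\nabla_x u$ and the weighted 1d norm of $\partial_r v$ by expanding one in terms of the other and using Hardy's inequality to control the resulting cross terms.

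First, for \eqref{E:step-60} I would compute $\partial_r v = u + r\,\partial_r u$, so that
\[
\|r^{\frac{2}{p}-1} \partial_r v\|_{L^p_r}^p \lesssim \int_0^\infty r^{2-p}|u|^p\, dr + \int_0^\infty r^{2}|\partial_r u|^p\, dr.
\]
The second integral is, up to a constant, $\|\nabla_x u\|_{L^p_x}^p$. The first integral equals (a constant times) $\bigl\||x|^{-1}u\bigr\|_{L^p_x}^p$, which by the 3d Hardy inequality (valid for $1<p<3$) is bounded by $\|\nabla_x u\|_{L^p_x}^p$. This yields \eqref{E:step-60}.

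For the reverse inequality \eqref{E:step-61}, I would write $\partial_r u = r^{-1}\partial_r v - r^{-2} v$, so that
\[
\|\nabla_x u\|_{L^p_x}^p \sim \int_0^\infty r^{2}|\partial_r u|^p\, dr \lesssim \int_0^\infty r^{2-p}|\partial_r v|^p\, dr + \int_0^\infty r^{2-2p}|v|^p\, dr.
\]
The first term on the right is exactly the desired quantity. For the second, since $v(r)=ru(r)$ vanishes at $r=0$, I would apply the weighted 1d Hardy inequality on $(0,\infty)$ in the form
\[
\int_0^\infty r^{k}\bigl|v/r\bigr|^p\, dr \;\lesssim\; \int_0^\infty r^{k}|\partial_r v|^p\, dr, \qquad k<p-1,
\]
with $k = 2-p$ (so $r^k|v/r|^p = r^{2-2p}|v|^p$); the condition $2-p<p-1$ is precisely $p>3/2$. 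Hence the extra term is absorbed into $\int r^{2-p}|\partial_r v|^p\,dr$, giving \eqref{E:step-61}.

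The consequent mixed-norm statement \eqref{E:step-62} follows by taking the $L^q_t$ norm of both inequalities, noting that the admissibility condition $\frac{2}{q}+\frac{3}{p}=\frac{3}{2}$ together with $2\leq p<3$ places $p$ inside the intersection $(3/2, 3)$ where both \eqref{E:step-60} and \eqref{E:step-61} hold. The only real obstacle is verifying the correct range of $p$ for the two Hardy-type inequalities; this is precisely what dictates the endpoint restrictions in the statement, and once those are kept track of, the proof reduces to the two short computations above.
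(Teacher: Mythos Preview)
Your proposal is correct and follows essentially the same approach as the paper: both expand $\partial_r v$ (resp.\ $\partial_r u$) in terms of the other and handle the cross term via Hardy's inequality, with the range restrictions $p<3$ and $p>3/2$ arising exactly from the validity of the respective Hardy inequalities. The only cosmetic difference is that the paper re-derives the needed Hardy bounds from scratch via the integral representation $u(r)=\int_1^\infty u'(sr)\,r\,ds$ (resp.\ $v(r)=\int_0^1 v'(sr)\,r\,ds$) and Minkowski's inequality, whereas you cite the 3d and weighted 1d Hardy inequalities directly.
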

We remark that $q=5$, $p=\frac{30}{11}$ falls within the range of validity for \eqref{E:step-62}.

\begin{proof}
The proof of \eqref{E:step-60} and \eqref{E:step-61} is a standard application of the Hardy inequality.

First, we prove \eqref{E:step-60}.  Using $v=ru$,
$$
r^{\frac{2}{p}-1}\partial_r v = r^\frac{2}{p}\partial_r u + r^{\frac{2}{p}-1} u,
$$
and thus,
$$
\| r^{\frac{2}{p}-1} \partial_r v \|_{L_r^p} \leq \| r^\frac{2}{p}\partial_r u \|_{L_r^p} + \|r^{\frac{2}{p}-1} u \|_{L_r^p} \,.
$$
We have, for $r>0$,
$$
u(r) = -(u(+\infty)-u(r)) = \int_{s=1}^{+\infty} \frac{d}{ds}[ u(sr)] \, ds = \int_{s=1}^{+\infty} u'(sr) r \, ds.
$$
By the Minkowski integral inequality,
$$
\| r^{\frac{2}{p}-1} u \|_{L_r^p} \leq \int_{s=1}^{+\infty} \| u'(sr) r^{\frac{2}{p}} \|_{L_{r>0}^p} \, ds.
$$
Changing variable $r\mapsto s^{-1}r$, we obtain that the right-hand side is bounded by
$$
\left( \int_{s=1}^{+\infty} s^{-\frac{3}{p}} \, ds \right) \|r^{\frac{2}{p}} u' \|_{L_{r>0}^p}
$$
and the $s$-integral is finite provided $p<3$.

Next, we prove \eqref{E:step-61}.  We have
$$
r^{\frac{2}{p}} \partial_r u = r^{\frac{2}{p}} \partial_r (r^{-1}v) = -r^{\frac{2}{p}-2}v + r^{\frac{2}{p}-1}\partial_r v \,,
$$
and hence,
$$
\| r^{\frac{2}{p}} \partial_r u \|_{L_r^p} \leq \| r^{\frac{2}{p}-2} v\|_{L_r^p} + \| r^{\frac{2}{p}-1}\partial_r v \|_{L_r^p} \,.
$$
We have
$$
v(r) = v(r)-v(0) = \int_{s=0}^1 \frac{d}{ds}[ v(sr)] \, ds = \int_{s=0}^1 v'(sr) r \, ds.
$$
By the Minkowski integral inequality,
$$
\|r^{\frac{2}{p}-2} v\|_{L_r^p} \leq \int_{s=0}^1 \|v'(sr) r^{\frac{2}{p}-1} \|_{L_r^p} \, ds.
$$
Changing variable $r\mapsto s^{-1}r$ in the right-hand side, we obtain
$$
\|r^{\frac{2}{p}-2} v\|_{L_r^p} \leq \left(\int_{s=0}^1 s^{-\frac{3}{p}+1} ds \right) \|v'(r) r^{\frac{2}{p}-1} \|_{L_r^p}
$$
and the $s$-integral is finite provided $p>\frac{3}{2}$.
\end{proof}

The replacement for Lemma \ref{L:local} is Lemma \ref{L:ring-local} below.  Notice that the difference is that in Lemma \ref{L:ring-local}, we only use $b<\frac12$ when working at $\dot H^1$ regularity.

\begin{lemma}
\label{L:ring-local}
Suppose that the assumptions of Prop. \ref{P:bootstrap} and Remark \ref{R:bs-notation} hold.  Then for $\frac12-\delta\leq b<\frac12$,
\begin{equation}
\label{E:step-80}
\| \partial_r v \|_{X_{0,b}(I_k)} \lesssim 2^{kb}(\log k)^{b+\frac12} = (T-t)^{-b}(\log|\log(T-t)|)^{b+\frac12} \,.
\end{equation}
Also, for $\frac12-\delta<b<\frac12+\delta$,
\begin{equation}
\label{E:step-81}
\| v \|_{X_{0,b}(I_k)} \lesssim_\delta 2^{k\delta} = (T-t)^{-\delta} \,.
\end{equation}
\end{lemma}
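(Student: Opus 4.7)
The plan is to adapt the proof of Lemma~\ref{L:local} to the present setting.  The key observation is that, although \eqref{E:NLS-5} is $\dot H^1$-critical in 3d, the equation $i\partial_t v + \partial_r^2 v = -r^{-4}|v|^4 v$ satisfied by $v=ru$ is structurally a 1d quintic (hence 1d $L^2$-critical) NLS, with a weight $r^{-4}$ that is of order unity on the blow-up support $r\sim 1$.  I would begin by converting the $\dot H^1$-control of $u$ given by the bootstrap input \eqref{E:step-82} into pointwise-in-time control of $v$: Lemma~\ref{L:conversion} together with Hardy's inequality and mass conservation gives $\|v(t)\|_{L^2_r}\lesssim 1$ and $\|\partial_r v(t)\|_{L^2_r}\sim \lambda(t)^{-1}$.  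As in the proof of Lemma~\ref{L:local}, I would then introduce the doubling times $s_j$ defined by $\lambda(s_j)=2^{-j}$; the log--log law yields $T-s_j\sim 2^{-2j}\log j$ and $s_{j+1}-s_j\sim 2^{-2j}\log j$, so that the slabs covering $I_k$ are $[s_j,s_{j+1}]$ for $j_0\leq j\leq K$, where the index $K\sim \tfrac{k}{2}+\tfrac12\log_2\log k$ is chosen so that $s_K\approx t_k$.

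On each slab I would use the 1d $L^2$-critical rescaling $V(y,s)\defeq \lambda_j^{1/2}v(\lambda_j y, s_j+\lambda_j^2 s)$ for $s\in [0,c\log j]$.  A direct calculation shows $V$ solves $i\partial_s V + \partial_y^2 V = -(\lambda_j y)^{-4}|V|^4 V$ with $\|V(s)\|_{H^1_y}\sim 1$ uniformly in $s$, and the blow-up support of $V$ sits at $y\sim \lambda_j^{-1}=2^j$, where the weight $(\lambda_j y)^{-4}$ is $O(1)$; near $y=0$ the weight is only apparently singular, because $V(y)=\lambda_j^{3/2}y\,U(y)$ with $U(y)=u(\lambda_j y,\cdot)$ vanishes linearly, so the nonlinear term $\lambda_j^{7/2}y|U|^4 U$ is smooth.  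Local theory (Lemma~\ref{L:flow} combined with the Strichartz embedding of Lemma~\ref{L:X-Strichartz}) applied on each unit subinterval $J\subset [0,c\log j]$ then produces $\|V\|_{X_{1,b}(J)}\lesssim 1$ for $b$ close to $\tfrac12$, and square-summing the $\sim\log j$ pieces gives $\|V\|_{X_{1,b}([0,c\log j])}\lesssim (\log j)^{1/2}$; the same argument at regularity $s=0$ yields $\|V\|_{X_{0,b}([0,c\log j])}\lesssim (\log j)^{1/2}$.

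Finally, a Plancherel calculation shows that the scaling acts on the Bourgain norms as
\begin{equation*}
\|\partial_r v\|_{X_{0,b}([s_j,s_{j+1}])} = \lambda_j^{-2b}\|\partial_y V\|_{X_{0,b}([0,c\log j])}, \quad \|v\|_{X_{0,b}([s_j,s_{j+1}])} = \lambda_j^{1-2b}\|V\|_{X_{0,b}([0,c\log j])},
\end{equation*}
so inserting the bounds from the previous step gives per-slab estimates $\lesssim 2^{2jb}(\log j)^{1/2}$ and $\lesssim 2^{j(2b-1)}(\log j)^{1/2}$ respectively.  The restriction $b<\tfrac12$ in the first estimate is precisely what allows one to apply \eqref{E:step-151} to square-sum the slabs; the geometric series is dominated by $j=K$, and substituting $2^{2K}\sim 2^k\log k$ produces $\|\partial_r v\|_{X_{0,b}(I_k)}\lesssim 2^{kb}(\log k)^{b+\frac12}$ as claimed.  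For the second estimate with $b=\tfrac12+\delta$ the factor $\lambda_j^{1-2b}=2^{2j\delta}$ evaluated at $j=K$ is $2^{k\delta}(\log k)^{\delta}$, which is absorbed into $2^{k\delta}$ up to an infinitesimal enlargement of $\delta$.  I expect the main obstacle to be the local-theory step for the weighted equation on $V$: although the weight is bounded on the essential support of $V$ and the vanishing of $V$ at $y=0$ removes the apparent singularity, making the routine Strichartz and bilinear-Strichartz machinery of \S\ref{S:estimates} go through cleanly requires careful treatment of the long tail of $V$ at large $y$, where the weight is small but the $L^2$-critical local theory is sensitive.
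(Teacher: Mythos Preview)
Your overall architecture is correct and matches the paper: rescale on each slab to a unit-scale 1d problem, run local theory on unit subintervals, square-sum, and rescale back.  The scaling computation of the $X_{0,b}$ norms and the final summation are fine.

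The genuine gap is in your treatment of the nonlinearity near the origin, and you have misidentified where the difficulty lies.  You write that near $y=0$ the vanishing $V(y)=\lambda_j^{3/2}yU(y)$ ``removes the apparent singularity'' so the nonlinear term $\lambda_j^{7/2}y|U|^4U$ is smooth, and then flag the large-$y$ tail as the obstacle.  It is the other way around.  At large $y$ (large $r$) the weight $r^{-4}$ only helps, and the 1d quintic estimates of \S\ref{S:estimates} go through.  Near $y=0$, ``smooth'' is not a quantitative statement: to close the local-theory step you must estimate $\|\partial_y(\lambda_j^{7/2}y|U|^4U)\|_{L^1_JL^2_y}$ on the region $\lambda_j y\leq\tfrac12$, and unwinding the scaling this is exactly a 3d energy-critical quantity of the type $\|\nabla(|u|^4u)\|_{L^1_tL^2_{|x|\leq 1/2}}$.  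There is no way to bound this using only $\|u(t)\|_{H^1}\sim\lambda(t)^{-1}$, because the 3d quintic is $\dot H^1$-critical and the local theory does not close from $H^1$ data alone.

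The paper resolves this by explicitly splitting the nonlinearity with cutoffs $\chi_1+\chi_2=1$ (supported in $r\leq\tfrac38$ and $r\geq\tfrac14$ respectively).  The $\chi_2$-piece is handled by the 1d machinery of Lemma~\ref{L:interp-1} since $\chi_2 r^{-4}$ is bounded.  The $\chi_1$-piece is rescaled back to original variables, rewritten in 3d via Hardy and Sobolev as $\lesssim\lambda\|\nabla u\|_{L^5_tL^{30/11}_{|x|\leq 1/2}}^5$, and then bounded by $\lambda\epsilon^5$ using the bootstrap input \eqref{BSI-3}.  This use of \eqref{BSI-3} is essential and is missing from your proposal; without it the local-theory step on each rescaled unit interval does not close.
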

\begin{proof}
We will only carry out the proof of \eqref{E:step-80}, which stems from \eqref{E:step-82}.\footnote{The need to take $b<\frac12$ comes from Lemma \ref{L:flow}, \eqref{E:step-101} versus \eqref{E:step-102}; when working at $\dot H^1$ regularity near the origin, we cannot suffer any loss of derivatives.  The fact that $\| \partial_r v \|_{X_{0,b}(I_k)}$ for $b<\frac12$ is only a $\dot H^1$ subcritical quantity is of no harm as the only application of \eqref{E:step-80} in the subsequent arguments is to control the solution for $r\geq \frac12$, where the equation is effectively $L^2$ critical.}
The proof of \eqref{E:step-81} is similar, and stems from the bound on $\|u(t)\|_{H^\delta}$ obtained from interpolation between \eqref{E:step-82} and mass conservation.

In the proof below, $T$ has no relation to the $T$ representing blow-up time in the rest of the article.

Let $\lambda=\lambda(t_k) = 2^{-k/2}(\log k)^{-1/2}$.
Let $r=\lambda R$, $x=\lambda X$, $t=\lambda^2T+t_k$.  Define the functions
$$
V(R,T) = \lambda^{1/2}v(\lambda R, \lambda^2T+t_k) = \lambda^{1/2}v(r,t) \,,
$$
$$
U(X,T) = \lambda^{1/2}u(\lambda X, \lambda^2T+t_k) = \lambda^{1/2}u(x,t) \,.
$$
Note that the identity $v(r)=ru(r)$ corresponds to $V(R)=\lambda RU(R)$.

We study $V(R,T)$ on $T\in [0,\log k]$, which corresponds to $t\in [t_k,t_{k+1}]$.  We have $\|V\|_{L_R^2} = \|v\|_{L_r^2} \sim O(1)$ (by mass conservation) and $\|\partial_R V\|_{L_R^2} =\lambda \|\partial_r v\|_{L_r^2}$.  Hence, $\|\partial_R V\|_{L_{[0,\log k]}^\infty L^2_R} = O(1)$.  The equation satisfied by $V$ is
$$
i\partial_T V + \partial_R^2 V = - \lambda^{-4}R^{-4}|V|^4V \,.
$$
Let $J=[a,b]$ be a unit-sized time interval in $[0,\log k]$.  Then by Lemma \ref{L:flow},
$$
\|\partial_R V \|_{X_{0,b}(J)} \lesssim \|\partial_R V(a)\|_{L^2} + \| \partial_R (\lambda^{-4} R^{-4} |V|^4V) \|_{L_J^1L_R^2} \,.
$$
Let $\chi_1(r)=1$ for $r\leq \frac14$ and $\supp \chi_1 \subset B(0,\frac38)$.  Let $\chi_2=1-\chi_1$.
Let $g_1 = \partial_R (\lambda^{-4}R^{-4} \chi_1(\lambda R) |V|^4V)$ and $g_2 = \partial_R(\lambda^{-4}R^{-4}\chi_2(\lambda R) |V|^4V)$, so that the above becomes
\begin{equation}
\label{E:step-88}
\|\partial_R V \|_{X_{0,b}(J)} \lesssim \|\partial_R V(a)\|_{L^2} + \|g_1\|_{L_J^1L_R^2} + \|g_2\|_{L_J^1L_R^2} \,.
\end{equation}
We begin with estimating $\|g_2\|_{L_J^1L_R^2}$.
We have
\begin{equation}
\label{E:step-83}
\|g_2\|_{L_J^1L_R^2} \lesssim \| V^5 \|_{L_J^1L_R^2} + \| V^4 (\partial_R V)\|_{L_J^1L_R^2}.
\end{equation}
We now treat the first term in \eqref{E:step-83}.  Of course, $\|V^5\|_{L_J^1L_R^2} = \|V\|_{L_J^5 L_R^{10}}^5$.  By Sobolev embedding $\|V\|_{L_R^{10}} \lesssim \|D_R^{2/5} V \|_{L_R^2}$ and by H\"older,
\begin{align*}
\|V\|_{L_J^5L_R^{10}} \lesssim |J|^{1/10} \| D_R^{2/5}V\|_{L_J^{10}L_R^2}
& \lesssim |J|^{1/10}( \|V\|_{L_J^{10} L_R^2} + \|\partial_R V\|_{L_J^{10}L_R^2})\\
& \leq |J|^{1/10}( |J|^{1/10} \, \|V\|_{L_J^\infty L_R^2} + \|\partial_R V\|_{L_J^{10}L_R^2}).
\end{align*}
Using that $\|V\|_{L_J^\infty L_R^2} \sim 1$, $|J| \sim 1$ and Lemma \ref{L:interp-1}, provided $\frac{2}{5}<b<\frac12$, we have
\begin{equation}
\label{E:step-84}
\|V\|_{L_J^5L_R^{10}} \lesssim |J|^{1/10}( 1 + \|\partial_R V \|_{X_{0,b}}).
\end{equation}
We now treat the second term in \eqref{E:step-83}, similarly estimating the term $\|V\|_{L_R^{10}}$.  We have
\begin{align*}
\| V^4 \partial_R V \|_{L_J^1L_R^2}
&\lesssim |J|^{7/20} \|V\|_{L_J^{10}L_R^{10}}^4 \|\partial_R V\|_{L_J^4L_R^{10}} \\
&\lesssim |J|^{7/20} (1+ \|\partial_R V \|_{L_J^{10}L_R^2})^4 \| \partial_R V\|_{L_J^4L_R^{10}}.
\end{align*}
Appealing to Lemma \ref{L:interp-1}, provided $\frac{9}{20}<b<\frac12$, we obtain
\begin{equation}
\label{E:step-85}
\|V^4 \partial_R V \|_{L_J^1L_R^2} \lesssim |J|^{7/20} (1+\|\partial_R V\|_{X_{0,b}})^5 \,.
\end{equation}
Combining \eqref{E:step-84} and \eqref{E:step-85}, we have
\begin{equation}
\label{E:step-86}
\|g_2\|_{L_J^1L_R^2} \lesssim |J|^{7/20} (1+\|\partial_R V\|_{X_{0,b}})^5 \,.
\end{equation}
Next we estimate $\|g_1\|_{L_J^1L_R^2}$. By rescaling,
$$
\| g_1 \|_{L_J^1L_R^2} = \lambda \| \partial_r (\chi_1 r^{-4} |v|^4v)\|_{L_{[t_k,t_{k+1}]}^1L_r^2}.
$$
Let $w = \tilde \chi_1 u$, where $\tilde \chi_1 =1$ on $\supp\chi_1$ but $\supp\tilde \chi_1 \subset B(0,\frac12)$.  Replacing $u=r^{-1}v$, we obtain $\partial_r ( r \chi_1 u^5) = \partial_r ( r \chi_1 w^5)$, and hence,
\begin{equation}
\label{E:step-89}
\begin{aligned}
\| g_1 \|_{L_R^2}
&\lesssim \lambda(\| w \|_{L_r^{10}}^5 + \|r w^4  \partial_r w \|_{L_r^2}) \\
&\lesssim \lambda(\| |x|^{-1/5}w \|_{L_x^{10}}^5 + \|w^4 \nabla w \|_{L_x^2}).
\end{aligned}
\end{equation}
By Hardy's inequality and 3d Sobolev embedding,
$$
\| |x|^{-1/5} w \|_{L_x^{10}} \lesssim \| D_x^{1/5} w \|_{L_x^{10}} \lesssim \| \nabla w \|_{L_x^{30/11}} \,.
$$
By H\"older's inequality and 3d Sobolev embedding,
$$
\|w^4 \nabla w \|_{L_x^2}  \leq \|w\|_{L_x^{30}}^4 \|\nabla w\|_{L_x^{30/11}} \lesssim \|\nabla w\|_{L_x^{30/11}}^5 \,.
$$
Returning to \eqref{E:step-89} and invoking \eqref{BSI-3} of Prop. \ref{P:bootstrap},
\begin{equation}
\label{E:step-87}
\|g_1\|_{L_{I_k}^1 L_r^2} \lesssim \lambda \| \nabla w \|_{L_{I_k}^5L_x^{30/11}}^5 \lesssim \lambda \epsilon^5 \,.
\end{equation}
By putting \eqref{E:step-86} and \eqref{E:step-87} into \eqref{E:step-88}, we obtain
$$
\| \partial_R V \|_{X_{0,b}(J)} \lesssim \|\partial_R V(a) \|_{L^2} + |J|^{7/20} ( 1+ \| \partial_R V\|_{X_{0,b}(J)})^5 + \lambda \epsilon^5 \,.
$$
From this, we conclude that we can take $|J|$ sufficiently small (but still ``unit-sized'' \footnote{meaning: with size independent of any small parameters like $\epsilon$ or $\lambda$}) so that it follows that
$$
\|\partial_R V \|_{X_{0,b}(J)} \leq O(1) \,.
$$
Square summing over unit-sized intervals $J$ filling $[0,\log k]$,
$$
\|\partial_R V \|_{X_{0,b}([0,\log k])} \lesssim (\log k)^{1/2} \,.
$$
This estimate scales back to
$$
\| \partial_r v \|_{X_{0,b}([t_k,t_{k+1}])} \lesssim (\log k)^{1/2}\lambda(t_k)^{-2b} = 2^{kb}(\log k)^{b+\frac12} \,.
$$
Now square sum over $k$ from $k=0$ to $k=K$ to obtain a bound of $2^{Kb}(\log K)^{b+\frac12}$ over the time interval $I_K$, which is the claimed estimate \eqref{E:step-80}.
\end{proof}

The analogue of Lemma \ref{L:iterator} will be Lemma \ref{L:iterator-ring} below.   We note that as a consequence of Lemma \ref{L:ring-local}, the hypothesis of Lemma \ref{L:iterator-ring} below is satisfied with $\alpha(k,N) = 2^{-k/2}N^{-1}$.

\begin{lemma}[high-frequency recurrence]
\label{L:iterator-ring}
Suppose that the assumptions of Prop. \ref{P:bootstrap} and Remark \ref{R:bs-notation} hold.  Let\footnote{Note the inclusion of one derivative in the definition of $\beta$, in contrast to the choice of definition for $\alpha$ in \S\ref{P:high-freq}.}
$$
\beta(k,N) \defeq
\|P_{\geq N} \partial_r v\|_{X_{0,\frac12-}(I_k)} \,.
$$
Then there exists an absolute constant $0<\mu  \ll 1$ such that for $N \geq 2^{k(1+\delta)/2}$, we have
\begin{equation}
\label{E:step-93}
\begin{aligned}
\indentalign \beta(k,N) +\| r^{\frac{2}{p}-1} P_{\geq N} \partial_r v \|_{L_{I_k}^q L_r^p}  \\
&\lesssim  \|P_{\geq N} \partial_r v_0 \|_{L_r^2} + 2^{k(1+\delta)/2}  N^{-1+\delta}\beta(k, \mu N)+ N^{-1+\delta} 2^{k\delta} \beta(k,\mu N)^2 + 2^{-k\delta} + \epsilon^5
\end{aligned}
\end{equation}
for all 3d admissible $(q,p)$.
\end{lemma}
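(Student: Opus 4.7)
The plan is to follow Lemma \ref{L:iterator} closely, now adapted to the 3d radial setting via $v = ru$, which satisfies the 1d equation $i\partial_t v + \partial_r^2 v = -r^{-4}|v|^4 v$. The new complications are (a) the singular factor $r^{-4}$ at the origin, (b) the need to control both the Bourgain $X_{0,\frac12-}$ norm and a 3d Strichartz norm on the LHS, and (c) the reliance on the bootstrap input \eqref{BSI-3} to handle the region near $r = 0$ where the pure 1d analysis breaks down. Apply $P_{\geq N}\partial_r$ to the $v$-equation: Lemma \ref{L:flow} \eqref{E:step-102} with $b = \tfrac12-\delta$ (we stay below $b = \tfrac12$ to avoid the $\omega^{1/2-b}$ loss that would be fatal at the $\dot H^1$-critical scale, per the footnote in Lemma \ref{L:ring-local}) bounds $\beta(k,N)$, and Lemma \ref{L:radial-Strichartz} with dual admissible pair $(\tilde q', \tilde p') = (1,2)$ bounds the Strichartz norm; both reduce (modulo $\|P_{\geq N}\partial_r v_0\|_{L^2_r}$) to
$$A := \|P_{\geq N}\partial_r(r^{-4}|v|^4 v)\|_{L^1_{I_k} L^2_r}.$$
Split $r^{-4}|v|^4 v = F_1 + F_2$ using cutoffs $\chi_1+\chi_2 = 1$ with $\supp\chi_1 \subset \{r \leq \tfrac12\}$, $\supp\chi_2 \subset \{r \geq \tfrac14\}$, and $F_j = \chi_j r^{-4}|v|^4 v$.

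For the interior piece, rewrite $F_1 = \chi_1 r|u|^4 u$ using $v = ru$. The dominant Leibniz term is $5\chi_1 r u^4 \partial_r u$, bounded in $L^2_r$ by $\||u|^4 |\nabla u|\chi_1\|_{L^2_x}$ via $\|f\|_{L^2_r} \sim \|r^{-1}f\|_{L^2_x}$ for radial $f$ and $|e_r \cdot \nabla u| \leq |\nabla u|$. Hölder in space with exponents $(30,30,30,30,30/11) \mapsto 2$, the 3d Sobolev embedding $\|u\|_{L^{30}_x} \lesssim \|\nabla u\|_{L^{30/11}_x}$, and Hölder in time with five $L^5_t$ factors, together with \eqref{BSI-3}, yield $\|P_{\geq N}\partial_r F_1\|_{L^1_{I_k} L^2_r} \lesssim \|\nabla u\|_{L^5_{I_k} L^{30/11}_{|x|\leq 1/2}}^5 \lesssim \epsilon^5$, supplying the $\epsilon^5$ term. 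Lower-order Leibniz terms (involving $\chi_1'$ or lacking $\partial_r u$) are estimated similarly or more easily.

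For the exterior piece, $\rho(r) := r^{-4}\chi_2$ is smooth with bounded derivatives, so $F_2 = \rho v^5$. I would mirror Lemma \ref{L:iterator} on the 1d $v$-equation: pass from $L^1_{I_k} L^2_r$ to $L^2_{I_k} L^2_r$ by Cauchy-Schwarz in time (harmless since $|I_k| \lesssim 1$) and expand $\partial_r(\rho v^5)$ via Leibniz (main term $5\rho v^4 \partial_r v$); then decompose each factor dyadically at scale $\mu N$ and split into Case 1 (exactly one high-frequency factor) and Case 2 (at least two). Case 1 combines Gagliardo--Nirenberg on the low factors ($\|v\|_{L^\infty_r} \lesssim \|v\|_{L^2}^{1/2}\|\partial_r v\|_{L^2}^{1/2} \lesssim 2^{k(1+\delta)/4}$ via \eqref{E:step-82} and Lemma \ref{L:ring-local}), bilinear Strichartz (Lemma \ref{L:X-bilinear-Strichartz}) on a high-low pairing contributing $N^{-1/2}$, and interpolated bilinear Strichartz (Lemma \ref{L:interp-2}) on the second pairing contributing another $N^{-1/2+\delta}$; this produces the $2^{k(1+\delta)/2}N^{-1+\delta}\beta(k,\mu N)$ term. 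Case 2 pairs two bilinear-Strichartz gains with Strichartz bounds on the remaining factors supplied by Lemma \ref{L:ring-local}, producing the $N^{-1+\delta}2^{k\delta}\beta(k,\mu N)^2$ term. Commutators $[P_{\geq N}, \rho]$ controlled by Lemma \ref{L:commutator} (of order $N^{-1} \lesssim 2^{-k(1+\delta)/2}$), transition-region terms from $\chi_2'$, and the $P_{\geq N}(\rho' v^5)$ contribution are all small because $P_{\geq N}$ is applied to a spatially regular object, and they absorb into the $2^{-k\delta}$ error.

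The principal obstacle is the bookkeeping required to orchestrate the 3d framework (needed for \eqref{BSI-3} and 3d Sobolev near the origin) with the 1d framework of $v$ (needed for bilinear Strichartz and the Bourgain norm $\beta$), and to maintain the weak $X_{0,\frac12-}$ norm throughout — which invalidates the direct Strichartz embedding of Lemma \ref{L:X-Strichartz} and forces reliance on the interpolated estimates of Lemmas \ref{L:interp-1}--\ref{L:interp-2}, requiring careful exponent tracking so that the clean exponents of \eqref{E:step-93} emerge.
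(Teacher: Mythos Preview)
Your proposal is essentially the paper's proof: the same reduction via Lemma \ref{L:flow} \eqref{E:step-102} (taking $b<\tfrac12$) together with Lemma \ref{L:radial-Strichartz} to land on $\|P_{\geq N}\partial_r(r^{-4}|v|^4v)\|_{L^1_{I_k}L^2_r}$, the same spatial cutoff $\chi_1+\chi_2$, the same treatment of the interior piece by rewriting in terms of $u$ and invoking 3d Sobolev/Hardy plus the bootstrap input \eqref{BSI-3} to produce $\epsilon^5$, and the same treatment of the exterior piece by passing through the commutator Lemma \ref{L:commutator} (this is where the $2^{-k\delta}$ error comes from) and then running the Lemma \ref{L:iterator}-type Case 1/Case 2 analysis with the bounds of Lemma \ref{L:ring-local}.

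One small inconsistency to flag: in your exterior Case~1 you invoke \emph{two} bilinear Strichartz pairings to reach $N^{-1+\delta}$, but you have framed the estimate as a direct bound on $\|P_{\geq N}\partial_r(\rho v^5)\|_{L^1_{I_k}L^2_r}$ (via $L^2_{I_k}L^2_r$). With only one high-frequency factor and no duality test function, there is only one low--high pair available, so only one $N^{-1/2}$ gain. The paper's phrase ``analysis similar to the proof of Lemma \ref{L:iterator}'' implicitly means running the duality against $w\in X_{0,\frac12-}$ (which is itself supported at $|\xi|\gtrsim N$), giving the second low--high pair $u_4 w$ exactly as in \eqref{E:step-3}. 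Either restore the duality argument here, or accept a slightly weaker Case~1 bound of the form $2^{k/4+}N^{-1/2+\delta}\beta(k,\mu N)$, which still suffices for the iteration in Prop.~\ref{P:high-freq-ring} since $N\geq 2^{3k/4}$.
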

\begin{proof}
Note that $v$ solves
$$
i\partial_t v + \partial_r^2 v = -r|u|^4u = -r^{-4}|v|^4v \,.
$$
Let $\chi_1(r)$ be a smooth function such that $\chi_1(r)=1$ for $|r|\leq \frac14$ and $\chi_1$ is supported in $|r|\leq \frac38$.  Let $\chi_2=1-\chi_1$.  Apply $P_{\geq N}\partial_r$ to obtain
$$
(i\partial_t + \partial_r^2) P_{\geq N} \partial_r v = g_1+g_2,
$$
where
$$
g_j(r) = - P_{\geq N} \partial_r (\chi_j \, r^{-4}\, |v|^4v) \,,  \quad j=1,2 \,.
$$
Then by Lemma \ref{L:flow}\footnote{Note that we were able to obtain the $L_{I_k}^1 L_r^2$ right-hand side (without $\delta$ loss), because we took $b<\frac12$ in the Bourgain norm.} and Lemma \ref{L:radial-Strichartz},
$$
\|P_{\geq N} \partial_r v\|_{X_{0,\frac12-}(I_k)} +\| r^{\frac{2}{p}-1} P_{\geq N} \partial_r v \|_{L_{I_k}^q L_r^p}  \lesssim  \|P_{\geq N} \partial_r v_0 \|_{L_r^2} + \|g_1 \|_{L_{I_k}^1L_r^2} + \|g_2 \|_{L_{I_k}^1L_r^2} \,.
$$

The term $\|g_2 \|_{L_t^1L_r^2}$ is controlled in a manner similar to the analysis in the proof of Lemma \ref{L:iterator}.  For this term, $\chi_2 \, r^{-4}$ and $\partial_r(\chi_2 \, r^{-4})$ are smooth bounded functions, with all derivatives bounded.  By Lemma \ref{L:commutator},
\begin{equation}
\label{E:step-90}
\|g_2 \|_{L_r^2} \lesssim  \|P_{\geq N} \la \partial_r\ra v^5 \|_{L_r^2} + N^{-1} \|\la \partial_r \ra v^5\|_{L_r^2} \,.
\end{equation}
By an analysis similar to the proof of Lemma \ref{L:iterator}, utilizing the bounds in Lemma \ref{L:ring-local}, we obtain
\begin{equation}
\label{E:step-91}
\|P_{\geq N} \la \partial_r\ra v^5 \|_{L_{I_k}^1 L_r^2} \lesssim 2^{k(1+\delta)/2}N^{-1+\delta} \beta(k,\mu N) + N^{-1+\delta} 2^{k\delta} \beta(k,\mu N)^2\,.
\end{equation}
Also by the Strichartz estimates, as in the proof of Lemma \ref{L:ring-local} above,
\begin{equation}
\label{E:step-92}
\| \la \partial_r \ra v^5 \|_{L_{I_k}^1 L_r^2} \lesssim \|D^\delta v \|_{X_{0,b}}^4 \|\partial_R v \|_{X_{0,b}} \lesssim 2^{k(1+\delta)/2}.
\end{equation}
Inserting \eqref{E:step-91} and \eqref{E:step-92} into \eqref{E:step-90}, we obtain
\begin{equation}
\label{E:star-2}
\|g_2 \|_{L_{I_k}^1 L_r^2} \lesssim  2^{k(1+\delta)/2}N^{-1+\delta} \beta(k,\mu N) + N^{-1+\delta} 2^{k\delta} \beta(k,\mu N)^2 + N^{-1} 2^{k(1+\delta)/2}.
\end{equation}
The last term, $N^{-1}2^{k(1+\delta)/2}$, gives the contribution $2^{-k\delta}$ in \eqref{E:step-93} due to the restriction $N \geq 2^{k(1+\delta)/2}$ (different $\delta$'s).

Next we address $\|g_1\|_{L_{I_k}^1L_r^2}$.  We estimate away $P_{\geq N}$
\begin{equation}
\label{E:step-70}
\|g_1\|_{L_{I_k}^1 L_r^2} \lesssim \|\tilde g_1 \|_{L_{I_k}^1L_r^2} \,,
\end{equation}
where (ignoring complex conjugates)
$$
\tilde g_1 = \partial_r (r^{-4}\chi_1 v^5).
$$
Let $w = \tilde \chi_1 u$, where $\tilde \chi_1 =1$ on $\supp\chi_1$ but $\supp\tilde \chi_1 \subset B(0,\frac12)$.  Replacing $u=r^{-1}v$, we obtain $\tilde g_1 = \partial_r ( r \chi_1 u^5) = \partial_r ( r \chi_1 w^5)$, and hence,
\begin{align*}
\|\tilde g_1 \|_{L_r^2}
&\lesssim \| w \|_{L_r^{10}}^5 + \|r w^4  \partial_r w \|_{L_r^2} \\
&\lesssim \| |x|^{-1/5}w \|_{L_x^{10}}^5 + \|w^4 \nabla w \|_{L_x^2}.
\end{align*}
By Hardy's inequality and 3d Sobolev embedding,
$$
\| |x|^{-1/5} w \|_{L_x^{10}} \lesssim \| D_x^{1/5} w \|_{L_x^{10}} \lesssim \| \nabla w \|_{L_x^{30/11}} \,.
$$
By H\"older's inequality and 3d Sobolev embedding,
$$
\|w^4 \nabla w \|_{L_x^2}  \leq \|w\|_{L_x^{30}}^4 \|\nabla w\|_{L_x^{30/11}} \lesssim \|\nabla w\|_{L_x^{30/11}}^5 \,.
$$
Hence,
$$
\|\tilde g_1 \|_{L_r^2} \lesssim \| \nabla w \|_{L_x^{30/11}}^5 \,.
$$
Returning to \eqref{E:step-70} and invoking \eqref{BSI-3} of Prop. \ref{P:bootstrap},
$$
\|g_1\|_{L_{I_k}^1 L_r^2} \lesssim \| \nabla w \|_{L_{I_k}^5L_x^{30/11}}^5 \lesssim \epsilon^5.
$$
\end{proof}

The analogue of Prop. \ref{P:high-freq} is

\begin{proposition}[high-frequency control]
\label{P:high-freq-ring}
Suppose that the assumptions of Prop. \ref{P:bootstrap} and Remark \ref{R:bs-notation} hold.
Then for any 3d Strichartz admissible pair $(q,p)$, we have
$$
\|P_{\geq 2^{3k/4}} \partial_r v \|_{X_{0,\frac12-}(I_k)} + \|r^{\frac{2}{p}-1}P_{\geq 2^{3k/4}} \partial_r v\|_{L_{I_k}^q L_r^p} \lesssim  \epsilon^5 \,.
$$
\end{proposition}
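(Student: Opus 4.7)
The plan is to mimic the proof of Proposition \ref{P:high-freq} in \S\ref{S:high-frequency}, substituting Lemma \ref{L:iterator-ring} for Lemma \ref{L:iterator} and using Lemma \ref{L:initial-data} and the smallness of $T$ (i.e., $2^{-k_0/8}\leq\epsilon^5$) at the end to absorb the residual terms into $\epsilon^5$.

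First, record a crude starting bound on $\beta(k,N)$.  From Lemma \ref{L:ring-local} with $b=\frac12-$ we have $\|\partial_r v\|_{X_{0,\frac12-}(I_k)}\lesssim 2^{k/2}\log k$, and interpolating against the $\dot H^1$ bound \eqref{E:step-82} (analogously to the derivation of \eqref{E:crude-2}) yields $\beta(k,N)\lesssim 2^{k(1+\delta)/2}N^{-1}$ for $N\geq 2^{k(1+\delta)/2}$.  In particular, for $N\geq 2^{3k/4}$ we have the effective gain
\[
2^{k(1+\delta)/2}N^{-1+\delta}\leq 2^{-k/4+\delta'}\ll 1,
\]
so the linear self-map implicit in \eqref{E:step-93} is a contraction up to the driving terms.

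Next, iterate \eqref{E:step-93} at a fixed $k$, replacing $N$ successively by $\mu N,\mu^2 N,\ldots,\mu^J N$.  The geometric series in $(2^{-k/4+\delta})^j$ is summable, so after a fixed number of iterations (say $J=7$, as in Proposition \ref{P:high-freq}) one obtains
\[
\beta(k,N) + \|r^{\frac{2}{p}-1}P_{\geq N}\partial_r v\|_{L_{I_k}^qL_r^p}
\lesssim \|P_{\geq \mu^{J}N}\partial_r v_0\|_{L^2_r} + 2^{-k\delta} + \epsilon^5 + (\text{quadratic term}),
\]
with the leftover $(2^{-k/4+\delta})^{J}\beta(k,\mu^J N)$ being negligible against the a priori bound.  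The quadratic contribution $N^{-1+\delta}2^{k\delta}\beta(k,\mu N)^2$ is handled by a standard bootstrap: once $\beta(k,\mu N)$ is pinned at size $\epsilon^5$ the quadratic term is of higher order and may be absorbed.

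Finally, specialize to $N=2^{3k/4}$.  Since $k\geq k_0$ and $2^{-k_0/8}\leq\epsilon^5$, Lemma \ref{L:initial-data} gives $\|P_{\geq \mu^J 2^{3k/4}}\partial_r v_0\|_{L_r^2}\lesssim\epsilon^5$ (the factor $\mu^J$ only shifts the frequency threshold by an absolute constant, still above the scale controlled in Lemma \ref{L:initial-data}), while the term $2^{-k\delta}\leq 2^{-k_0\delta}$ is bounded by $\epsilon^5$ after choosing $\delta$ comparable to $\tfrac18$ in the iteration (consistent with the smallness hypothesis $T\leq\epsilon^{200}$).  This yields $\beta(k,2^{3k/4})\lesssim\epsilon^5$, and a final application of \eqref{E:step-93} converts this control into the Strichartz bound $\|r^{\frac{2}{p}-1}P_{\geq 2^{3k/4}}\partial_r v\|_{L_{I_k}^qL_r^p}\lesssim\epsilon^5$ for every 3d admissible pair $(q,p)$.

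The main technical obstacle is the bookkeeping of the various $\delta$-losses: each application of Lemma \ref{L:iterator-ring} costs a small power of $2^{k\delta}$, and the number of iterations required to bring $\beta(k,N)$ down to the size $\|P_{\geq N}\partial_r v_0\|_{L^2}+\epsilon^5$ must be fixed \emph{before} choosing $\delta$, so that the aggregate $\delta$-loss is still defeated by the net gain $2^{-k/4}$ per iteration and by the smallness $2^{-k_0/8}\leq\epsilon^5$ of the initial data.  The quadratic term in \eqref{E:step-93} poses no additional difficulty because it is self-improving under the bootstrap.
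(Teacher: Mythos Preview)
Your overall approach is correct and matches the paper's (terse) proof, which simply says to iterate Lemma~\ref{L:iterator-ring} as in Proposition~\ref{P:high-freq}. Your elaboration of how the iteration closes and how the $\epsilon^5$ bound emerges from Lemma~\ref{L:initial-data} together with $2^{-k_0/8}\le\epsilon^5$ is accurate.

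There is, however, one genuine slip in your starting bound. You claim $\beta(k,N)\lesssim 2^{k(1+\delta)/2}N^{-1}$ by ``interpolating against the $\dot H^1$ bound \eqref{E:step-82}, analogously to~\eqref{E:crude-2}.'' This does not work: in \S\ref{S:high-frequency} the $N^{-1}$ factor in \eqref{E:crude-2} comes from trading the frequency projection against one derivative, using that $\alpha$ is an $L^2$-level quantity while the a~priori bound is at $H^1$. Here $\beta(k,N)=\|P_{\ge N}\partial_r v\|_{X_{0,\frac12-}}$ already carries the derivative, so to gain $N^{-1}$ you would need an $H^2$-type bound on $v$, which is unavailable. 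Lemma~\ref{L:ring-local} only gives the crude bound $\beta(k,N)\le\|\partial_r v\|_{X_{0,\frac12-}(I_k)}\lesssim 2^{k(1+\delta)/2}$ with no $N^{-1}$.

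Fortunately this does not break the argument. With gain factor $B\sim 2^{-k/4}$ and quadratic coefficient $C\sim N^{-1}\sim 2^{-3k/4}$, starting from $\beta\lesssim 2^{k/2}$ one computes successively $\beta\lesssim 2^{k/4}$, then $\beta\lesssim 1$, then $\beta\lesssim \epsilon^5+2^{-k/4}\lesssim\epsilon^5$ (using $2^{-k/4}\le(2^{-k_0/8})^2\le\epsilon^{10}$), after which the bound stabilizes. So three or four iterations already suffice, and your choice $J=7$ is ample. The quadratic term is never an obstruction in this step-by-step computation, so no separate bootstrap is required. You should replace the incorrect crude bound by $\beta(k,N)\lesssim 2^{k(1+\delta)/2}$ and adjust the iteration count accordingly; everything else stands.
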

\begin{proof}
Several applications of Lemma \ref{L:iterator-ring}, just as Prop. \ref{P:high-freq} is deduced from Lemma \ref{L:iterator}.
\end{proof}

Due to the $\dot H^1$ criticality of the problem, we do not have improved regularity of $v(t)-e^{it\partial_r^2}v_0$ as was the case in Prop. \ref{P:high-freq}.  As a substitute, we can use the methods of Lemma \ref{L:iterator-ring} to obtain the following lemma:

\begin{lemma}[additional high-frequency control]
\label{L:extra-high-freq}
Suppose that the assumptions of Prop. \ref{P:bootstrap} and Remark \ref{R:bs-notation} hold.  Then
\begin{equation}
\label{E:star-1}
\left( \sum_{k=k_0}^{+\infty} \|P_{2^{3k/4}} \partial_r v \|_{L_{[t_{k-1},t_k]}^\infty L_r^2}^2 \right)^{1/2} \lesssim \epsilon^5 \,.
\end{equation}
\end{lemma}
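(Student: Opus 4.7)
The plan is to apply Duhamel's formula to $P_{N_k}\partial_r v$, where $N_k \defeq 2^{3k/4}$, and then square-sum the resulting estimates in $k$.  From the equation $i\partial_t v + \partial_r^2 v = r^{-4}|v|^4 v =: F$, I have
\begin{equation*}
P_{N_k}\partial_r v(t) = e^{it\partial_r^2}P_{N_k}\partial_r v_0 + \int_0^t e^{i(t-s)\partial_r^2}P_{N_k}\partial_r F(s)\,ds,
\end{equation*}
so that
\begin{equation*}
\|P_{N_k}\partial_r v\|_{L^\infty_{[t_{k-1},t_k]}L^2_r} \lesssim \|P_{N_k}\partial_r v_0\|_{L^2_r} + \|P_{N_k}\partial_r F\|_{L^1_{I_k}L^2_r}.
\end{equation*}
Squaring and summing in $k\ge k_0$, frequency orthogonality together with Lemma \ref{L:initial-data} handles the data contribution:
\begin{equation*}
\sum_{k\ge k_0}\|P_{N_k}\partial_r v_0\|_{L^2_r}^2 \le \|P_{\ge N_{k_0}}\partial_r v_0\|_{L^2_r}^2 \lesssim \epsilon^{10}.
\end{equation*}
The remaining task is to bound $\sum_{k\ge k_0}\|P_{N_k}\partial_r F\|_{L^1_{I_k}L^2_r}^2$ by $\epsilon^{10}$.

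To do this I would follow the decomposition from the proof of Lemma \ref{L:iterator-ring} and split $F=\chi_1 F + \chi_2 F$, where $\chi_1$ is a smooth cutoff to $|r|\le \tfrac38$.  For the exterior piece $P_{N_k}\partial_r(\chi_2 F)$ the bilinear Strichartz/Sobolev analysis carried out in Lemma \ref{L:iterator-ring} still applies, yielding
\begin{equation*}
\|P_{N_k}\partial_r(\chi_2 F)\|_{L^1_{I_k}L^2_r} \lesssim 2^{k(1+\delta)/2}N_k^{-1+\delta}\beta(k,\mu N_k) + N_k^{-1+\delta}2^{k\delta}\beta(k,\mu N_k)^2.
\end{equation*}
Since $N_k=2^{3k/4}$, the prefactor $2^{k(1+\delta)/2}N_k^{-1+\delta}$ equals $2^{-k/4+O(k\delta)}$, and Proposition \ref{P:high-freq-ring} supplies $\beta(k,\mu N_k)\lesssim \epsilon^5$; this piece is thus bounded by $2^{-k\delta}\epsilon^5$, which is square-summable.

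For the interior piece $P_{N_k}\partial_r(\chi_1 F)$, I would combine the bootstrap interior Strichartz estimate with Bernstein's inequality $\|P_{N_k}g\|_{L^2_r}\lesssim N_k^{-\sigma}\|\la D_r\ra^\sigma g\|_{L^2_r}$ to trade the frequency localization for a factor $N_k^{-\sigma}$ for some $\sigma>0$.  Using Hardy's inequality to absorb the $r^{-4}$ singularity (as already done in the $g_1$ analysis of Lemma \ref{L:iterator-ring}), the 3d--1d conversion of Lemma \ref{L:conversion}, and the fractional Leibniz rule applied to $|v|^4 v$, one expects to bound $\partial_r(\chi_1 F)$ in $L^1_{I_k}H^\sigma_r$ by $O(\epsilon^5)$, whence
\begin{equation*}
\|P_{N_k}\partial_r(\chi_1 F)\|_{L^1_{I_k}L^2_r} \lesssim N_k^{-\sigma}\epsilon^5,
\end{equation*}
which is also square-summable.

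The main difficulty lies in this interior piece: the bootstrap only delivers one spatial derivative of $u$ near the origin, so squeezing out a positive gain $\sigma>0$ of additional smoothness for the quintic nonlinearity is delicate.  A careful combination of the fractional chain rule, Hardy's inequality to tame the $r^{-4}$ weight, and the 3d--1d conversion (mirroring the proof of Lemma \ref{L:ring-local}, where $\tfrac12$ derivatives of $\psi_R u$ were extracted from $H^1$ control alone) should deliver the required $\sigma>0$.
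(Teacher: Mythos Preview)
Your treatment of the initial-data term and of the exterior piece $g_2=P_{N_k}\partial_r(\chi_2 F)$ matches the paper's argument.  The gap is in your handling of the interior piece $g_1=P_{N_k}\partial_r(\chi_1 F)$.  You propose to gain a factor $N_k^{-\sigma}$ via Bernstein by placing $\partial_r(\chi_1 F)$ in $L^1_{I_k}H^\sigma_r$ for some $\sigma>0$.  But near $r=0$ the equation is $\dot H^1$-critical, and the only information you have there is the bootstrap control $\|\la\nabla\ra u\|_{L^5 L^{30/11}_{|x|\le 1/2}}\le\epsilon$, i.e.\ exactly one derivative.  There is no mechanism to extract $H^{1+\sigma}$ regularity of $|u|^4u$ from this; your reference to Lemma~\ref{L:ring-local} is a misreading, since that lemma works purely at the $H^1$ level, and the genuine regularity-gain lemmas (\ref{L:a-little-1}, \ref{L:a-little-2}, \ref{L:a-little}) rely on local smoothing \emph{away} from the singular region together with the remainder bound \eqref{E:bs-error-bd}, neither of which is available on $\supp\chi_1$.

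The paper avoids this problem by a different, elementary device.  Since $I_k\subset I\defeq I_K$, one first enlarges each time interval to the common $I$, so that the function $G\defeq\partial_r(\chi_1 r^{-4}|v|^4v)$ no longer depends on $k$.  Then Minkowski's inequality swaps $\ell^2_k$ and $L^1_I$:
\[
\|P_{N_k}G\|_{\ell^2_k L^1_I L^2_r}\le \|P_{N_k}G\|_{L^1_I \ell^2_k L^2_r}\lesssim \|G\|_{L^1_I L^2_r},
\]
the last step being Littlewood--Paley square-summability in $L^2_r$.  Now $\|G\|_{L^1_I L^2_r}\lesssim\epsilon^5$ is exactly the estimate already proved for $g_1$ in Lemma~\ref{L:iterator-ring}, and no extra regularity is needed.
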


\begin{proof}
It suffices to prove the estimate with the sum terminating at $k=K$, provided we obtain a bound independent of $K$.
For each $k$, $k_0\leq k \leq K$, write the integral equation on $I_k$.  For $t\in [t_{k-1},t_k]$
$$
v(t) = e^{it\partial_r^2}v_0  -i \int_0^t e^{i(t-t')\partial_r^2} (r^{-4} |v|^4v(t')) \, dt' \,.
$$
Apply $P_{2^{3k/4}}\partial_r $ to obtain
$$
P_{2^{3k/4}} \partial_r v(t) = P_{2^{3k/4}} e^{it\partial_r^2} \partial_r v_0 -i \int_0^t e^{i(t-t') \partial_r^2} P_{2^{3k/4}}\partial_r ( r^{-4}|v|^4v(t')) \, dt' \,.
$$
Estimate
$$
\|P_{2^{3k/4}} \partial_r v\|_{L_{[t_{k-1},t_k]}^\infty L_r^2} \leq \| P_{2^{3k/4}} \partial_r v_0 \|_{L_r^2} + \| P_{2^{3k/4}} \partial_r (r^{-4}|v|^4v) \|_{L_{I_k}^1 L_r^2} \,.
$$
By the inequality $(a + b)^2 \leq 2a^2 + 2b^2$, this implies
$$
\|P_{2^{3k/4}} \partial_r v\|_{L_{[t_{k-1},t_k]}^\infty L_r^2}^2 \lesssim \| P_{2^{3k/4}} \partial_r v_0 \|_{L_r^2}^2 + \| P_{2^{3k/4}} \partial_r (r^{-4}|v|^4v) \|_{L_{I_k}^1 L_r^2}^2 \,.
$$
Let $\chi_1(r)$ be a smooth function such that $\chi_1(r)=1$ for $|r|\leq \frac14$ and $\chi_1$ is supported in $|r|\leq \frac38$.  Let $\chi_2=1-\chi_1$.  Let
$$
g_j = P_{2^{3k/4}} \partial_r ( \chi_j r^{-4}|v|^4v) \,, \quad j=1,2 \,.
$$

Recall that in the proof of Lemma \ref{L:iterator-ring}, we showed that
$$
\|P_{\geq N} \partial_r \chi_2 r^{-4} |v|^4v \|_{L_{I_k}^1 L_r^2} \lesssim 2^{k(1+\delta)/2}N^{-1+\delta} \beta(k,\mu N) + N^{-1+\delta} 2^{k\delta} \beta(k,\mu N)^2 + N^{-1} 2^{k(1+\delta)/2} \,,
$$
and Prop. \ref{P:high-freq-ring} showed that $\beta(k,2^{3k/4}) \lesssim 1$.  Combining gives
$$
\|g_2 \|_{L_{I_k}^1L_r^2} \lesssim 2^{-k/8} \,,
$$
and hence,
$$
\left( \sum_{k=k_0}^K \|g_2 \|_{L_{I_k}^1L_r^2}^2 \right)^{1/2} \lesssim 2^{-k_0/8} \leq \epsilon^5\,.
$$

Now we address $g_1$.  Let $w=\tilde \chi_1 u$.  For each $k$, lengthen $I_k$ to $I\defeq I_K$ to obtain
$$
\sum_{k=k_0}^K \|g_1 \|_{L_{I_k}^1L_r^2}^2 \lesssim \| P_{2^{3k/4}} \partial_r (r^{-4}\chi_1 |w|^4w) \|_{\ell_k^2 L_I^1 L_r^2}^2\,.
$$
By the Minkowski inequality, for any space-time function $F$, we have
$$
\| P_{2^{3k/4}}  F \|_{\ell_k^2L_I^1 L_r^2} \leq \| P_{2^{3k/4}}F \|_{L_I^1 \ell_k^2 L_r^2} \lesssim \| F \|_{L_I^1 L_r^2}.
$$
Hence,
$$
\sum_{k=k_0}^K \|g_1 \|_{L_{I_k}^1L_r^2}^2 \lesssim \| \partial_r( \chi_1 r^{-4}|w|^4w) \|_{L_I^1 L_r^2}^2\, .
$$
At this point we proceed as in Lemma \ref{L:iterator-ring} to obtain a bound by $\epsilon^5$.
\end{proof}

Now we begin to insert spatial cutoffs away from the blow-up core and obtain the missing low frequency bounds.
The first step is to obtain a little regularity above $L^2$, since it is needed in the proof of Lemma \ref{L:low-freq-iterator}.

\begin{lemma}[small regularity gain]
\label{L:a-little}
Suppose that the assumptions of Prop. \ref{P:bootstrap} and Remark \ref{R:bs-notation} hold. Let $\psi_{3/4}(r)$ be a smooth function such that $\psi_{3/4}(r)=1$ for $|r|\leq \frac34$ and $\psi_{3/4}(r)=0$ for $|r|\geq \frac78$.  Then
$$
\| \la D_r\ra ^{3/7} \psi_{3/4} v \|_{L_{[0,T)}^\infty L_r^2} \lesssim \epsilon^5 \,.
$$
\end{lemma}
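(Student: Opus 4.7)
The proof mirrors Lemma \ref{L:a-little-1}, with $\la D_r\ra^{3/7}$ in place of $\la D_x\ra^{2/5}$. Set $w=\psi_{3/4} v$ and fix an intermediate cutoff $\tilde\psi$ with $\tilde\psi=1$ on $\supp(\psi_{3/4})$ and $\supp(\tilde\psi)\subset\{|r|\leq 8/9\}$; let $q=\tilde\psi v$. Since $|r(t)-1|\leq 1/10$ by hypothesis \eqref{BSI-1}, $\supp(\tilde\psi)$ is strictly separated from the blow-up locus $r=r(t)$. Combined with the exponential decay of $Q$ and $\lambda(t)\to 0$, this makes $u_\core$ super-exponentially small in $\lambda(t)^{-1}$ on $\supp(\tilde\psi)$; hence $u\approx \tilde u$ there, and \eqref{E:bs-error-bd} gives $\|\nabla u\|_{L^2(\supp(\tilde\psi))} \lesssim (T-t)^{-1/2} |\log(T-t)|^{-1-\delta}$. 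Using $r^{-4}|v|^4 v = r|u|^4 u$, $w$ satisfies
\begin{equation*}
i\partial_t w + \partial_r^2 w = -\psi_{3/4}\, r\, |u|^4 u \;+\; 2\partial_r(\psi_{3/4}'\, v) \;-\; \psi_{3/4}''\, v \;\defeq\; F_1 + F_2 + F_3.
\end{equation*}
Applying $\la D_r\ra^{3/7}$ and using the standard energy estimate together with the dual local smoothing estimate on the derivative term $F_2$ (exactly as in the proof of Lemma \ref{L:a-little-1}) yields, on $I=[0,T')$ for any $T'<T$,
\begin{equation*}
\|\la D_r\ra^{3/7} w\|_{L^\infty_I L^2_r} \lesssim \|\la D_r\ra^{3/7} w(0)\|_{L^2_r} + \|\la D_r\ra^{3/7} F_1\|_{L^1_I L^2_r} + \|\la D_r\ra^{-1/14}F_2\|_{L^2_I L^2_r} + \|\la D_r\ra^{3/7} F_3\|_{L^1_I L^2_r}.
\end{equation*}

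The initial-data term is $\lesssim\epsilon^5$: writing $v_0=ru_\core(0)+r\tilde u_0$ on $\supp(\psi_{3/4})$, the core contribution is super-exponentially small in $\lambda_0^{-1}$, while Hardy's inequality and \eqref{BSI-4} give $\|\la D_r\ra^{3/7}(r\tilde u_0)\|_{L^2_r}\lesssim \|\tilde u_0\|_{H^1}\leq\epsilon^5$. The linear terms $F_2,F_3$ are handled by Gagliardo--Nirenberg interpolation in $r$: since $\|\la D_r\ra^{-1/14}F_2\|_{L^2_r}\lesssim \|q\|_{L^2_r}^{1/14}\|\partial_r q\|_{L^2_r}^{13/14}\lesssim (T-t)^{-13/28}|\log(T-t)|^{-13(1+\delta)/14}$ by \eqref{E:bs-error-bd}, the $L^2_I$ time norm evaluates to a small positive power of $T$ (specifically $\lesssim T^{1/28}$), which under $T\leq\epsilon^{200}$ is far smaller than $\epsilon^5$; the $F_3$ term is treated analogously (and easier, being $L^1_I$).

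The main step, and the crux of the proof, is the nonlinear term $F_1$. Distribute $\la D_r\ra^{3/7}$ using the fractional Leibniz rule and convert to the $u$-variable via $v=ru$, keeping the factor of $r$, then split $\psi_{3/4}=\chi\psi_{3/4}+(1-\chi)\psi_{3/4}$ with $\chi=1$ on $|r|\leq 1/2$ and $\supp(\chi)\subset\{|r|\leq 5/8\}$. The $\chi$-piece lies inside the bootstrap region $|x|\leq \tfrac12$ and is estimated exactly as the term $\|g_1\|_{L^1_{I_k}L^2_r}$ in the proof of Lemma \ref{L:iterator-ring} using Hardy's inequality, 3d Sobolev embedding, and \eqref{BSI-3}, producing a bound $\lesssim \|\la\nabla\ra u\|_{L^5_I L^{30/11}_{|x|\leq 1/2}}^5\leq\epsilon^5$. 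The $(1-\chi)$-piece lives on the annulus $\{\tfrac12\leq |r|\leq \tfrac78\}$, which lies outside the bootstrap region but is strictly separated from the blow-up sphere; there $u\approx\tilde u$, and Sobolev embedding (e.g. $\|u\|_{L^6}\lesssim \|\nabla u\|_{L^2}$) with \eqref{E:bs-error-bd} bounds the integrand by a polynomial expression in $(T-t)^{-1/2}|\log(T-t)|^{-1-\delta}$ whose $L^1_I$ time integral is a small positive power of $T$, again $\leq\epsilon^5$ by $T\leq\epsilon^{200}$. The main obstacle is precisely this annular piece, where the bootstrap hypothesis is unavailable: the remedy is the geometric separation from the blow-up sphere together with the shortness of $[0,T)$, which converts the in-time growth in \eqref{E:bs-error-bd} into $\epsilon^5$-smallness. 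Absorbing any tiny pre-factor multiplying $\|\la D_r\ra^{3/7} w\|_{L^\infty L^2}$ into the left-hand side completes the proof.
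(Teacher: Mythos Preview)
There is a genuine gap in your treatment of the annular piece of $F_1$. You claim that on the annulus $\{\tfrac12\leq |r|\leq \tfrac78\}$, Sobolev embedding together with \eqref{E:bs-error-bd} bounds $\|\la D_r\ra^{3/7}F_1\|_{L^2_r}$ by a polynomial in $(T-t)^{-1/2}|\log(T-t)|^{-1-\delta}$ whose $L^1_{[0,T)}$ integral is a positive power of $T$. This is false. Using the only available bounds there (mass conservation and \eqref{E:bs-error-bd}), Gagliardo--Nirenberg gives $\|v\|_{L^\infty_r}\lesssim \|v\|_{L^2}^{1/2}\|\partial_r v\|_{L^2}^{1/2}$ and $\|\la D_r\ra^{3/7}v\|_{L^2}\lesssim \|v\|_{L^2}^{4/7}\|\partial_r v\|_{L^2}^{3/7}$, so the fractional Leibniz rule produces, at best, an integrand of order $(T-t)^{-17/14}$ (with log decorations). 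Since $17/14>1$, the $L^1_{[0,T)}$ integral diverges near $t=T$; the smallness of $T$ is irrelevant. Your final sentence about absorbing a tiny prefactor is disconnected from the argument you actually wrote for this piece.

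The paper's remedy is structural: it writes the annular nonlinearity as $|q|^4 w$ with $q=r^{-1}\tilde\psi v$ (the intermediate cutoff $\tilde\psi$ vanishing near $r=0$ so that $r^{-1}$ is harmless), applies fractional Leibniz to produce $\||q|^4\|_{L^1_I L^\infty_r}\,\|D_r^{3/7}w\|_{L^\infty_I L^2_r}$ plus a companion term also carrying $\|D_r^{3/7}w\|$, and then bounds $\||q|^4\|_{L^1_I L^\infty}\lesssim \|q\|_{L^\infty_I L^2}^2\|\partial_r q\|_{L^2_I L^2}^2$. The point is that $\int_0^T (T-t)^{-1}|\log(T-t)|^{-2}\,dt\lesssim |\log T|^{-1}\lesssim(\log\epsilon^{-1})^{-1}$: it is the \emph{logarithmic} gain in \eqref{E:bs-error-bd}, not a power of $T$, that makes the coefficient small enough to absorb. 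This is exactly the mechanism of Lemma~\ref{L:a-little-1}, which you cite but do not faithfully reproduce. (A smaller issue: your $\chi$-piece is supported in $|r|\leq 5/8$, not $|r|\leq 1/2$, so it does not lie inside the bootstrap region as you assert; this is easily repaired by shrinking $\supp\chi$.)
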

\begin{proof}  Taking $\psi=\psi_{3/4}$, let $w=\psi v$.  Then
\begin{align*}
i\partial_t w + \partial_r^2 w
&= \psi(i\partial_t + \partial_r^2)v + 2\partial_r (\psi'  v) - \psi'' v \\
&= - r^{-4}\psi |v|^4v + 2\partial_r (\psi'  v) - \psi'' v \\
&= F_1+F_2+F_3.
\end{align*}
Local smoothing and energy estimates provide the following estimate
\begin{equation}
\label{E:step-100}
\| D_r^{3/7} w \|_{L_{[0,T)}^\infty L_r^2} \lesssim
\begin{aligned}[t]
&\|D_r^{3/7} w_0 \|_{L_r^2} + \|D_r^{3/7} F_1 \|_{L_{[0,T)}^1L_r^2} \\
&+ \| D_r^{-1/2} D_r^{3/7} F_2 \|_{L_{[0,T)}^2L_r^2} + \| D_r^{3/7}F_3 \|_{L_{[0,T)}^1 L_r^2} .
\end{aligned}
\end{equation}
We begin with the $F_1$ estimate.  Let $\tilde \psi$ be a smooth function such that
$$
\tilde \psi(r) =
\left\{
\begin{aligned}
&0 && \text{if }r\leq \tfrac14 \\
&1 && \text{if }\tfrac12 \leq r \leq \tfrac78 \\
&0 && \text{if } r\geq \tfrac{7}{8}. 
\end{aligned}
\right.
$$
Let $q=r^{-1}\tilde \psi v$.  By writing $1=(1-\tilde \psi^4)+\tilde \psi^4$, we obtain
$$
F_1 = -(1-\tilde \psi^4)\psi r^{-4} |v|^4 v - |q|^4 w .
$$
Note that $(1-\tilde \psi^4)\psi$ is supported in $|r|\leq \frac12$ and $\tilde \psi^4 \psi$ is supported in $\frac14\leq |r|\leq \frac{15}{16}$.

For the term $(1-\tilde \psi^4)\psi r^{-4} |v|^4 v$, we appeal to the bootstrap hypothesis \eqref{BSI-3} in the same way we did in the proof of Lemma \ref{L:iterator-ring} to obtain a bound by $\epsilon^5$.  As for the term $|q|^4w$, by the fractional Leibniz rule,
$$
\|D_r^{3/7}(|q|^4 w) \|_{L_{[0,T)}^1L_r^2} \lesssim \| D_r^{3/7} |q|^4\|_{L_{[0,T)}^1L_r^{7/3}} \|w\|_{L_{[0,T)}^\infty L_r^{14}} + \| |q|^4 \|_{L_{[0,T)}^1L_r^\infty} \|D_r^{3/7} w \|_{L_{[0,T)}^\infty L_r^2} \,.
$$
By Sobolev embedding and Gagliardo-Nirenberg,
$$
\| D_r^{3/7} |q|^4 \|_{L_r^{7/3}} + \| |q|^4 \|_{L_r^\infty} \lesssim \| q\|_{L_r^2}^2 \|\partial_r q \|_{L_r^2}^2 \,,
$$
$$
\|w\|_{L_r^{14}} \lesssim \|D_r^{3/7} w \|_{L_r^2} \,.
$$
Hence,
$$
\|D_r^{3/7}(|q|^4 w) \|_{L_{[0,T)}^1L_r^2} \lesssim \|q\|_{L_{[0,T)}^\infty L_r^2}^2 \| \partial_r q\|_{L_{[0,T)}^2 L_r^2}^2 \|D_r^{3/7} w\|_{L_{[0,T)}^\infty L_r^2} \,.
$$
By \eqref{E:bs-error-bd},  $\|\partial_r q \|_{L_{[0,T)}^2L_r^2}\lesssim (|\log T|)^{-1} \lesssim (\log \epsilon^{-1})^{-1}$.  Consequently, we obtain
$$
\|D_r^{3/7} F_1 \|_{L_{[0,T)}^1 L_r^2} \lesssim \epsilon^5 + (\log \epsilon^{-1})^{-1} \|D_r^{3/7} w \|_{L_{[0,T)}^\infty L_r^2} \,.
$$
As for $F_2$, we start by bounding
$$
\|D_r^{-1/2}D_r^{3/7} F_2 \|_{L_{[0,T)}^2 L_r^2} \lesssim \| D_r^{13/14} (\psi' \, v) \|_{L_{[0,T)}^2 L_r^2} \,.
$$
On the support of $\psi'$, we have $v=rq$. Noting that on the support of $\psi'$ we have $r \sim 1$ and using the interpolation, we get
$$
\|D_r^{13/14} (\psi' r q)\|_{L_r^2} \lesssim \|q \|_{L_r^2} + \|q\|_{L_r^2}^{1/14} \|\partial_r q\|_{L_r^2}^{13/14} \,.
$$
By \eqref{E:bs-error-bd},
$$
\| \|\partial_r q \|_{L_r^2}^{13/14} \|_{L_{[0,T)}^2} \lesssim T^{1/28} \lesssim \epsilon^5 \,.
$$
Consequently,
$$
\|D_r^{-1/2}D_r^{3/7} F_2 \|_{L_{[0,T)}^2 L_r^2} \lesssim T^{1/2}+T^{1/28} \lesssim \epsilon^5 \,.
$$
Finally, for the term $F_3$, we estimate
$$
\| D_r^{3/7} F_3 \|_{L_{[0,T)}^1 L_r^2} \lesssim  \|q\|_{L_{[0,T)}^1 L_r^2} + \|\partial_r q \|_{L_{[0,T)}^1L_r^2} \lesssim T+T^{1/2} \lesssim \epsilon^5 \,.
$$
Collecting the above estimates and inserting into \eqref{E:step-100}, we obtain
$$
\|D_r^{3/7} w \|_{L_{[0,T)}^2 L_r^2} \lesssim \|D_r^{3/7}w_0 \|_{L_r^2} + (\log \epsilon^{-1})^{-1} \|D_r^{3/7} w\|_{L_{[0,T)}^\infty L_r^2} + \epsilon^5 \,,
$$
and the result follows (by bootstrap assumption \eqref{BSI-4}, $\|D_r^{3/7} w_0 \|_{L_r^2} \lesssim \epsilon^5$).
\end{proof}

We will need to apply the following lemma eight times in the proof of Prop. \ref{P:ring-energy} below.  As in \S \ref{S:finite-speed}, the use of the frequency projection $P_{\lesssim (T-t)^{-3/4}}$ and the process of exchanging derivatives for time-factors via \eqref{E:exchange} is essentially an appeal to the finite speed of propagation for low frequencies.

\begin{lemma}[low frequency recurrence]
\label{L:low-freq-iterator}
Suppose that the assumptions of Prop. \ref{P:bootstrap} and Remark \ref{R:bs-notation} hold.   Let $\frac58 < r_1<r_2<\frac34$ and $\frac18 \leq s\leq 1$.  Let $\psi_1(r)$ and $\psi_2(r)$ be smooth cutoff functions such that
$$
\psi_1(r) = \begin{cases} 1 & \text{on }|r|\leq r_1 \\ 0 & \text{on }|r|\geq \frac12(r_1+r_2) \end{cases} \, \qquad
\psi_2(r) = \begin{cases} 1 & \text{on }|r|\leq \frac12(r_1+r_2) \\ 0 & \text{on }|r|\geq r_2 \end{cases} \,.
$$
Then
$$
\| D_r^s (\psi_1 v) \|_{L_{[0,T)}^\infty L_r^2} \lesssim \| D_r^{s-\frac18} (\psi_2 v) \|_{L_{[0,T)}^\infty L_r^2} + \epsilon^5 \,.
$$
\end{lemma}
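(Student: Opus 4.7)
The plan is to mimic the proof of Lemma \ref{L:low-freq-it} in the present 3d radial setting. Working with the 1d variable $v = ru$, let $P_- = P_{\le(T-t)^{-3/4}}$ be the time-dependent low-frequency cutoff, and set $w = P_-(\psi_1 v)$. Using the identity for $\partial_t P_-$ from Section \ref{S:finite-speed} together with $i\partial_t v + \partial_r^2 v = -r^{-4}|v|^4 v$, derive the equation
$$i\partial_t w + \partial_r^2 w = F_1 + F_2 + F_3 + F_4,$$
where $F_1 \sim (T-t)^{-1/4}Q\,\partial_r(\psi_1 v)$ comes from $\partial_t P_-$, $F_2 = -P_-\psi_1 r^{-4}|v|^4 v$ is the nonlinearity, and $F_3 = 2P_-\partial_r(\psi_1' v)$, $F_4 = -P_-\psi_1'' v$ are cutoff errors. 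The energy method yields
$$\|D_r^s w\|_{L^\infty_{[0,T)} L^2_r}^2 \lesssim \|D_r^s w(0)\|^2_{L^2_r} + \int_0^T |\la D_r^s F_1, D_r^s w\ra| \, dt + \sum_{j=2}^4 \|D_r^s F_j\|^2_{L^1_{[0,T)} L^2_r}.$$

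For the initial-data term: since $\supp\psi_1 \subset\{|r|\le 3/4\}$ and the ground-state core sits near $r(0)\approx 1$ with width $\lambda_0$, the core contribution is exponentially small in $\lambda_0^{-1}$, while the remainder is $O(\epsilon^5)$ by \eqref{BSI-4} and Lemma \ref{L:initial-data}. The $F_1$ term is handled exactly as in Lemma \ref{L:low-freq-it}: Cauchy-Schwarz together with $Q = QP_-$ and the exchange $\|D_r^{1/2}\tilde Q g\|\lesssim(T-t)^{-3/8}\|g\|$ reduce the time integral to $\sum_k \|D_r^s P_{2^{3k/4}}(\psi_1 v)\|^2_{L^\infty_{[t_{k-1},t_k]} L^2}$; Lemma \ref{L:commutator} moves $\psi_1$ past the dyadic projection, and then Prop. \ref{P:high-freq-ring} together with Lemma \ref{L:extra-high-freq} produces the bound $\lesssim\epsilon^{10}$. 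For $F_3$ and $F_4$: since $\psi_1'$ and $\psi_1''$ are supported where $\psi_2\equiv 1$, we rewrite $\psi_1' v = \psi_1'\psi_2 v$ and apply the exchange $\|P_- D_r^\alpha g\|_{L^2}\lesssim(T-t)^{-3\alpha/4}\|g\|_{L^2}$ with $\alpha = 9/8$ (for $F_3$) or $\alpha = 1/8$ (for $F_4$) to reduce to the $(s-\tfrac18)$-derivative of $\psi_2 v$; since $T\le\epsilon^{200}$, the resulting small time factors $T^{5/32}$ and $T^{29/32}$ yield $\|D_r^s F_j\|^2_{L^1 L^2}\lesssim \|D_r^{s-1/8}(\psi_2 v)\|^2_{L^\infty L^2_r}$.

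The main obstacle is the nonlinear term $F_2$. The exchange estimate $\|P_- D_r^s g\|_{L^2}\lesssim (T-t)^{-3s/4}\|g\|_{L^2}$ together with the identity $\psi_1 r^{-4}|v|^4 v = \psi_1 r|u|^4 u$ (from $v = ru$) converts this to $(T-t)^{-3s/4}\|\psi_1|u|^4 u\|_{L^2_x}$. Picking $\tilde\chi_1\equiv 1$ on $\supp\psi_1$ with $\supp\tilde\chi_1\subset\{|x|\le 7/8\}$ and following the Hardy/Sobolev computation from the proof of Lemma \ref{L:iterator-ring}, we reduce to controlling $\|\nabla(\tilde\chi_1 u)\|_{L^5_t L^{30/11}_x}^5$. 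We split $\tilde\chi_1$ into a piece supported in $|x|\le 1/2$, directly controlled by bootstrap hypothesis \eqref{BSI-3}, and an annular piece supported in $1/2\le |x|\le 7/8$; on the annulus $r\sim 1$ and $u = v/r$, so Lemma \ref{L:a-little} (giving $\|\la D_r\ra^{3/7}\psi_{3/4} v\|_{L^\infty L^2_r}\lesssim \epsilon^5$) combined with Prop. \ref{P:high-freq-ring} for the Strichartz piece yields smallness of the same order. Integrating the (integrable for $s\le 1$) factor $(T-t)^{-3s/4}$ produces $\|D_r^s F_2\|_{L^1 L^2_r}\lesssim \epsilon^5$. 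Finally, writing $\psi_1 v = w + (I-P_-)(\psi_1 v)$ and absorbing the high-frequency tail by $\epsilon^5$ via Prop. \ref{P:high-freq-ring} and a commutator estimate, taking square roots of the energy inequality yields the claim.
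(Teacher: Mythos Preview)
Your overall strategy matches the paper's---set $w=P_-(\psi_1 v)$, run the energy method, and treat the four forcing terms---and your handling of the initial data, $F_1$ (via Lemma \ref{L:extra-high-freq}), $F_3$, and $F_4$ is correct and essentially the same as the paper's.

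The gap is in your treatment of $F_2$. You exchange \emph{all} $s$ derivatives, reducing to $(T-t)^{-3s/4}\|\psi_1|u|^4u\|_{L^2_x}$, and then claim that the Hardy/Sobolev computation from Lemma \ref{L:iterator-ring} reduces this to $\|\nabla(\tilde\chi_1 u)\|_{L^5_tL^{30/11}_x}^5$. But that computation in Lemma \ref{L:iterator-ring} was for $\partial_r$ applied to the quintic; it relied on one full derivative landing on the nonlinearity to reach $\|\nabla w\|_{L^{30/11}_x}^5$. After your exchange, there is no derivative left on $|u|^4u$: you would need $\|\tilde\chi_1 u\|_{L^{10}_x}^5$, and in 3d $\|f\|_{L^{10}}$ is not controlled by $\|\nabla f\|_{L^{30/11}}$; any route through Strichartz at $\dot H^1$ leaves a time weight $(T-t)^{-3s/4}$ that fails to pair (e.g.\ $(T-t)^{-3/4}\notin L^2_{[0,T)}$ for $s=1$). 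Moreover, even granting your reduction, the annular contribution $\|\nabla(\chi_{1/2\le|x|\le 7/8}u)\|_{L^5_tL^{30/11}_x}$ is not available: Lemma \ref{L:a-little} gives only $H^{3/7}$ in $L^\infty_t$, and Prop.\ \ref{P:high-freq-ring} controls only $P_{\ge 2^{3k/4}}\partial_r v$; the low-frequency $H^1$ Strichartz control on the annulus is exactly what the whole recurrence is meant to produce, so invoking it here is circular.

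The paper repairs this by splitting $F_2$ \emph{before} any exchange. For the piece supported in $|r|\le\tfrac12$, it keeps the full $D_r^s$ and runs the Lemma \ref{L:iterator-ring} computation directly against the bootstrap hypothesis \eqref{BSI-3}. For the annular piece (where $r^{-4}$ is harmless), it exchanges only $\alpha=\tfrac18$ and applies the fractional Leibniz rule to isolate a single factor $\|\la D_r\ra^{s-1/8}(\psi_2 v)\|_{L^2_r}$---this is where the recurrence term on the right-hand side actually originates---multiplied by $\|\psi_2 v\|_{L^\infty_r}^4$. The latter is then bounded by interpolating Lemma \ref{L:a-little} with the crude bound $\|\partial_r(\psi_2 v)\|_{L^2_r}\lesssim (T-t)^{-1/2}$ from \eqref{E:bs-error-bd}, producing an integrable time weight $(T-t)^{-3/32-1/4}$. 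You should redo $F_2$ along these lines.
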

\begin{proof}
Let $\chi(\xi)=1$ for $|\xi|\leq 1$ and $\chi(\xi)= 0$ for $|\xi|\geq 2$ be a smooth function.  Let $P=P_{\leq (T-t)^{-3/4}}$ be the time-dependent multiplier operator defined by $\widehat{Pf}(\xi) = \chi((T-t)^{3/4}\xi) \hat f(\xi)$ (where Fourier transform is in space only).  Note that the Fourier support of $P$ at time $T-t=2^{-k}$ is $\lesssim 2^{3k/4}$.  We further have that
$$
\partial_t Pf = \tfrac34i(T-t)^{-1/4} Q\partial_r f + P\partial_t f \,,
$$
where $Q=Q_{(T-t)^{-3/4}}$ is the time-dependent multiplier
$$
\widehat{Q h}(\xi) = \chi'((T-t)^{3/4}\xi) \, \widehat h(\xi) \,.
$$
Note that the Fourier support of $Q$ at time $t=T-2^{-k}$ is $\sim 2^{3k/4}$.  Note also that if $g=g(r)$ is any function, then
\begin{equation}
\label{E:exchange}
\| PD_r^\alpha g \|_{L_r^2} \leq (T-t)^{-3\alpha/4} \|  g\|_{L_r^2}.
\end{equation}
Let $\tilde \psi$ be a smooth function such that
$$
\tilde \psi(r) =
\begin{cases}
0 & \text{if }|r|\leq \frac14 \\
1 & \text{if }\frac12 \leq |r| \leq \frac12(r_1+r_2) \\
0 & \text{if }|r|\geq r_2.
\end{cases}
$$
Let $w= P_{\leq (T-t)^{-3/4}} D_r^s (\psi_1 v)$.  By Prop. \ref{P:high-freq-ring}, it suffices to show that  $\|w\|_{L_{[0,T)}^\infty L_r^2} \lesssim \| D_r^{s-\frac18} (\psi_2 v) \|_{L_{[0,T)}^\infty L_r^2} + \epsilon^5$.  Note that $w$ solves
\begin{align*}
i\partial_t w + \partial_r^2 w
&= - \tfrac34(T-t)^{-1/4} Q\partial_r D_r^s (\psi_1 v) - P D_r^s (\psi_1 r^{-4} |v|^4 v)\\
& + 2 P \partial_r D_r^s (\psi_1' v) - P D_r^s (\psi_1'' v)\\
& = F_1+F_2+F_3+F_4 \,.
\end{align*}
By the energy method, we obtain
$$
\|w\|_{L_t^\infty L_r^2}^2 \leq \|w_0\|_{L_r^2}^2 + \int_0^T|\la F_1, w \ra_{L_r^2}| + 10\sum_{j=2}^4 \|F_j \|_{L_{[0,T)}^1L_r^2}^2 \,.
$$
We estimate $F_1$ using Lemma \ref{L:extra-high-freq} as follows.\footnote{It seems that the energy method is needed here, since it furnishes $\int_0^T |\la F_1, w \ra_{L_r^2}|$; we cannot see a way to estimate $\|F_1\|_{L_{[0,T)}^1L_r^2}$.  Indeed, by pursuing the method here, one ends up with a bound $\|F_1\|_{L_{[0,T)}^1L_r^2} \lesssim \sum_{k=k_0}^\infty \|P_{2^{3k/4}} \psi_1 v \|_{L_r^2}$, which is not controlled by Lemma \ref{L:extra-high-freq}, since it is not a \emph{square} sum.}
Let $\tilde Q$ be a projection onto frequencies of size $\sim (T-t)^{-3/4}$ (importantly, \emph{not} $\lesssim (T-t)^{-3/4}$).  Then
$$
\int_0^T |\la F_1, w \ra_{L_r^2} | \lesssim \int_0^T (T-t)^{-1/4} \|\tilde Q D_r^{\frac12+s} (\psi_1 v) \|_{L_r^2}^2 \,.
$$
It suffices to take $s=1$, the worst case.  The presence of $\tilde Q$ allows for the exchange $D_r^{1/2} \sim (T-t)^{-3/8}$, which gives
$$
\int_0^T |\la F_1, w \ra_{L_r^2} | \lesssim \int_0^T (T-t)^{-1} \|\tilde Q \partial_r (\psi_1 v) \|_{L_r^2}^2.
$$
By decomposing $[0,T) = \cup_{k=k_0}^\infty [t_k,t_{k+1}]$, and using that on $[t_k,t_{k+1}]$, $(T-t)^{-1} = 2^k$, we have
$$
\int_0^T (T-t)^{-1} \|\tilde Q \partial_r (\psi_1 v) \|_{L_r^2}^2 = \sum_{k=k_0}^\infty \int_{[t_k,t_{k+1}]} 2^k \|P_{2^{3k/4}} \partial_r (\psi_1 v)\|_{L_r^2}^2 \,.
$$
Since $|[t_k,t_{k+1}]|=2^{-k}$, the above is controlled by
$$
\sum_{k=k_0}^\infty \|P_{2^{3k/4}} \partial_r (\psi_1 v) \|_{L_{[t_k,t_{k+1}]}^\infty L_r^2}^2\,,
$$
the square root of which is bounded by $\epsilon^5$ (by Lemma \ref{L:extra-high-freq}).

For the nonlinear term $F_2$, by writing $1=1-\tilde \psi^4 + \tilde \psi^4$, we have
\begin{align*}
F_2
&= - P D_r^s (r^{-4}(1-\tilde \psi^4)\psi_1 |v|^4v)-P D_r^s(r^{-4}\tilde \psi^4\psi_1 |v|^4v)\\
&= F_{21} + F_{22} \,.
\end{align*}
Note that the support of $(1-\tilde \psi^4)\psi_1$ is contained in $|r|\leq \frac12$, and we can use the bootstrap hypothesis \eqref{BSI-3} to obtain
$$
\|F_{21}\|_{L_{[0,T)}^1L_r^2} \lesssim \epsilon^5 \,,
$$
as was done in the proof of Lemma \ref{L:iterator-ring} (for any $s\leq 1$).  For $F_{22}$,  taking $\tilde v = \psi_2v$ and noting that $\psi_1\psi_2=\psi_1$, we have $F_{22} = P D_r^s (r^{-4}\tilde \psi^4\psi_1 |\tilde v|^4\tilde v)$.
By \eqref{E:exchange} with $\alpha=\frac18$,
$$
\|F_{22} \|_{L_{[0,T)}^1 L_r^2} \leq \left\| (T-t)^{-3/32} \| D_r^{s-\frac18} (r^{-4} \tilde \psi^4 \psi_1 |\tilde v|^4\tilde v)  \|_{L_r^2} \right\|_{L_{[0,T)}^1} \,.
$$
Since $\tilde \psi$ is supported in $\frac14\leq |r|\leq r_2$, the function $\tilde \psi^4\psi_1 r^{-4}$ is smooth and compactly supported.  By the fractional Leibniz rule,
\begin{align*}
\| D_r^{s-\frac18} (r^{-4} \tilde \psi^4 \psi_1 |\tilde v|^4\tilde v) \|_{L_r^2}
&\lesssim \|\tilde v\|_{L_r^\infty}^4 \|\la D_r \ra^{s-\frac18} \tilde v \|_{L_r^2} \\
&\lesssim \|D_r^{3/7} \tilde v\|_{L_r^2}^{7/2}\, \|\partial_r \tilde v \|_{L_r^2}^{1/2} \, \|\la D_r \ra^{s-\frac18} \tilde v \|_{L_r^2}.
\end{align*}
Using the bound $\|\partial_r \tilde v\|_{L_r^2} \leq (T-t)^{-1/2}$ from \eqref{E:bs-error-bd} and the bound on $\|D_r^{3/7}\tilde v\|_{L_{[0,T)}^\infty L_r^2}$ from Lemma \ref{L:a-little}, we obtain
\begin{align*}
\| F_{22} \|_{L_{[0,T)}^1 L_r^2}
&\lesssim \| (T-t)^{-3/32} (T-t)^{-1/4}\|_{L_{[0,T)}^1} \|\la D_r\ra^{s-\frac18} \tilde v\|_{L_{[0,T)}^\infty L_r^2} \\
&\lesssim \epsilon^5 \|\la D_r\ra^{s-\frac18} \tilde v\|_{L_{[0,T)}^\infty L_r^2} \,.
\end{align*}

To bound $F_3$, we use \eqref{E:exchange} with $\alpha=\frac98$ to obtain
$$
\|F_3 \|_{L_{[0,T)}^1 L_r^2} \lesssim \| (T-t)^{-27/32} \|_{L_{[0,T)}^1} \| D_r^{s-\frac18} \tilde v \|_{L_{[0,T)}^\infty L_r^2} \,.
$$
The $F_4$ term is more straightforward than $F_3$, since there is one fewer derivative.
\end{proof}

Finally, we can obtain the $H^1$ control, which completes part of the bootstrap estimate \eqref{BSO} in Prop. \ref{P:bootstrap}.

\begin{proposition}[$H^1$ control]
\label{P:ring-energy}
Suppose that the assumptions of Prop. \ref{P:bootstrap} and Remark \ref{R:bs-notation} hold.  Then
$$
\| \partial_r v \|_{L_{[0,T)}^\infty L_{|r|\leq \frac58}^2} \lesssim  \epsilon^5 \,.
$$
\end{proposition}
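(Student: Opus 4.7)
The plan is to iterate Lemma \ref{L:low-freq-iterator} eight times on successively larger cutoff regions inside $\{|r| \leq 3/4\}$, mirroring the deduction of Proposition \ref{P:finite-speed-new} from Lemma \ref{L:low-freq-it}. Each application trades $1/8$ of a derivative for a slightly larger cutoff, so starting from $s=1$, eight applications bring us down to $s=0$; at that point the small-regularity base case from Lemma \ref{L:a-little} closes the argument.

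First I would reduce to an estimate on a cutoff quantity. Let $\phi^{(0)}$ be a smooth radial cutoff with $\phi^{(0)}\equiv 1$ on $|r|\leq 5/8$ and supported slightly outside. Since $\phi^{(0)}\equiv 1$ on the target region,
$$
\|\partial_r v\|_{L_{[0,T)}^\infty L_{|r|\leq 5/8}^2} \leq \|\partial_r(\phi^{(0)} v)\|_{L_{[0,T)}^\infty L_r^2} = \|D_r(\phi^{(0)} v)\|_{L_{[0,T)}^\infty L_r^2}.
$$

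Next I would choose nine nested smooth radial cutoffs $\phi^{(0)},\phi^{(1)},\ldots,\phi^{(8)}$, each supported in $\{|r|\leq 3/4\}$, with strictly increasing supports chosen so that $\phi^{(j+1)}\equiv 1$ on $\supp\phi^{(j)}$. For $j=0,1,\ldots,7$ I would apply Lemma \ref{L:low-freq-iterator} with $(\psi_1,\psi_2)=(\phi^{(j)},\phi^{(j+1)})$ and $s=1-j/8\in[1/8,1]$, obtaining
$$
\|D_r^{1-j/8}(\phi^{(j)} v)\|_{L_{[0,T)}^\infty L_r^2} \lesssim \|D_r^{1-(j+1)/8}(\phi^{(j+1)} v)\|_{L_{[0,T)}^\infty L_r^2} + \epsilon^5.
$$
Chaining these eight inequalities gives
$$
\|D_r(\phi^{(0)} v)\|_{L_{[0,T)}^\infty L_r^2} \lesssim \|\phi^{(8)} v\|_{L_{[0,T)}^\infty L_r^2} + \epsilon^5.
$$

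Finally, since $\phi^{(8)}$ is supported where $\psi_{3/4}\equiv 1$,
$$
\|\phi^{(8)} v\|_{L_{[0,T)}^\infty L_r^2} \lesssim \|\psi_{3/4} v\|_{L_{[0,T)}^\infty L_r^2} \leq \|\la D_r\ra^{3/7}(\psi_{3/4} v)\|_{L_{[0,T)}^\infty L_r^2} \lesssim \epsilon^5
$$
by Lemma \ref{L:a-little}, finishing the proof. The only obstacle is bookkeeping: arranging the nine cutoffs and their nested supports inside the narrow window $5/8<|r|<3/4$ so that at each stage the pair $(\phi^{(j)},\phi^{(j+1)})$ actually matches the structural template of Lemma \ref{L:low-freq-iterator} (where $\psi_1,\psi_2$ are tied to a specific pair of radii $r_1<r_2$). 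Since only eight steps are required and the strip has ample room, this is routine.
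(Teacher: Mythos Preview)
Your proposal is correct and follows essentially the same approach as the paper: the paper's proof sets $r_k = \tfrac58 + \tfrac{1}{64}(k-1)$ and applies Lemma \ref{L:low-freq-iterator} eight times on $[r_k,r_{k+1}]$, then closes with Lemma \ref{L:a-little}, exactly as you outline. Your write-up is in fact more explicit about the cutoff bookkeeping and the derivative count than the paper's two-line proof.
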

\begin{proof}
Let $r_k = \frac58+\frac{1}{64}(k-1)$.  Apply Lemma \ref{L:low-freq-iterator} on $[r_k,r_{k+1}]$ for $k=1, \ldots, 8$ to obtain collectively that
$$
\|\partial_r v \|_{L_{[0,T)}^\infty L_{|r|\leq \frac58}^2} \lesssim \epsilon^5 + \|v \|_{L_{|r|\leq \frac34}^2}\leq \epsilon^5
$$
by Lemma \ref{L:a-little}.
\end{proof}

\begin{proposition}[local smoothing control]
\label{P:ring-loc-smooth}
Suppose that the assumptions of Prop. \ref{P:bootstrap} and Remark \ref{R:bs-notation} hold.  Let $\psi_{9/16}$ be a smooth function such that $\psi_{9/16}(r)=1$ for $|r|\leq \frac9{16}$ and $\psi_{9/16}(r)=0$ for $|r|\geq \frac58$.
Then
$$
\| D_r^{3/2} (\psi_{9/16} v) \|_{L_{[0,T)}^2 L_r^2} \lesssim  \epsilon^5 \,.
$$
\end{proposition}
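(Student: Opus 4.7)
The estimate $\|D_r^{3/2}(\psi_{9/16}v)\|_{L^2_IL^2_r}\lesssim\epsilon^5$ represents a gain of one-half derivative over the $H^1$ control on $|r|\leq\tfrac58$ already established by Prop.~\ref{P:ring-energy}, traded for the $L^\infty_I\to L^2_I$ loss characteristic of the 1d Kato local smoothing effect. My plan is to set $w:=\psi_{9/16}v$, which is compactly supported in $|r|\leq\tfrac58$ and solves
\[
i\partial_t w+\partial_r^2 w=-r^{-4}\psi_{9/16}|v|^4v+2\partial_r(\psi_{9/16}'v)-\psi_{9/16}''v=:g_1+g_2+g_3,
\]
and to apply local smoothing to the solution $U:=D_rw$ of the Schr\"odinger equation with source $D_rg_1+D_rg_2+D_rg_3$. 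Choosing a smooth cutoff $\phi$ with $\phi\equiv 1$ on $|r|\leq\tfrac{11}{16}$ and $\supp\phi\subseteq[-\tfrac34,\tfrac34]$, I would split $D_r^{3/2}w=\phi D_r^{3/2}w+(1-\phi)D_r^{3/2}w$. Since $(1-\phi)w\equiv 0$, the second piece equals the commutator $[(1-\phi),D_r^{3/2}]w$, a pseudodifferential operator of order $\tfrac12$ applied to $w$; by Prop.~\ref{P:ring-energy}, Sobolev embedding, and $w(0)=0$, it is bounded pointwise in $t$ by $\|D_r^{1/2}w\|_{L^2_r}\lesssim\epsilon^5$, and the $L^2_I$ norm costs only $T^{1/2}\lesssim\epsilon^{100}$.

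For the main piece, combining H\"older in $r$ over $\supp\phi$ with the 1d Kato smoothing estimate
\[
\|D_r^{1/2}U\|_{L^\infty_rL^2_I}\lesssim\|U_0\|_{L^2}+\|F\|_{L^1_IL^2_r}+\|D_r^{-1/2}F\|_{L^2_IL^2_r}
\]
yields, after choosing the source-norm form optimized for each term,
\[
\|\phi D_r^{3/2}w\|_{L^2_IL^2_r}\lesssim\|\partial_rw_0\|_{L^2_r}+\|D_r^{1/2}g_1\|_{L^2_IL^2_r}+\|\partial_rg_2\|_{L^1_IL^2_r}+\|\partial_rg_3\|_{L^1_IL^2_r}.
\]
The initial bound $\|\partial_rw_0\|_{L^2}\lesssim\epsilon^5$ follows from Lemma~\ref{L:initial-data} with $\psi_{9/16}$ in place of $\chi_{|r|\leq 1/2}$. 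For $g_1$, distributing $D_r^{1/2}$ via the fractional Leibniz rule and returning to the 3d function $w_3:=\tilde\chi_1 u$ through $v=ru$, Hardy's inequality, and 3d Sobolev embedding reduces the estimate to $\|\nabla w_3\|_{L^5_IL^{30/11}_x}^5\lesssim\epsilon^{25}$ via the bootstrap input \eqref{BSI-3}, exactly as in Lemma~\ref{L:iterator-ring}. The lowest-order boundary term $g_3$ is controlled by $T\|v\|_{L^\infty_IH^1_{[9/16,5/8]}}\lesssim T\epsilon^5$ directly from Prop.~\ref{P:ring-energy}.

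The main obstacle will be the boundary term $\|\partial_rg_2\|_{L^1_IL^2_r}=2\|\partial_r^2(\psi_{9/16}'v)\|_{L^1_IL^2_r}$, whose expansion into $\psi_{9/16}'''v+2\psi_{9/16}''\partial_rv+\psi_{9/16}'\partial_r^2v$ contains the last summand that is not directly controlled by Prop.~\ref{P:ring-energy} alone. I would handle this by eliminating $\partial_r^2v$ using the PDE, writing $\partial_r^2v=-i\partial_tv-r^{-4}|v|^4v$; the nonlinear piece is absorbed into the $g_1$-analysis, and the time-derivative contribution $\|\psi_{9/16}'\partial_tv\|_{L^1_IL^2_r}=\|\partial_t(\psi_{9/16}'v)\|_{L^1_IL^2_r}$ is handled via integration by parts in $t$ against the $L^\infty_IL^2_r$ dual, or equivalently via the $X_{s,b}$-bounds of Lemma~\ref{L:ring-local}, reducing it to the already-controlled $L^\infty_IL^2_r$ and $L^\infty_IH^1_r$ quantities for the annular-supported $\psi_{9/16}'v$. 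The short time factor $T^{1/2}\lesssim\epsilon^{100}$ furnished by $T\leq\epsilon^{200}$ absorbs any residual loss, yielding the claimed bound.
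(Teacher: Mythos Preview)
Your argument has a genuine gap in the handling of the boundary term $g_2$. To run local smoothing at the $H^1$ level you need $\|\partial_r g_2\|_{L^1_IL^2_r}$, whose expansion contains $\|\psi_{9/16}'\,\partial_r^2 v\|_{L^1_IL^2_r}$. This is an $L^1_I H^2_r$ quantity on the annulus $\tfrac{9}{16}\le|r|\le\tfrac58$, and nothing established so far controls it: Prop.~\ref{P:ring-energy} gives only $H^1$ on $|r|\le\tfrac58$, and the Merle--Rapha\"el remainder bound \eqref{E:bs-error-bd} is likewise $H^1$. Your proposed cure---substituting $\partial_r^2 v=-i\partial_t v - r^{-4}|v|^4v$---is circular: integrating the resulting $\partial_t(\psi_{9/16}'v)$ by parts in the Duhamel formula returns an $e^{i(t-s)\partial_r^2}\partial_r^2(\psi_{9/16}'v)$ integrand, which reproduces the same $\psi_{9/16}'\partial_r^2 v$ term. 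The $X_{0,b}$ bounds of Lemma~\ref{L:ring-local} (with $b<\tfrac12$) do not control $\partial_t v$ in any norm strong enough to close the estimate, and the short-time factor $T^{1/2}$ cannot compensate since no polynomial-in-$(T-t)^{-1}$ bound on $\|\psi_{9/16}'\partial_r^2 v\|_{L^2_r}$ is available.

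The paper circumvents this obstruction by organizing the proof so that no such boundary term ever appears at the local-smoothing step. It first splits $D_r^{3/2}(\psi_{9/16}v)$ by the time-dependent projections $P_{\pm}$ onto frequencies below/above $(T-t)^{-3/4}$. The low-frequency piece is handled directly from Prop.~\ref{P:ring-energy} via the exchange $D_r^{1/2}\to (T-t)^{-3/8}$, which costs only $T^{1/8}$. For the high-frequency piece the spatial cutoff is moved \emph{outside} by the commutator estimate $[D_r^{3/2}P_+,\psi_{9/16}]\langle D_r\rangle^{-1/2}\colon L^2\to L^2$, reducing matters to $\|\psi_{9/16}D_r^{3/2}P_+v\|_{L^2_IL^2_r}$ with $P_+v$ uncut in space. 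Local smoothing is then generated by an energy method with a pseudodifferential multiplier $A$ of symbol $\exp(-(\operatorname{sgn}\xi)\tan^{-1}r)$, applied to $w=AP_+v$; the equation for $w$ has only the $Q\partial_r$ commutator term (controlled by Lemma~\ref{L:extra-high-freq}), the nonlinearity (split and estimated as in Lemma~\ref{L:iterator-ring}), and a zeroth-order remainder. No $\psi'\partial_r^2 v$ term arises because the spatial localization has already been stripped away. This reordering---commute out the cutoff, then do smoothing on the unlocalized high-frequency flow---is the missing idea in your proposal.
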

\begin{proof}
Let $\chi(\xi)=1$ for $|\xi|\leq 1$ and $\chi(\xi)= 0$ for $|\xi|\geq 2$ be a smooth function. Let $\chi_-=\chi$ and $\chi_+ = 1-\chi$.  Let $P_-$ be the Fourier multiplier with symbol $\chi_-((T-t)^{3/4}\xi)$ and $P_+$ be the Fourier multiplier with symbol $\chi_+((T-t)^{3/4}\xi)$.  Then $I = P_-+P_+$ for each $t$, and $P_-$ projects onto frequencies $\lesssim (T-t)^{-3/4}$ while $P_+$ projects onto frequencies $\gtrsim (T-t)^{-3/4}$.  Letting $Q$ be the Fourier multiplier with symbol $\frac34 \chi'((T-t)^{3/4}\xi)$, we have $\partial_t P_{\pm} f = \pm i(T-t)^{-1/4}  Q \partial_r f + P\partial_t f$.  Note that $Q$ has Fourier support in $|\xi|\sim (T-t)^{-3/4}$.

First, we can discard low frequencies.  From Prop. \ref{P:ring-energy} and \eqref{E:exchange} with $\alpha=\frac12$,
\begin{align*}
\| D_r^{3/2} P_- \psi_{9/16} v \|_{L_{[0,T)}^2 L_r^2}
&\lesssim \| (T-t)^{-3/8} \partial_r \psi_{9/16} v \|_{L_{[0,T)}^2L_r^2} \\
&\lesssim T^{1/8} \|\partial_r \psi_{9/16} v \|_{L_{[0,T)}^\infty L_r^2} \\
&\lesssim \epsilon^5.
\end{align*}
For the high-frequency portion, $D_r^{3/2} P_+ \psi_{9/16} v$, we first need to dispose of the spatial cutoff.  We have
$$
D_r^{3/2} P_+ \psi_{9/16} = \psi_{9/16} D_r^{3/2} P_+ + [D_r^{3/2} P_+, \psi_{9/16}].
$$
By the pseudodifferential calculus, the leading order term in the symbol of the commutator $[D_r^{3/2} P_+, \psi_{9/16}]$ is $\xi^{1/2}\chi_+(\xi(T-t)^{3/4}) \psi'(r) + \xi^{3/2}(T-t)^{3/4}\chi_+'(\xi(T-t)^{3/4}) \psi'(r)$.
Hence, we obtain the bound
$$
\| [D_r^{3/2} P_+, \psi_{9/16}] \la D_r\ra^{-1/2} \|_{L_r^2\to L_r^2} \lesssim 1 \,,
$$
independently of $t$. Thus, $\|[D_r^{3/2} P_+, \psi_{9/16}]v\|_{L_{[0,T)}^2L_r^2}$ is easily bounded by Prop. \ref{P:ring-energy}.

Consequently, it remains to show that $\|\psi_{9/16} D_r^{3/2}P_+ v\|_{L_{[0,T)}^2L_r^2} \lesssim \epsilon^5$, the estimate for the high-frequency portion with no spatial cutoff to the right of the frequency cut-off.  To obtain local smoothing via the energy method, we need to introduce the pseudodifferential operator $A$ of order $0$ with symbol $\exp(-(\sgn \xi)(\tan^{-1}r))$, where $\sgn \xi$ is a smoothed signum function.  Note that by the sharp G\"arding inequality, $A$ is positive.  The key property of $A$  is
$$
\partial_r^2 Af = A \partial_r^2 f - 2i(1+r^2)^{-1}D_r A f + Bf \,,
$$
where $B$ is an order $0$ pseudodifferential operator.  The first-order term $i(1+r^2)^{-1}D_r A f$ will generate the local smoothing estimate.

Let $w= AP_+ v$.  By the sharp G\"arding inequality,
$$
\| \psi_{9/16} D_r^{3/2} P_+ v \|_{L_{[0,T)}^2 L_r^2} \lesssim \| (1+r^2)^{-1/2} D_r^{3/2} w \|_{L_{[0,T)}^2L_r^2} \,
$$
and it suffices to prove that $\| (1+r^2)^{-1/2} D_r^{3/2} w \|_{L_{[0,T)}^2L_r^2} \lesssim \epsilon^5$.  The equation satisfied by $w$ is
\begin{align*}
i\partial_t w + \partial_r^2 w + 2i(1+r^2)^{-1} D_r w
&= (T-t)^{-1/4}AQ\partial_r v -AP_+r^{-4}|v|^4v  + Bv \, \\
&= F_1+F_2+F_3 \,,
\end{align*}
where $B$ is a order $0$ operator (satisfying bounds independent of $t$).  By applying $\partial_r$ and pairing this equation with $\partial_r w$ (energy method), we obtain, upon time integration,
\begin{align*}
\indentalign \| \partial_r w\|_{L_{[0,T)}^\infty L_r^2}^2 + \|(1+r^2)^{-1/2}D_r^{3/2} w \|_{L_{[0,T)}^2L_r^2}^2 \\
&\lesssim \int_0^T |\la \partial_r F_1, w \ra| + 10 \|\partial_r F_2\|_{L_{[0,T)}^1L_r^2}^2 + 10 \| \partial_r F_3 \|_{L_{[0,T)}^1L_r^2}^2.
\end{align*}
The $F_3$ term is easily controlled using Prop. \ref{P:ring-energy}.

The $F_1$ term is controlled as in the proof of Lemma \ref{L:low-freq-iterator} (a similar first term).  For the $F_2$ term, let $\psi$ be a smooth function such that $ \psi(r) =1$ for $|r|\leq \frac14$ and $\psi(r)=0$ for $|r|\leq \frac12$.  Writing $1=\psi^5 + (1-\psi^5)$, we have
\begin{align*}
F_2 &= AP_+ \psi^5 r^{-4} |v|^4v + AP_+ (1-\psi^5) r^{-4} |v|^4v \\
&= F_{21}+F_{22}.
\end{align*}
We estimate $\| \partial_r F_{21} \|_{L_{[0,T)}^1L_r^2}$ as we did in the proof of Lemma \ref{L:iterator-ring}. For the term $F_{22}$, take $\psi_+ = (1-\psi^5)r^{-4}$, and note that $\psi_+$ is smooth and well-localized.  Recall that in the proof of Lemma \ref{L:iterator-ring} (see \eqref{E:step-90} and \eqref{E:star-2}), we showed that
$$
\|P_{\geq N} \partial_r \psi_+ |v|^4v \|_{L_{I_k}^1 L_r^2} \lesssim 2^{k(1+\delta)/2}N^\delta \beta(k,\mu N) + N^{-1+\delta} 2^{k\delta} \beta(k,\mu N)^2 + N^{-1} 2^{k(1+\delta)/2} \,.
$$
Furthermore, Prop. \ref{P:high-freq-ring} showed that $\beta (k,2^{3k/4}) \lesssim 1$.  Combining the above, gives
$$
\| P_{\geq 2^{3k/4}} \partial_r \psi_+ |v|^4v \|_{L_{I_k}^1L_r^2} \lesssim 2^{-k/8} \,.
$$
Thus,
\begin{align*}
\| \partial_r F_{22} \|_{L_{[0,T)}^1L_r^2}
&\lesssim \sum_{k=k_0}^\infty \| P_{\geq 2^{3k/4}}\partial_r \psi_+ |v|^4v \|_{L_{I_k}^1L_r^2} \\
&\lesssim \sum_{k=k_0}^\infty \|P_{\geq 2^{3k/4}} \partial_r \psi_+ |v|^4v \|_{L_{I_k}^1L_r^2} \\
&\lesssim 2^{-k_0/8} \\
&\lesssim \epsilon^5 .
\end{align*}
\end{proof}

\begin{proposition}[Strichartz control]
\label{P:ring-Strichartz}
Suppose that the assumptions of Prop. \ref{P:bootstrap} and Remark \ref{R:bs-notation} hold.  Then
$$
\|r^{\frac2p-1} \partial_r v \|_{L_{[0,T)}^qL_{|r|\leq \frac12}^p}  \lesssim  \epsilon^5 \,.
$$
\end{proposition}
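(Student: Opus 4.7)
The approach is to apply the $3d$ radial Strichartz estimate (Lemma~\ref{L:radial-Strichartz}) to the spatially localized function $w := \psi u$, where $\psi$ is a smooth radial cutoff equal to $1$ on $|r|\leq \tfrac12$ and supported in $|r|\leq \tfrac{9}{16}$. Since $\nabla w = \nabla u$ on $|r|\leq \tfrac12$, Lemma~\ref{L:conversion} reduces the proposition to proving $\|\nabla w\|_{L^q_{[0,T)}L^p_x}\lesssim \epsilon^5$ for some (hence every) $3d$-admissible pair $(q,p)$. The function $w$ solves
\[
i\partial_t w + \Delta w \;=\; -\psi|u|^4 u \,+\, 2(\nabla\psi)\cdot\nabla u \,+\, (\Delta\psi) u \;=:\; G_1+G_2+G_3,
\]
and Strichartz yields $\|\nabla w\|_{L^q L^p_x} \lesssim \|\nabla w_0\|_{L^2_x} + \sum_{j=1}^3\|\nabla G_j\|_{\text{dual Strichartz}}$. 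The initial-data term is $\lesssim \epsilon^5$ by Lemma~\ref{L:initial-data}.

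The commutator pieces $G_2$ and $G_3$ are supported in the annulus $\{|r|\in[\tfrac12,\tfrac{9}{16}]\}$, where $r\sim 1$ and $1d$/$3d$ norms are interchangeable. I would bound them by combining Prop.~\ref{P:ring-energy} ($L^\infty_t H^1_r$ control of $\psi_{9/16} v$) with Prop.~\ref{P:ring-loc-smooth} ($L^2_t H^{3/2}_r$ control): the $1d$ Sobolev embedding $H^{3/2}_r\hookrightarrow W^{1,\tilde p}_r$ together with interpolation between these two norms provides enough integrability in both space and time to bound $\|\nabla G_j\|$ in the dual Strichartz norm by $\lesssim \epsilon^5$.

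The main obstacle is the nonlinear term $G_1=\psi|u|^4 u$. Following the device used in the proof of Lemma~\ref{L:iterator-ring}, I would write $\psi|u|^4 u = \psi|\hat u|^4\hat u$ with $\hat u = \hat\psi u$, where $\hat\psi$ equals $1$ on $\supp\psi$ and is supported in $|r|\leq \tfrac58$. The fractional Leibniz rule combined with the $3d$ Sobolev embedding $\dot{W}^{1,30/11}_x\hookrightarrow L^{30}_x$ yields the standard energy-critical estimate
\[
\|\nabla(\psi|\hat u|^4\hat u)\|_{L^{5/4}_t L^{30/19}_x} \;\lesssim\; \|\nabla\hat u\|^5_{L^5_t L^{30/11}_x}.
\]
To control the right-hand side, split $\hat u = \psi_{1/2}u + (\hat\psi - \psi_{1/2})u$. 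The first piece contributes $\lesssim \epsilon$ directly from the bootstrap hypothesis \eqref{BSI-3}, while the annular piece $(\hat\psi-\psi_{1/2})u$ lives where $r\sim 1$ and is $\lesssim \epsilon^5$ by Prop.~\ref{P:ring-energy} and Prop.~\ref{P:ring-loc-smooth}, using the time-smallness $T\leq \epsilon^{200}$ to convert $L^2_t$-based local-smoothing bounds into $L^5_t$ ones by H\"older. Raising the sum to the fifth power gives $\|\nabla\hat u\|^5_{L^5_t L^{30/11}_x}\lesssim\epsilon^5$, closing the argument. The real difficulty is that $\supp \hat u$ extends past the bootstrap region $|r|\leq \tfrac12$; this is precisely the gap that Prop.~\ref{P:ring-energy} and Prop.~\ref{P:ring-loc-smooth} were proved to bridge.
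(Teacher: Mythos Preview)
Your overall strategy---localize with a cutoff $\psi$ supported in $|r|\leq\tfrac{9}{16}$, apply Strichartz, and split the nonlinearity into an inner piece controlled by the bootstrap hypothesis \eqref{BSI-3} and an annular piece controlled by Prop.~\ref{P:ring-energy}---matches the paper's proof exactly (the paper works with $w=\psi v$ in $1d$ rather than $w=\psi u$ in $3d$, but this is cosmetic). Your treatment of $G_1$ and $G_3$ is essentially correct; the minor slip that H\"older converts $L^2_t$ into $L^5_t$ is harmless since interpolation between $L^\infty_t H^1_r$ and $L^2_t H^{3/2}_r$ does the job.

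The genuine gap is in your handling of $G_2 = 2(\nabla\psi)\cdot\nabla u$. You propose to place $\nabla G_2$ in a dual Strichartz norm using $H^{3/2}_r\hookrightarrow W^{1,\tilde p}_r$ and interpolation. But $\nabla G_2$ carries \emph{two} derivatives of $u$ (equivalently $\partial_r^2 v$ on the annulus), whereas Prop.~\ref{P:ring-loc-smooth} supplies only $3/2$ derivatives in $L^2_t$, and Prop.~\ref{P:ring-energy} only one derivative in $L^\infty_t$. No interpolation between these reaches $H^2_r$, and no $3d$ dual Strichartz space $L^{\tilde q'}_t L^{\tilde p'}_x$ (including the endpoint $L^2_t L^{6/5}_x$) can absorb $\partial_r^2 v$ from $H^{3/2}_r$ control alone. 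The paper closes this half-derivative gap by invoking the \emph{dual local smoothing} (Kato) estimate rather than dual Strichartz for this term: writing $F_2 = 2\partial_r(\psi' v)$ in $1d$, one bounds
\[
\| D_r^{-1/2}\partial_r F_2 \|_{L^2_{[0,T)} L^2_r} \;\lesssim\; \| \langle D_r\rangle^{3/2}(\psi_{9/16} v)\|_{L^2_{[0,T)} L^2_r} \;\lesssim\; \epsilon^5,
\]
which is exactly Prop.~\ref{P:ring-loc-smooth}. This is in fact the sole purpose of Prop.~\ref{P:ring-loc-smooth} in the argument: to feed the dual local smoothing estimate for the commutator term. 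Once you replace ``dual Strichartz'' by ``dual local smoothing'' for $G_2$, your proof goes through and coincides with the paper's.
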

\begin{proof}
Let $\psi$ be a smooth function such that $\psi(r)=1$ for $|r|\leq \frac12$ and $\psi(r)=0$ for $|r|\geq \frac9{16}$.  Let $w=\psi v$.  Then $w$ solves
\begin{align*}
i\partial_t w + \partial_r^2 w
&= -\psi r^{-4}|v|^4v +2\partial_r (\psi' v) - \psi'' v\\
&= F_1+F_2+F_3.
\end{align*}
By the Strichartz estimate and dual local smoothing estimate, we obtain
$$
\| r^{\frac2{p}-1}\partial_r w \|_{L_{[0,T)}^q L_r^p} \lesssim \| \partial_r w_0 \|_{L_r^2} + \|\partial_r F_1 \|_{L_{[0,T)}^1L_r^2} + \|D_r^{-1/2} \partial_r F_2 \|_{L_{[0,T)}^2L_r^2} + \|\partial_r F_3 \|_{L_{[0,T)}^1L_r^2} \,.
$$
Let $\tilde \psi$ be a smooth function such that $\tilde \psi(r)=1$ for $|r|\leq \frac14$ and $\tilde \psi(r)=0$ for $|r|\geq \frac12$.  By writing $1=\tilde \psi^5 + (1-\tilde \psi^5)$, we have
$$
F_1 = -\psi\tilde \psi^5 r^{-4} |v|^4v -\psi(1-\tilde \psi^5) r^{-4} |v|^4v = F_{11}+F_{12}.
$$
Since the support of $\psi\tilde \psi^5$ is contained in $|r|\leq \frac12$, the term $\| \partial_r F_{11}\|_{L_{[0,T)}^1L_r^2}$ can be estimated by $\epsilon^5$ using bootstrap assumption \eqref{BSI-3} as in the proof of Lemma \ref{L:iterator-ring}.  Since $(1-\tilde \psi^5)\psi r^{-4}$ is a bounded and smooth function,
$$
\| \partial_r F_{12} \|_{L_{[0,T)}^1L_r^2} \lesssim \|\la \partial_r \ra v^5 \|_{L_{[0,T)}^1L_{|r|\leq \frac58}^2} \lesssim T\| \la \partial_r \ra v \|_{L_{[0,T)}^\infty L_{|r|\leq \frac58}^2}^5 \lesssim  \epsilon^5 \,.
$$
Also, by Prop. \ref{P:ring-loc-smooth},
$$
\|D_r^{1/2} F_2 \|_{L_{[0,T)}^2L_r^2} \lesssim \| \la D_r\ra ^{3/2} \psi_{9/16} v \|_{L_{[0,T)}^2L_r^2} \lesssim \epsilon^5 \,.
$$
And finally,
$$
\|\partial_r F_3 \|_{L_{[0,T)}^1L_r^2} \lesssim T \| \la \partial_r \ra v\|_{L_{[0,T)}^\infty L_{|r|\leq \frac58}^2} \lesssim \epsilon^5
$$
by Prop. \ref{P:ring-energy}.  Collecting the above estimates, we obtain the claimed bound.
\end{proof}

This completes the proof of Prop. \ref{P:bootstrap} (via Lemma \ref{L:conversion}).

\end{document}